\documentclass[12pt, A4paper]{article}
\parindent=1cm
\usepackage{amsmath,amssymb,amscd,amsthm}
\usepackage[matrix,arrow,curve]{xy}
\hoffset -1.1cm
\voffset -1.5cm
\textwidth=16.2cm
\textheight=23.5cm
\CompileMatrices

\newtheorem{theorem}{Theorem}[section]
\newtheorem{predl}[theorem]{Proposition}
\newtheorem{lemma}[theorem]{Lemma}

\newtheorem{definition}[theorem]{Definition}

\newcommand{\dash}{~---\ }
\newcommand{\C}{\mathbb C}

\newcommand{\Z}{\mathbb Z}
\newcommand{\G}{\mathbb G}
\renewcommand{\P}{\mathbb P}
\renewcommand{\AA}{\mathcal A}
\newcommand{\BB}{\mathcal B}
\renewcommand{\SS}{\mathcal S}
\newcommand{\D}{\mathcal D}
\newcommand{\EE}{\mathcal E}
\newcommand{\FF}{\mathcal F}
\newcommand{\GG}{\mathcal G}

\newcommand{\LL}{\mathcal L}
\newcommand{\TT}{\mathcal T}
\renewcommand{\O}{\mathcal O}

\newcommand{\Spec}{\mathbf{Spec}}
\renewcommand{\k}{\mathsf k}

\newcommand{\ra}{\mathbin{\rightarrow}}

\newcommand{\xra}{\xrightarrow}

\renewcommand{\le}{\leqslant}
\renewcommand{\ge}{\geqslant}
\renewcommand{\~}{\tilde }

\DeclareMathOperator{\Hom}{\textup{Hom}}
\DeclareMathOperator{\HHom}{\mathcal{H}\it{om}}
\DeclareMathOperator{\Ext}{\textup{Ext}}
\DeclareMathOperator{\Gr}{\mathrm{Gr}}
\DeclareMathOperator{\Ind}{\mathrm{Coind}}
\DeclareMathOperator{\Res}{\mathrm{Res}}
\DeclareMathOperator{\Pic}{\mathrm{Pic}}
\DeclareMathOperator{\End}{\mathrm{End}}
\def\a{\alpha}
\def\b{\beta}
\def\g{\gamma}

\def\s{\sigma}
\def\lam{\lambda}
\def\Si{\Sigma}
  
\binoppenalty=8000
\relpenalty=8000

\begin{document}

\author{A.\,Elagin}
\title{Semiorthogonal decompositions of derived categories 
of equivariant coherent sheaves
\thanks{
This work was partially supported by a CRDF grant RUM1-2661-MO-05.}
}
\date{}
\maketitle

\begin{abstract}                                                      
 Let $X$ be an algebraic variety with an action of an algebraic group~$G$.
Suppose $X$ has a full exceptional collection of sheaves, and 
these sheaves are invariant under the action of the group.
We construct a semiorthogonal decomposition of bounded derived category
of $G$-equivariant coherent sheaves on $X$ into components,
equivalent to derived categories of twisted representations of the group.
If the group is finite or reductive over the algebraically closed field
of zero characteristic, this gives a full exceptional collection in the
derived equivariant category. We apply our results to particular varieties
such    as projective spaces, quadrics, Grassmanians and Del Pezzo surfaces.
\end{abstract}

\section*{Introduction.}
Let $X$ be an algebraic variety over the field $\k$ with an action of
an algebraic group $G$.
In this paper we investigate $\D^b(coh^G(X))$\dash 
the derived category of $G$-equivariant
coherent sheaves on~$X$.
We prove that under some conditions the category 
$\D^b(coh^G(X))$ admits a semiorthogonal decomposition into
subcategories, equivalent to derived categories of twisted representations
of the group.
If the group has a semisimple category of representations, 
this decomposition gives a full exceptional collection, consisting of blocks.

The most simple variant of our result is the following statement 
(theorem~\ref{th1a}).
Suppose that the derived category $\D^b(coh(X))$ has a full
exceptional collection of equivariant sheaves 
$(\EE_1,\dots,\EE_n)$. Then the category $\D^b(coh^G(X))$ admits
a semiorthogonal decomposition
$$\langle\EE_1\otimes\D^b(Repr(G)),\dots,\EE_n\otimes\D^b(Repr(G))\rangle,$$
where subcategories $\EE_i\otimes\D^b(Repr(G))$ are equivalent to
derived categories of representations of the group $G$. 
As a corollary (theorem~\ref{th1}), we deduce that 
if the group $G$ is finite, $char(\k)$ is coprime with $|G|$ and
 $\k=\bar \k$, then $\D^b(coh^G(X))$ has a full exceptional collection
of sheaves
$$
\left(
\begin{array}{clclc}
\EE_1\otimes V_1 && \EE_2\otimes V_1 && \EE_n\otimes V_1 \\
\vdots & , & \vdots & ,   \dots , & \vdots  \\
\EE_1\otimes V_m && \EE_2\otimes V_m && \EE_n\otimes V_m
\end{array}
\right),
$$
where $V_1,\dots,V_m$ denote all irreducible representations of $G$ over $\k$.

These results can be viewed as a variant of the below theorem
(\cite[theorem 3.1]{Sa}).
Let $X\xra{p} S$ be a flat proper morphism of smooth schemes.
Suppose there are sheaves 
$E_1,\dots,E_n$ on $X$, such that for any fiber $X_s$ the collection 
$(E_1|_{X_s},\dots,E_n|_{X_s})$ is exceptional and full. Then we have 
a semiorthogonal decomposition 
$$\D^b(X)=\langle E_1\otimes p^*\D^b(S),\dots,E_n\otimes p^*\D^b(S)\rangle.$$
In our case we have a morphism of stacks $X/\!\!/G\ra pt/\!\!/G$. 
Here the base is a quotient stack $pt/\!\!/G$, sheaves on this stack are
representations of the group $G$, and sheaves on the total space 
$X/\!\!/G$ are exactly equivariant sheaves on~$X$. Consider a cartesian
square 
$$\xymatrix{
X \ar[r] \ar[d] & X/\!\!/G \ar[d]\\
pt \ar[r] & pt/\!\!/G.
}$$
The morphism $X\ra X/\!\!/G$ can be viewed as an embedding of the
fiber over the unique closed point  $pt\ra pt/\!\!/G$ on the base.
The roles of $E_i$'s are played by equivariant sheaves that form a
full exceptional collection after forgetting of the group action, i.e.
after restriction to a fiber.

An analogue of theorem~\ref{th1a} 
holds under weaker assumptions on the exceptional
collection $(\EE_1,\dots,\EE_n)$. In fact, sheaves $\EE_i$ need to be
just invariant, not necessarily equivariant.
To treat the case of invariant sheaves, one has to consider
not representations but twisted representations.
Our usage of twisted sheaves is parallel to that of 
M.\,Bernardara. In~\cite{Be} he applied twisted sheaves 
to describe semiorthogonal decompositions
for relative  Brauer Severi schemes, see section~\ref{proj}.
We introduce a notion of 
"cocycle" on an algebraic group. This notion of cocycle 
is
related with classification of central extensions of a group by $\G_m$
and generalizes
$2$-cocycles of abstract groups with coefficients in $\k^*$.
Our main result
(theorem~\ref{th3a}) gives a semiorthogonal decomposition
of the category $\D^b(coh^G(X))$ under the following conditions:
the category $\D^b(coh(X))$ has a full exceptional collection,
consisting of blocks of sheaves, such that the action of $G$
permutes sheaves inside each block.

In the second part of the paper we apply the developed theory
to specific varieties\dash projective spaces, quadrics, 
Grassmann varieties and Del Pezzo surfaces of degree $d\ge 5$.
        
The paper consists of three sections. 
In the first section we introduce necessary definitions and notations, 
and develop theory of cocycles on groups, twisted representations and twisted
equivariant sheaves. The main theorems are in the second section. 
We found it reasonable to consider a 
special case of finite groups
before the general case. The proof of theorem~\ref{th1} for
finite groups 
allows to demonstrate all necessary ideas but is more straightforward
then the proof in the general context of algebraic groups.
It the third section we apply the theory to particular varieties.

Author thanks D.\,Orlov and A.\,Kuznetsov for useful and 
stimulating discussions and constant attention to the work.

\section{Preliminaries: cocycles, twisted representations and
twisted sheaves.}
\label{prelim}

We will work under the following agreements.
A variety will mean a smooth algebraic variety over an arbitrary field~$\k$,
a sheaf on the variety $X$ will mean a coherent sheaf of 
$\O_X$-modules. A group will denote 
an algebraic group over $\k$ except special cases
(for example, in section~\ref{fingrp} groups will be finite).
All derived categories we consider are bounded, say 
$\D(X)$ will be used for $\D^b(coh(X))$.
Below we introduce and discuss necessary notions and their basic properties. 

\paragraph{Equivariant sheaves.}
Suppose $X$ is an algebraic variety with the action of a finite 
group $G$.
An \emph{equivariant G-sheaf} on $X$ (or simply a {$G$-sheaf})
is a sheaf $F$ on $X$ together with isomorphisms
$\theta_g\colon  F\ra g^*F$ for any  $g\in G$ such that
the diagram 
$$\xymatrix{
F\ar[r]^-{\theta_h}\ar@/_/[rrd]_{\theta_{gh}} &
h^*F\ar[r]^-{h^*\theta_g}  &
h^*g^*F\ar@{=}[d] \\
&&(gh)^*F
}$$
is commutative for any pair $g,h\in G$.
A \emph{morphism} of equivariant sheaves 
is a morphism $f\colon F_1\ra F_2$ in the category of sheaves,
compatible with isomorphisms $\theta_1$ and $\theta_2$, i.\,e.\, 
such that 
$\theta_{2,g}\circ f=g^*f\circ \theta_{1,g}$ for all $g\in G$.

In the case of a variety $X$ with an action of an algebraic group $G$
the definitions are slightly different.    
Let $\mu\colon
G\times G\ra G$ be the multiplication in $G$ and 
$a\colon G\times X\ra X$ be the action morphism.
We denote projection of direct products $G\times G, G\times X$ or 
$G\times G\times X$
onto $i$-th factor by $p_i$ and projection of $G\times G\times X$
or $G\times G\times G$  onto the product of first two (or last two) factors
by $p_{12}$ (or $p_{23}$) respectively.
By definition,
a \emph{$G$-sheaf} on $X$ is a sheaf $F$ on $X$ together
with an isomorphism $\theta\colon p_2^* F\ra a^* F$ of sheaves on $G\times X$,
satisfying the associativity condition:
the diagram 
$$\xymatrix{
p_3^*F\ar[r]^-{p_{23}^*\theta}\ar@/_/[rrrd]_{(\mu\times Id)^*\theta} &
(ap_{23})^*F\ar[rr]^-{(Id\times a)^*\theta} & &
(a(Id\times a))^*F\ar@{=}[d] \\
&&&(a(\mu\times Id))^*F
}$$
of sheaves on $G\times G\times X$ is commutative.

Morphisms in the category of $G$-sheaves on $X$ are defined as
morphisms in the category of sheaves compatible with $\theta$-s.
                                                                 
A group of morphisms in the category of $G$-sheaves between 
$\FF$ and $\GG$ will be denoted by $\Hom_G(\FF,\GG)$. 
Note that there is a natural action of $G$ on the 
space $\Hom(\FF,\GG)$ 
of morphisms in the usual category of sheaves, and we have
$\Hom_G(\FF,\GG)=(\Hom(\FF,\GG))^G$.

Equivariant coherent $G$-sheaves on $X$ form an abelian category, 
which is denoted as $coh^G(X)$. We will write $\D^G(X)$ or $\D^b(coh^G(X))$
for the bounded derives category of $coh^G(X)$.

In the case when $X$ is a point and $F$ is a vector space, 
the above definition gives a notion
of an (algebraic) representation of the group $G$. Of course,
this definition is equivalent to the standard one: a representation
of $G$ in the space $V$ is a homomorphism $G\ra GL(V)$. 
The category of finite dimensional representations of $G$ 
will be denoted as $Repr(G)$.

\paragraph{Cocycles, twisted representations and sheaves: case of finite groups}

Suppose $G$ is a finite group, $\k$ is a field, and
$\a$ is a 2-cocycle of $G$ with coefficients in $\k^*$. 
Basic definitions and facts on cohomologies of groups can be found, 
for instance,
in~\cite{Br}.

Define an \emph{$\a$-representation} of the group $G$ in the vector space
$V$ over $\k$ as a map $R\colon G\ra GL(V)$ such that $R(g)R(h)=\a(g,h)R(gh)$
for any $g,h\in G$. 
Define a \emph{twisted group algebra(?)} as follows. Let 
$\k_{\a}[G]$ be equal to $\oplus_{g\in G}\k\cdot g$
as a vector space, and define multiplication on the basis by
$g\cdot  h=\a(g,h)(gh)$.

Evidently, the categories of $\a$-representations of the group $G$ over $\k$
and of representations of the algebra $\k_{\a}[G]$ over $\k$ are equivalent,
we will denote both by $Repr(G,\a)$.

%\begin{predl}
%Пусть $\a$ и $\b$\dash 2-коциклы группы $G$,
% $R\colon G\ra GL(V)$  и $S\colon G\ra GL(U)$\dash $\a$- и
%$\b$-представления $G$ соответственно. Тогда
%\begin{itemize}
%\item $R^*\colon G\ra GL(V^*)$ является $\a^{-1}$-представлением $G$,
%\item $R\otimes S\colon G\ra GL(V\otimes U)$
%является $\a\b$-представлением $G$,
%\item $\Hom(R,S)\colon G\ra GL(\Hom(V,U))$ является
%$\a^{-1}\b$-представлением $G$.
%\end{itemize}
%\end{predl}

\begin{predl}
\label{twistedalg}
Algebra $\k_{\a}[G]$ is associative and possesses a unity element. 
It is semisimple if $char(\k)$ doesn't divide the order of $G$.            
Up to an isomorphism, the algebra $\k_{\a}[G]$ depends only of the class
$[\a]\in H^2(G,\k^*)$ in the 2nd cohomologies of $G$.
\end{predl}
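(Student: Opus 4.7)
The plan is to treat the four assertions in turn, since three of them reduce to direct (if slightly bookkeeping-heavy) computations, and only semisimplicity requires a genuine idea.

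\emph{Associativity and unit.} First I would compute $(g\cdot h)\cdot k$ and $g\cdot(h\cdot k)$ directly from the definition: both equal a scalar multiple of $ghk$, and the two scalars agree precisely because of the $2$-cocycle identity $\a(g,h)\a(gh,k)=\a(h,k)\a(g,hk)$. For the unit, I would specialise the cocycle identity to $(1,1,k)$ and $(g,1,1)$; this forces $\a(1,k)=\a(1,1)=\a(g,1)$ for every $g,k\in G$. Consequently $e:=\a(1,1)^{-1}\cdot 1$ is a two-sided identity, as one checks by a one-line computation.

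\emph{Dependence on cohomology class.} Suppose $\a'=\a\cdot\delta\b$ for some $1$-cochain $\b\colon G\to\k^*$, i.e.\ $\a'(g,h)=\a(g,h)\,\b(gh)\b(g)^{-1}\b(h)^{-1}$. I would propose the $\k$-linear map $\varphi\colon\k_{\a}[G]\to\k_{\a'}[G]$ sending the basis element $g$ to $\b(g)\cdot g'$ (where $g'$ is the corresponding basis element of the second algebra). A short check shows $\varphi(g)\cdot\varphi(h)=\varphi(g\cdot h)$, so $\varphi$ is an algebra homomorphism; being a rescaling of a basis, it is clearly bijective.

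\emph{Semisimplicity via twisted Maschke.} This is the step requiring an actual idea, and is the main obstacle, because the usual averaging uses $R(g^{-1})=R(g)^{-1}$, which fails for $\a$-representations. Given any $\a$-representation $R\colon G\to GL(V)$ and any $\a$-subrepresentation $W\subset V$, pick a $\k$-linear projector $p\colon V\to W$ and set
$$\tilde p:=\frac1{|G|}\sum_{g\in G}R(g)^{-1}\,p\,R(g).$$
The key calculation is that, although each individual $R(g)^{-1}$ is only a scalar multiple of $R(g^{-1})$, the identity $R(g)R(h)=\a(g,h)R(gh)$ gives $R(h)^{-1}R(g)^{-1}=\a(g,h)^{-1}R(gh)^{-1}$, so the offending scalars in
$$R(h)^{-1}\,\tilde p\, R(h)=\frac1{|G|}\sum_g\a(g,h)^{-1}R(gh)^{-1}\,p\,\a(g,h)R(gh)$$
cancel, and a reindexing $k=gh$ shows $R(h)^{-1}\tilde pR(h)=\tilde p$. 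Since $W$ is $\a$-invariant (and hence preserved by each $R(g)^{-1}$), the averaged map $\tilde p$ is still a projector onto $W$, and it is now an $\a$-equivariant map, so $V=W\oplus\ker\tilde p$ as $\a$-representations. The hypothesis $\mathrm{char}(\k)\nmid|G|$ is used exactly to make the factor $1/|G|$ meaningful.
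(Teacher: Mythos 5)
Your proposal is correct and follows essentially the same route as the paper: direct verification of associativity and the unit from the cocycle identity, an explicit rescaling of the basis for the dependence on the cohomology class, and a twisted Maschke averaging argument for semisimplicity. The only cosmetic difference is that you average $R(g)^{-1}pR(g)$ while the paper averages $R(g)pR(g)^{-1}$; the scalar cancellation works identically in both directions.
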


\begin{proof}
The element $e/\a(e,e)$ is an identity. The associativity follows
directly from the cocycle condition. 
To show that $\k_{\a}[G]$ is semisimple it suffices to check that
any invariant subspace 
$U\subset V$ in an $\a$-representation $R \colon G \ra GL(V)$
has a complementary invariant subspace.
Choose any projector  $p\colon V \ra U$. Let 
$$p'=\frac 1{|G|}\sum_{g\in G}R(g)pR(g)^{-1}.$$
For all $h \in G$
we have
\begin{multline*}
R(h)p'R(h)^{-1}=\frac 1{|G|}\sum_{g\in G}R(h)R(g)pR(g)^{-1}R(h)^{-1}=\\
=\frac 1{|G|}\sum_{g\in G}\a(h,g)R(hg)pR(hg)^{-1}\a(h,g)^{-1}=p',
\end{multline*}
so $p'$ is an equivariant projector onto $U$.
Now one can take $U^{\perp}=ker\, p'$ as a required complementary subspace.

Suppose  a cocycle $\a\b$ is obtained from $\a$ by multiplication by a cochain
 $\b=\partial\g$, where 
$\b(g,h)=\g(g)\g(h)\g(gh)^{-1}$. Then an algebra isomorphism $\k_{\a}[G]\ra
\k_{\a\b}[G]$ can be given by the mapping $g\mapsto g/\g(g)$.
\end{proof}

Let $X$ be an algebraic variety over $\k$, let $G$ be a finite group
acting on $X$, let $\a$ be a 2-cocycle of $G$ with coefficients in $\k^*$.

By definition, an \emph{$\a$-$G$-equivariant sheaf} on $X$ is a coherent
sheaf $F$ together with isomorphisms $\theta_g\colon  F\ra g^*F$ for
all $g\in G$ such that $\a(g,h)\theta_{gh}=h^*(\theta_g)\circ \theta_h$ for
any pair $g,h\in G$. In the  case of trivial cocycle $\a(g,h)=1$
this gives us a common definition of a $G$-sheaf. We will often call
$\a$-$G$-equivariant sheaves simply \emph{$\a$-sheaves}.
Some properties of $\a$-$G$-sheaves are presented in the following 
proposition.

\begin{predl}
\label{asheves}
Suppose $G$ is a finite group acting on a variety $X$, and
$\a$ is a cocycle of the group $G$. Then 
 $\a$-$G$-sheaves on $X$ form an abelian category.
Let $\a$ and $\b$ be 2-cocycles of $G$, let $\FF$ and  $\GG$ be $\a$- and
$\b$-sheaves on $X$, let $U$ and $V$ be $\a$ and $\b$-representations of 
the group $G$. Then:
\begin{itemize}
\item $U\otimes V$ is an $\a\b$-representation of the group  $G$,
\item $U^*$ is an $\a^{-1}$-representation,  
\item $\Hom(U,V)$ is an $\a^{-1}\b$-representation,
\item $\FF\otimes\GG$
is an $\a\b$-sheaf on $X$,
\item $\FF^*$ is an $\a^{-1}$-sheaf,
\item $\HHom(\FF,\GG)$ is an 
$\a^{-1}\b$-sheaf,
\item $\mathcal O_X\otimes V$
is a $\b$-sheaf,
\item $\FF\otimes V$
is an $\a\b$-sheaf,
\item there is a canonical $\a$-representation of $G$ in the space
 $\Gamma(X,\FF)$,
\item there is a canonical $\a^{-1}\b$-representation of $G$ in the 
space $\Hom(\FF,\GG)$.
\end{itemize}
\end{predl}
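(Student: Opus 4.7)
Each assertion will be verified by producing an explicit candidate for the twisted structure and checking the relevant cocycle identity; the arguments all follow a common template. For the abelian category claim, observe that for an automorphism $g$ of $X$ the functor $g^*$ is an exact autoequivalence of $coh(X)$, so kernels, cokernels, and images of morphisms of $\a$-sheaves inherit $\a$-equivariant structures from source and target, and the axioms reduce to those of $coh(X)$.

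For the representation-theoretic bullets the constructions are componentwise. Setting $R_{U\otimes V}(g) = R_U(g)\otimes R_V(g)$ and multiplying the cocycle identities for $U$ and $V$ yields the $\a\b$-identity. For $U^*$ one takes $R_{U^*}(g) = (R_U(g^{-1}))^t$ up to a normalization by a $1$-cochain such as $\g(g) = \a(g^{-1},g)$; a direct computation shows the resulting cocycle is cohomologous to $\a^{-1}$, so by proposition~\ref{twistedalg} the rescaled operators produce an honest $\a^{-1}$-representation. For $\Hom(U,V)$ the cleanest argument is direct: $G$ acts by $\phi\mapsto R_V(g)\phi R_U(g)^{-1}$, and
$$R_V(g)R_V(h)\phi R_U(h)^{-1}R_U(g)^{-1} = \b(g,h)\a(g,h)^{-1} R_V(gh)\phi R_U(gh)^{-1},$$
exhibiting the $\a^{-1}\b$-structure without invoking the dual.

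The sheaf-level bullets are verified identically using the canonical pullback identifications $g^*(\FF\otimes\GG)\simeq g^*\FF\otimes g^*\GG$, $g^*(\FF^*)\simeq(g^*\FF)^*$, and $g^*\HHom(\FF,\GG)\simeq\HHom(g^*\FF,g^*\GG)$. One defines the equivariant structure componentwise (for instance $\theta^{\FF\otimes\GG}_g = \theta^\FF_g\otimes\theta^\GG_g$, and on $\HHom$ by conjugation $\phi\mapsto \theta^\GG_g\circ \phi\circ (\theta^\FF_g)^{-1}$), and the required cocycle identity follows from the identities for $\FF$ and $\GG$ by the same multiplicative argument used for representations. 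A representation $V$ yields a trivial sheaf $\O_X\otimes V$ whose $\theta_g$ acts as $R_V(g)$ on the second factor; tensoring this with an $\a$-sheaf $\FF$ gives the $\a\b$-structure on $\FF\otimes V$. For global sections, $\theta_g\colon \FF\to g^*\FF$ induces $\Gamma(X,\FF)\to \Gamma(X, g^*\FF) = \Gamma(X,\FF)$ (the final equality using that $g$ is an automorphism of $X$), and the $\a$-cocycle identity for the $\theta_g$'s translates directly into an $\a$-representation structure. The tenth bullet is the global analogue of the $\HHom$ computation, obtained by applying $\Gamma$ to it.

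The main obstacle is the bookkeeping for the dual: the naive candidate $R(g^{-1})^t$ yields the cocycle $\a(h^{-1},g^{-1})$ rather than $\a^{-1}(g,h)$. The two are cohomologous (an explicit coboundary is $\g(g) = \a(g^{-1},g)$ for a normalized $\a$), so one either absorbs the coboundary into an explicit rescaling of the operators, or invokes proposition~\ref{twistedalg} to conclude that the category of $\a$-representations depends only on $[\a]\in H^2(G,\k^*)$. All remaining verifications are straightforward and involve no characteristic restrictions, since we never divide by anything outside $\k^*$.
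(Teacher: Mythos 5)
The paper offers no proof of this proposition at all (it is followed by ``The proof is omitted''), so there is no argument to compare against; your write-up supplies precisely the routine verifications the author leaves to the reader, and it is correct. One small simplification: for the dual you introduce and then repair an avoidable difficulty --- the contragredient $R_{U^*}(g)=\left(R_U(g)^{-1}\right)^t$ (rather than $R_U(g^{-1})^t$) satisfies $R_{U^*}(g)R_{U^*}(h)=\a(g,h)^{-1}R_{U^*}(gh)$ on the nose, with no coboundary correction, exactly parallel to your direct conjugation computation for $\Hom(U,V)$; likewise for $\FF^*$ the structure maps $\eta_g=\left(\theta_g^{-1}\right)^*\colon \FF^*\ra g^*(\FF^*)$ satisfy the $\a^{-1}$-cocycle identity exactly.
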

The proof is omitted.

\medskip 
The following proposition states that, in some sense, 
twisted representations can be studied in terms of 
usual (nontwisted) representations.

\begin{predl}
\label{aboutH}
Suppose $G$ is a group of order $n$, $\k$ is an algebraically closed field of characteristic zero.
Let $\mu_d\subset \k^*$ be the subgroup,
formed by roots of unity of degree $d$.

1. Suppose that $\a\in Z^2(G,\k^*)$ is a cocycle and there exists a $d$-dimensional $\a$-representation $V$
of the group $G$. Then $d\cdot[\a]=0$ in $H^2(G,\k^*)$ and 
$[\a]=[\a']$ for some cocycle $\a'\in Z^2(G,\mu_d)$. 

2. Group $H^2(G,\k^*)$ is of $n$-torsion and canonical map $H^2(G,\mu_n)\ra H^2(G,\k^*)$ is epimorphic.

3. Consider the central extension $\bar G$ of the group $G$ by $\mu_d$, given by some cocycle $\a\in Z^2(G,\mu_d)$. 
Denote by 
$Repr_{(i)}(\bar G)$ a full subcategory in $Repr(\bar G)$,
formed by representations 
$R$, such that $R(\xi)=\xi^i\cdot Id$  for $\xi\in\mu_d$.
Then we have 
$$
Repr(\bar G)\cong\bigoplus_{i=0}^{d-1}Repr_{(i)}(\bar G)
\quad\text{and}\quad
Repr_{(i)}(\bar G)\cong Repr(G,\a^i).
$$
\end{predl}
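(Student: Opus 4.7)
The plan is to deduce everything from a single determinant trick: an $\a$-representation of dimension $d$ exhibits $\a^d$ as a coboundary, after which algebraic closedness lets us replace $\a$ by a cohomologous cocycle in $\mu_d$.

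For part~1, given an $\a$-representation $R\colon G\ra GL(V)$ of dimension $d$, I take determinants of the defining identity $R(g)R(h)=\a(g,h)R(gh)$ to get $\det R(g)\cdot\det R(h)=\a(g,h)^d\cdot\det R(gh)$, which says $\a^d=\partial(\det R)$ is a coboundary; hence $d[\a]=0$. Since $\k$ is algebraically closed of characteristic zero, I can choose a function $\g\colon G\ra \k^*$ with $\g(g)^d=\det R(g)$. Setting $\a'=\a\cdot(\partial\g)^{-1}$, which is cohomologous to $\a$, I compute $(\a')^d=\a^d/\partial(\g^d)=\a^d/\partial(\det R)=1$, so $\a'$ takes values in $\mu_d$.

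Part~2 falls out immediately by applying part~1 to the regular $\a$-representation of $G$ on $\k_\a[G]$ by left multiplication (well-defined thanks to Proposition~\ref{twistedalg}). This has dimension $n=|G|$, so $n[\a]=0$ and $[\a]$ lifts to $H^2(G,\mu_n)$.

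For part~3, I realize $\bar G$ explicitly as the set $\mu_d\times G$ with product $(\xi_1,g_1)(\xi_2,g_2)=(\xi_1\xi_2\a(g_1,g_2),g_1g_2)$, associativity being the cocycle condition. Since $\mu_d$ is central in $\bar G$, for any $R\colon\bar G\ra GL(V)$ the action of $\mu_d$ commutes with that of $\bar G$, so the isotypic decomposition $V=\bigoplus_{i=0}^{d-1}V_i$, where $\mu_d$ acts on $V_i$ by $\xi\mapsto \xi^i$, is a decomposition of $\bar G$-representations. This yields $Repr(\bar G)\cong\bigoplus_i Repr_{(i)}(\bar G)$. For the equivalence $Repr_{(i)}(\bar G)\cong Repr(G,\a^i)$, I use the set-theoretic section $s\colon G\ra\bar G$, $g\mapsto(1,g)$: for $R\in Repr_{(i)}(\bar G)$, set $\~R(g)=R(s(g))$; then
$$\~R(g)\~R(h)=R((\a(g,h),gh))=\a(g,h)^i\~R(gh),$$
so $\~R\in Repr(G,\a^i)$. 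The inverse functor sends $\~R$ to $R(\xi,g)=\xi^i\~R(g)$, and morphisms match in both directions since they are linear maps intertwining the respective group actions.

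The proof has no single hard step. The only mildly delicate point is the extraction of $d$-th roots $\g(g)$ in part~1, which requires algebraic closedness in characteristic zero (equivalently, divisibility of $\k^*$); the rest is a formal manipulation of cocycles and isotypic components.
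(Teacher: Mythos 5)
Your proof is correct and follows essentially the same route as the paper's: taking determinants of the defining relation to get $\a^d=\partial(\det R)$ for part 1, the regular representation for part 2, and the isotypic decomposition under the central $\mu_d$ together with a set-theoretic section for part 3. The only cosmetic difference is in the lifting step of part 1, where the paper invokes the long exact cohomology sequence attached to $0\ra\mu_d\ra\k^*\xra{d}\k^*\ra 0$ while you extract $d$-th roots explicitly at the cochain level; these are the same argument.
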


\begin{proof}
1.
Consider the determinant representation $\Lambda^dV$ of $V$. It is a one-dimensional 
$\a^d$-representation of $G$, thus we get a map  
$R\colon G\ra \k^*$. Since $R(g)R(h)=\a(g,h)^dR(gh)$, we have 
$\a^d=\partial R$.
I.\,e., class $[\a]$ is a $d$-torsion.
Now consider the exact sequence of coefficients
$$0\ra\mu_d\ra \k^*\xra{d} \k^*\ra 0.$$
The following fragment in the long exact sequence in cohomologies 
$$H^2(G,\mu_d)\ra H^2(G,\k^*)\xra{d} H^2(G,\k^*)$$
allows us to find a proper element $[\a']\in H^2(G,\mu_d)$.

2. This follows from 1. Indeed, the group $G$ has a regular representation, which
is an $n$-dimensional $\a$-representation.

3. Let's recall the relation between cocycles and 
central extensions of groups 
(see \cite[chapter IV, \S 3]{Br}). Suppose there is an exact triple of groups 
$$1\ra \mu_d\ra \bar G\ra G\ra 1,$$
where $\mu_d$ is a central subgroup, and for any $g\in G$ one can 
choose an element $\bar g\in\bar G$, mapping into $g$, such that 
\begin{equation}
\label{form555}
\overline{\vphantom{h}g}\cdot\overline h=\overline{gh}\cdot\a(g,h)
\end{equation}
for any $g,h\in G$. Then $\bar G$ is called a central
extension of $G$ by $\mu_d$, given by the cocycle $\a$.

The decomposition 
$$Repr({\bar G})=\bigoplus\limits_{i=0}^{d-1}Repr_{(i)}(\bar G)$$
over characters of the central subgroup  $\mu_d\cong \Z/d\Z$ in $\bar G$
is a standard fact from representation theory.
We will call the representations in $Repr_{(i)}(\bar G)$ \emph{the representations of weight~$i$}.
Assume $R\colon \bar G\ra GL(U)$ is a representation of the weight $i$.
Consider the map
$G\ra GL(U)$, sending an element $g\in G$ to the operator $R(\bar g)$ (here
$\bar g$ is the preimage of $g$ in $\bar G$ fixed above).
Formula~(\ref{form555}) implies that this map is an $\a^i$-representation of the group $G$.
One can check that thus we obtain an equivalence between the categories $Repr_{(i)}(\bar G)$ and $Repr(G,\a^i)$.
\end{proof}

{\bf Remark.} In fact, the second statement is a particular case of
the following result about cohomologies of groups: 
$n\cdot H^i(G,M)=0$ for $i>0$ 
for any finite group $G$ of order $n$ and $G$-module $M$.

\paragraph{Cocycles, twisted representations and sheaves: case of algebraic groups.}

In section~\ref{alggrp} we'll have to consider 
"twisted representations of $G$"
and  "twisted $G$-sheaves" for an algebraic group $G$. 
Thus we need to develop the above theory of cocycles  in the
context of algebraic groups.
Roughly speaking, coefficients $\a(g,h)$ that form the cocycle need to depend algebraically
on $g$ and $h$. But it turns that the naive definition like "$\a(g,h)$
is a regular non-zero function on $G\times G$"  doesn't work.

Below we give reasonable definitions of objects, arising in 
section~\ref{alggrp}.

\begin{definition}
\label{defcoc}
Suppose $G$ is an algebraic group with the multiplication map
$\mu\colon G\times G\ra G$. Suppose $G$ acts on an algebraic variety $X$ and 
$a\colon G\times X\ra X$ is the action map.
A \emph{cocycle} on $G$ %by our definition 
is a pair
$(\LL,\a)$, where $\LL$ is a linear bundle on $G$ and
$\a\colon p_1^*\LL\otimes p_2^*\LL\ra\mu^*\LL$
is an isomorphism of bundles on $G\times G$, satisfying the associativity
condition:
the isomorphisms 
$$(Id \times \mu)^*\a\circ (Id\otimes p_{23}^*\a) \quad\text{and}\quad
(\mu \times Id)^*\a\circ (p_{12}^*\a\otimes Id)
$$
of bundles
$p_1^*\LL\otimes p_2^*\LL\otimes p_3^*\LL$ and $(\mu(Id\times \mu))^*\LL$
on $G\times G\times G$ are equal.
\end{definition}

{\bf Remark.}
Compairing with the case of finite groups, the pair $(\LL,\a)$ 
is a generalization not of a cocylce, but of it's cohomology class.
Nevertheless, one can define twisted representations and sheaves 
starting from a cohomology class, not a cocycle.

Let $(\LL,\a)$ be a cocycle on an algebraic group $G$. Define an
\emph{$(\LL,\a)$-representation} $G$ in a vector space
$V$ as an isomorphism 
$\theta\colon \LL\otimes V\ra \O\otimes V$
between sheaves on $G\times G$ such that 
the diagram 
\begin{equation}
\label{soglas}
\xymatrix{
p_1^*\LL\otimes p_2^*\LL\otimes V \ar[rr]^{Id\otimes p_2^*\theta} \ar[d]_{\a\otimes Id} &&
p_1^*\LL\otimes V \ar[d]^{p_1^*\theta} \\
\mu^*\LL\otimes V \ar[rr]_{\mu^*\theta} && {\O} \otimes V
}
\end{equation}
is commutative.

Define a \emph{morphism} of $(\LL,\a)$-representations as a linear map
$V\ra U$, compatible with the isomorphisms $\theta_V$ and $\theta_U$.
Finite dimensional $(\LL,\a)$-representations of $G$ form an abelian
category, we will denote it by $Repr(G,\LL,\a)$.

Define an \emph{$(\LL,\a)$-$G$-equivariant sheaf} (or 
an \emph{$(\LL,\a)$-sheaf})
on $X$ as a sheaf $F$ on $X$ together with an isomorphism
$\theta\colon p_1^*\LL\otimes p_2^*F\ra a^*F$ on $G\times X$,
satisfying the following condition: 
the diagram
$$\xymatrix{
p_1^*\LL\otimes p_2^*\LL\otimes p_3^*F \ar[rr]^{Id\otimes p_{23}^*\theta} \ar[rrd]_{p_{12}^*\a\otimes Id}&&
p_1^*\LL\otimes (ap_{23})^*F \ar[rr]^{(Id\times a)^*\theta} &&
(a(Id\times a))^*F \ar@{=}[d]\\
&&(\mu p_{12})^*\LL\otimes p_3^*F \ar[rr]^{(\mu\times Id)^*\theta} &&
(a(\mu\times Id))^*F
}
$$
of sheaves on $G\times G \times X$ is commutative.

A morphism of $(\LL,\a)$-sheaves is defined as a sheaf homomorphism
$F_1\ra F_2$  on $X$, compatible with the isomorphisms 
$\theta_1$ and $\theta_2$.
As well as usual $G$-sheaves, $(\LL,\a)$-$G$-sheaves on $X$ form an 
abelian category which will be denoted as
$coh^{G,\LL,\a}(X)$. In the particular case $\LL=\O_G$,
$\a\colon \O\otimes\O\ra\O$ is a multiplication of functions, 
we get the category of (non-twisted) $G-$sheaves. 
If we take a point as $X$, then we obtain the category  of
$(\LL,\a)$-representations of the group $G$.

{\bf Example.} Suppose $X=\P(W)=\P_{\C}^{n-1}$, $G=PGL(W)$ ($G$ acts on
$X$ tautologically), and $\FF=\O_{\P(W)}(-1)$.
Denote by $\LL$ the linear bundle on $G$, associated with the principal
$\C^*$-bundle $GL(W)$ over $G$.
The composition law on $GL(W)$ defines a multiplication on sections of
the bundle $\LL$.
Thus we obtain a cocycle structure on $\LL$, denote it by  $(\LL,\a)$.
The morphism of action $GL(W)\times W\ra W$ extends to the 
isomorphism 
$$p_1^*\LL\otimes p_2^*(\O\otimes W)\ra a^*(\O\otimes W)$$
 of sheaves on $G\times \P(W)$. It defines the 
$(\LL,\a)$-structure on the bundle $\O_{\P(W)}\otimes W$. 
Note that $\O(-1)\subset \O\otimes W$ is a $G$-invariant subsheaf.
Hence, $\O_{\P(W)}(-1)$ is an $(\LL,\a)$-equivariant sheaf.

One can see that the bundle $\LL$ is not trivial: 
$G=PGL(W)=\P(\End W)\setminus\{det=0\}$, so the Picard group
$\Pic G$ is generated by $\O_{\P(\End W)}(1)$ and 
isomorphic to $\Z/n\Z$.
Since $\LL$ is a restriction of the tautologic linear bundle on 
$\P(\End W)$,  $\LL\not\cong \O$.

\medskip

There is a natural way to define a product on the set of cocycles on 
a fixed group $G$.
For two cocycles $(\LL_1,\a_1)$ and $(\LL_2,\a_2)$ 
their product is said to be a pair
$(\LL_1,\a_1)\cdot(\LL_2,\a_2)=(\LL_1\otimes\LL_2,\a_1\otimes\a_2)$,
where the isomorphism $\a_1\otimes\a_2$ is the composition
of maps
$$p_1^*(\LL_1\otimes\LL_2)\otimes p_2^*(\LL_1\otimes\LL_2)=
p_1^*\LL_1\otimes p_2^*\LL_1\otimes p_1^*\LL_2\otimes p_2^*\LL_2
\xra{\a_1\otimes\a_2} \mu^*\LL_1\otimes\mu^*\LL_2=\mu^*(\LL_1\otimes\LL_2).
$$
Clearly, the
 set of cocycles on $G$ with the above operation is a group.
The element in this group, opposite to $(\LL,\a)$, will be denoted as
$(\LL,\a)^{-1}=(\LL^*,\a^*)$.

In the following proposition we list some basic properties of
twisted reprsentations and sheaves. 

\begin{predl}
\label{lasheves}
Let $\FF$ be  an  $(\LL_1,\a_1)$-$G$-sheaf аnd 
$\GG$ be an $(\LL_2,\a_2)$-$G$-sheaf on $X$, let $U$ and $V$ be 
$(\LL_1,\a_1)$- and $(\LL_2,\a_2)$-representations of the group $G$
respectively. Then
\begin{itemize}
\item $U\otimes V$ is an $(\LL_1\otimes\LL_2,\a_1\otimes\a_2)$-representation of the group $G$,
\item $U^*$ is an $(\LL_1^*,\a_1^*)$-representation of $G$,
\item $\Hom(U,V)$ is an $(\LL_1^*\otimes\LL_2,\a_1^*\otimes\a_2)$-representation of $G$,
\item $\FF\otimes\GG$ is an $(\LL_1\otimes\LL_2,\a_1\otimes\a_2)$-sheaf on $X$,
\item $\FF^*$ is an $(\LL_1^*,\a_1^*)$-sheaf,
\item $\HHom(\FF,\GG)$ is an $(\LL_1^*\otimes\LL_2,\a_1^*\otimes\a_2)$-sheaf,
\item $\O_X\otimes V$ is an $(\LL_2,\a_2)$-sheaf,
\item $\FF\otimes V$ is an $(\LL_1\otimes\LL_2,\a_1\otimes\a_2)$-sheaf,
\item $\Gamma(X,\FF)$ is an $(\LL_1,\a_1)$-representation of $G$,
\item $\Hom(\FF,\GG)$ is an $(\LL_1^*\otimes\LL_2,\a_1^*\otimes\a_2)$- of $G$.
\end{itemize}
\end{predl}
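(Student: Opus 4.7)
The plan is to exhibit, for each of the ten items, the required isomorphism $\theta$ on $G\times G$ (for representations) or $G\times X$ (for sheaves), and then verify the associativity diagram of Definition~\ref{defcoc}. Everything reduces to four primitive constructions—tensor product, dual, trivial action, and pushforward along $p_1$—from which the remaining items follow by formal manipulation.

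For the tensor-product items ($U\otimes V$, $\FF\otimes\GG$, and the special cases $\O_X\otimes V$, $\FF\otimes V$), I would take $\theta := \theta_1\otimes\theta_2$, rearranging factors via the canonical identifications $p_i^*(\LL_1\otimes\LL_2)\cong p_i^*\LL_1\otimes p_i^*\LL_2$ and $\mu^*(\LL_1\otimes\LL_2)\cong\mu^*\LL_1\otimes\mu^*\LL_2$. The product cocycle $\a_1\otimes\a_2$ was defined in the text precisely so that the associativity diagram for $\theta_1\otimes\theta_2$ splits into the associativity diagrams for $\theta_1$ and $\theta_2$. For $\O_X\otimes V$ one uses the canonical (trivial) $G$-sheaf structure on $\O_X$; for $\FF\otimes V$ one pulls $V$ back to $X$ via the structure morphism and applies the tensor construction.

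For the duals ($U^*$, $\FF^*$), I would take $\theta^\vee$ to be the inverse-transpose of $\theta_1$, rearranged via the evaluation pairing $\LL_1\otimes\LL_1^*\to\O$; the associativity diagram for $\theta^\vee$ is the inverse-dual of that for $\theta_1$ and so commutes. The $\Hom$ and $\HHom$ items then follow from the canonical identifications $\Hom(U,V)=U^*\otimes V$ and $\HHom(\FF,\GG)=\FF^*\otimes\GG$ combined with the tensor construction of the previous paragraph.

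Finally, for $\Gamma(X,\FF)$ and $\Hom(\FF,\GG)$, I would apply $(p_1)_*$ to the defining isomorphism $\theta\colon p_1^*\LL\otimes p_2^*\FF\to a^*\FF$. The projection formula gives $(p_1)_*(p_1^*\LL\otimes p_2^*\FF)\cong\LL\otimes(p_1)_*p_2^*\FF$, and flat base change along $p_1\colon G\times X\to G$ identifies both $(p_1)_*p_2^*\FF$ and $(p_1)_*a^*\FF$ with $\O_G\otimes_\k\Gamma(X,\FF)$, yielding the required $(\LL,\a)$-representation structure on $\Gamma(X,\FF)$. The case $\Hom(\FF,\GG)$ reduces to taking global sections of the $(\LL_1^*\otimes\LL_2,\a_1^*\otimes\a_2)$-sheaf $\HHom(\FF,\GG)$. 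The main—though entirely routine—obstacle is the bookkeeping in the tensor-product case: tracking pullbacks along $p_1,p_2,p_3$ and along $\mu\times\mathrm{Id}$ and $\mathrm{Id}\times a$ on $G\times G\times X$. Once the tensor case is verified in detail, every remaining item is a formal consequence.
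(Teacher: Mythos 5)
Your outline is correct: each structure isomorphism you propose (tensor of the two $\theta$'s with the canonical reshuffling of pullbacks, inverse-transpose for duals, pullback of $\theta$ along $p_1$ for $\O_X\otimes V$, and $p_{1*}$ with the projection formula and flat base change for $\Gamma(X,\FF)$, noting $p_{1*}a^*\FF\cong p_{1*}p_2^*\FF$ via the shear automorphism $(g,x)\mapsto(g,gx)$ of $G\times X$) does satisfy the associativity diagram, which in each case factors into the diagrams for the inputs. The paper simply states ``The proofs are omitted,'' so there is nothing to compare against; your verification is the routine one the author intended.
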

The proofs are omitted.

\medskip

Likewise for the case of finite groups, cocycles defined above correspond to 
central extensions of groups by the group $\G_m$.
Assume $(\LL,\a)$ is a cocycle on an algebraic group~$G$. 
Denote the total space of the bundle $\LL$ minus zero section by $\~G$.
More formally, $\~G=\Spec_G\AA$, where 
$$\AA=\oplus_{n\in \Z}\LL^{\otimes n}$$
is a sheaf of $\O_G$-algebras on $G$.
Then $\~G$ is a principal $\G_m$-bundle.
Point out that the category of sheaves on $\~G$ and the category 
of sheaves of $\AA$-modules
on $G$ are equivalent.
Now we define an associative multiplication on $\~G$ using the 
isomorphism $\a$.

Indeed, we can define a product map
$$\~\mu\colon \~G\times\~G=\Spec_{G\times G}\left( p_1^*\AA\otimes p_2^*\AA\right) \ra
\Spec_G\AA=\~G$$
using the homomorphism of sheaves of algebras $\mu^*\AA\ra
p_1^*\AA\otimes p_2^*\AA$, which is generated by the isomorphism $\a^{-1}\colon
\mu^*\LL\ra p_1^*\LL\otimes p_2^*\LL$. The associativity condition for $\a$
implies associativity for operation $\~\mu$. Denote by $\pi$ the projection
of $\~G$ onto $G$. The fiber of $\pi$ over the identity in $G$ is
by definition 
$\mathrm{Spec}\,\k\times_G\~G=\mathrm{Spec}\,e^*\AA$, where
 $e\colon \mathrm{Spec}\,\k\ra G$ is the identity element in $G$.
We can choose an element $1_e\in e^*\LL\setminus 0$ such that 
$e^*\a(1_e\otimes 1_e)=1_e$ for the isomorphism
$e^*\a\colon e^*\LL\otimes e^*\LL\ra e^*\LL$. 
We will now identify $e^*\LL$ with
$\k=1_e\cdot \k$. Then the algebra 
$e^*\AA$ would be isomorphic to $\k[t,1/t]$, where $t\in e^*\LL^*, t\cdot 1_e=1$,
and $e^*\a$ would be the ordinary multiplication.
One can check that the identity element in $\~G$ 
can be defined by a homomorphism of algebras $\k[t,1/t]\ra \k$ 
such that $t\mapsto 1$.
Suppose  $i\colon G\ra G$ is an inversion map. Then
an inversion $\~\i$ for $\~G$ is defined by the isomorphism $\AA\ra i^*\AA$
of sheaves of algebras on $G$, which is generated by the the isomorphism
 $\LL\ra i^*\LL^{-1}$.
The latter in it's turn is induced by the pairing 
$$\LL\otimes i^*\LL\cong \Delta^*(p_1^*\LL\otimes p_2^*\LL)\xra{\Delta^*\a}
\Delta^*\mu^*\LL\cong e^*\LL\otimes \O=\O,$$
here $\Delta\colon G\ra G\times G$ stands for the antidiagonal embedding.

By construction, projection $\pi\colon \~G\ra G$ is a group
homomorphism, the kernel $\mathrm{Spec}\,e^*\AA$ is isomorphic
to $\G_m$ and belongs to the center of $\~G$. Thus, we get a central
extension
$$1\ra \G_m\ra \~G\xra{\pi} G\ra 1.$$

Consider the linear bundle $\~\LL=\pi^*\LL$ on  $\~G$ and
the isomorphism $\~\a=\pi^*\a\colon p_1^*\~\LL\otimes p_2^*\~\LL\ra
\~\mu^*\~\LL$.
Clearly, the pair $\pi^*(\LL,\a)=(\~\LL,\~\a)$ is a cocycle on $\~G$.
We claim that this cocycle is trivial.
Indeed, the bundle $\~\LL$ corresponds to the sheaf $\LL\otimes \AA$
of $\AA$-modules. There is
a straightforward isomorphism of  $\AA$-modules  $\LL\otimes \AA\ra \AA$,
which respects the multiplication~$\a$.         

Now consider a variety $X$ with the action of $G$. Evidently,
$X$ is also a $\~G$-variety. Any $(\LL,\a)$-$G$-equivariant sheaf
$\FF$ on $X$ is automatically equipped with the structure of an 
$(\~\LL,\~\a)$-$\~G$-sheaf.
We see that extending the group we obtain a
(non-twisted) equivariant sheaf from the twisted one.

The action of the subgroup $\G_m\subset \~G$ on $X$ is trivial.
How does it act on~$\FF$?
It turns out that this action is linear. The converse is also true: 
any $\~G$-sheaf on $X$, such that the action of $\G_m$ is linear,
admits a canonical $(\LL,\a)$-$G$-equivariant structure.
In fact, a bit more general statement holds:

\begin{predl}
\label{aboutHH}
Let $X$ be an algebraic variety, suppose that  $G$ is an algebraic
group, acting on $X$, and $(\LL,\a)$ is a cocycle on $G$. Consider the
extension
$1\ra \G_m\ra \~G \ra G\ra 1$, corresponding to this cocycle. 
Denote by 
$coh_{(r)}^{\~G}(X)$ the full subcategory in $coh^{\~G}(X)$, 
formed by sheaves $\FF$ such that the subgroup $\G_m\subset \~G$ 
acts on $\FF$ with weight $r$.
Then for any integer $r$ 
there is an equivalence
$$coh^{G,\LL^r,\a^r}(X) \cong coh_{(r)}^{\~G}(X).$$ 
\end{predl}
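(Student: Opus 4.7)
The plan is to construct mutually inverse functors
$\Phi\colon coh^{G,\LL^r,\a^r}(X)\ra coh_{(r)}^{\~G}(X)$ and
$\Psi\colon coh_{(r)}^{\~G}(X)\ra coh^{G,\LL^r,\a^r}(X)$
that act as the identity on the underlying coherent sheaf on $X$ and differ only by how they rewrite the equivariant structure. This is the global geometric analogue of the decomposition $Repr(\bar G)\cong\bigoplus Repr_{(i)}(\bar G)$ from Proposition~\ref{aboutH}.3, and in the case $r=1$ it is essentially the construction sketched informally in the paragraph just before the proposition.

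For $\Phi$, start with an $(\LL^r,\a^r)$-structure $\theta\colon p_1^*\LL^r\otimes p_2^*\FF\ra a^*\FF$ on $\FF$. Pull back along the morphism $\pi\times Id\colon \~G\times X\ra G\times X$ (using that the $\~G$-action on $X$ factors as $a\circ(\pi\times Id)$) to obtain an isomorphism $\~p_1^*\pi^*\LL^r\otimes\~p_2^*\FF\ra\~a^*\FF$ on $\~G\times X$. Now take the $r$-th tensor power of the canonical trivialization $\pi^*\LL\cong\O_{\~G}$ described before the proposition (i.e.\ the $\AA$-module isomorphism $\LL\otimes\AA\cong\AA$ that respects multiplication). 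This identifies the first tensor factor with $\O_{\~G}$ and yields a bare isomorphism $\~\theta\colon\~p_2^*\FF\ra\~a^*\FF$, which will serve as the $\~G$-equivariant structure. Two things must then be verified: (a) the associativity diagram for the $(\LL^r,\a^r)$-structure, pulled back to $\~G\times\~G\times X$, transforms under the trivialization into the associativity diagram of the $\~G$-structure; and (b) the $\G_m$-weight of $\~\theta$ equals $r$. Point (b) holds because the trivialization $\pi^*\LL\cong\O_{\~G}$ shifts the $\G_m$-grading by one (the isomorphism $\oplus_n\LL^{n+1}\cong\oplus_n\LL^n$ of $\AA$-modules is a degree shift), and the $r$-th tensor power shifts by $r$.

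The functor $\Psi$ runs the construction in reverse: given a weight-$r$ $\~G$-equivariant structure $\~\theta\colon\~p_2^*\GG\ra\~a^*\GG$, tensor both sides with $\~p_1^*\pi^*\LL^{-r}$ and use the inverse trivialization to obtain a weight-$0$ isomorphism on $\~G\times X$, which descends along the $\G_m$-torsor $\pi\times Id$ to an isomorphism $p_1^*\LL^r\otimes p_2^*\GG\ra a^*\GG$ on $G\times X$, i.e.\ an $(\LL^r,\a^r)$-structure. That $\Phi$ and $\Psi$ are mutually inverse is immediate from the fact that they act as the identity on the underlying sheaf and that descent along $\pi\times Id$ is quasi-inverse to pullback along $\pi\times Id$ (combined with the trivialization) on weight-$r$ $\G_m$-equivariant sheaves. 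The main obstacle, and arguably the only nontrivial content of the proof, is point (a) above: checking that the two associativity diagrams translate into each other. This reduces to the compatibility of the trivialization with the multiplication law on $\~G$, which is ensured by the very construction of $\~\mu$ from $\a^{-1}$.
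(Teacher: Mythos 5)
Your construction is correct and is essentially the paper's own argument: both directions rest on the canonical trivialization of $\pi^*(\LL,\a)$ on $\~G$ (equivalently, the degree-one shift $\LL\otimes\AA\cong\AA$), with your weight computation in (b) matching the paper's identification of the $\G_m$-action with multiplication by $t^r$. Your descent-along-the-torsor description of $\Psi$ is just a rephrasing of the paper's observation that, for a weight-$r$ sheaf, all graded components $\theta_i$ of the structure map other than the degree-zero one vanish.
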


\begin{proof}
Let $(\LL,\a)^r=(\LL^r,\a^r)$ be a degree of the given cocycle,
and $\FF$ be an $(\LL,\a)^r$-equivariant sheaf on $X$ 
with the structure isomorphism 
$\theta\colon p_1^*\LL^r\otimes p_2^*\FF\ra a^*\FF$. 
Then $\FF$ is also an $(\~\LL,\~\a)^r$-$\~G$-equivariant sheaf.
As it was mentioned above, the cocycle 
$(\~\LL,\~\a)^r$ has a canonical trivialization, 
so $\FF$ can be considered as a $\~G$-sheaf.
We claim that the described correspondence gives an equivalence.
First we need to figure out how the subgroup $\G_m\subset \~G$ acts on 
the sheaf $\FF$.
The action of $\G_m$ on $\FF$ is determined by the isomorphism
$p_1^*\~\LL^r\otimes p_2^*\FF\ra p_2^*\FF$ of sheaves on $\G_m\times X$,
which comes from $\theta$. Or, in other terms,
it is determined by the isomorphism 
$$e^*\LL^r\otimes e^*\AA\otimes \FF\ra e^*\AA\otimes \FF$$
of sheaves of $e^*\AA\otimes \O_X$-modules on $X$. The latter isomorphism
would be identity if we use equality 
$e^*\LL^r=(1_e\cdot \k)^{\otimes r}=1_e\cdot \k$ .
By definition of $\~G$-structure on $\FF$, we need to apply
 the identification $e^*\LL^r\otimes e^*\AA\ra e^*\AA$, 
which is the iteration of the isomorphism $\LL\otimes \AA\ra\AA$, 
described above. As a result, we get a multiplication 
by $t^r\colon \k[t,1/t]\otimes \FF\ra \k[t,1/t]\otimes \FF$.
Therefore, $\G_m$ acts on the sheaf~$\FF$  with weight $r$.

Now let's check the converse: suppose $\FF$ is a  $\~G$-sheaf on $X$,
such that $\G_m\subset \~G$ acts on $\FF$ with weight $r$. We can introduce 
the structure of an $(\LL,\a)^r$-$G$-equivariant sheaf on $\FF$. Indeed,
the action of $\~G$ on $\FF$ is given by an isomorphism of 
sheaves of $p_1^*\AA$-modules on $G\times X$:
$$\~\theta\colon p_1^*\AA\otimes p_2^*\FF\ra p_1^*\AA\otimes a^*\FF.$$
The restriction of $\~\theta$ on 
$p_1^*\LL^r\otimes p_2^*\FF$ is a sum of the morphisms
$$\theta_i\colon p_1^*\LL^r\otimes p_2^*\FF\ra p_1^*\LL^i\otimes a^*\FF.$$
Since $\G_m$ acts on $\FF$ with weight $r$, all components except
$\theta_0$ vanish, $\theta=\theta_0$ and we get an isomorphism
$p_1^*\LL^r\otimes p_2^*\FF\ra a^*\FF$, which is associative.
\end{proof}

As a special case $X=pt$ of proposition~\ref{aboutHH} we get
\begin{predl}
Assume $(\LL,\a)$ is a cocycle on an algebraic group $G$,
and 
$1\ra \G_m\ra \~G \ra G\ra 1$ is a corresponding extension of groups.
Then for any integer $r$  one has an equivalence of categories
$$Repr(G,\LL^r,\a^r) \cong Repr_{(r)}(\~G),$$ 
where 
$Repr_{(r)}(\~G)$ denotes the full subcategory in $Repr(\~G)$ of 
representations $V$, such that the subgroup $\G_m\subset \~G$ 
acts on $V$ with weight $r$.
\end{predl}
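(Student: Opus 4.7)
The plan is to derive this corollary as a direct specialization of Proposition~\ref{aboutHH} to $X=pt=\mathrm{Spec}\,\k$. Since $pt$ carries a unique action of $G$ (through the structure morphism $G\to pt$), a coherent sheaf on $pt$ is simply a finite-dimensional $\k$-vector space and the action map $a\colon G\times pt\to pt$ coincides with the projection $p_2$.

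First I would match the two descriptions on each side. For $X=pt$ and a finite-dimensional vector space $V$, the data of an isomorphism $\theta\colon p_1^*\LL\otimes p_2^*V\to a^*V$ on $G\times pt=G$ becomes exactly an isomorphism $\LL\otimes V\to\O_G\otimes V$ on $G$, and the associativity diagram on $G\times G\times pt=G\times G$ in the definition of an $(\LL^r,\a^r)$-sheaf reduces precisely to diagram~(\ref{soglas}) for the cocycle $(\LL^r,\a^r)$, i.e.\ to the condition defining an $(\LL^r,\a^r)$-representation. Morphisms correspond tautologically. This yields an equivalence
$$coh^{G,\LL^r,\a^r}(pt)\cong Repr(G,\LL^r,\a^r).$$
By the same specialization $coh^{\~G}(pt)\cong Repr(\~G)$, and the subcategory $coh^{\~G}_{(r)}(pt)$ coincides with $Repr_{(r)}(\~G)$, because the ``weight $r$'' condition for the $\G_m$-action is formulated identically on vector spaces and on coherent sheaves over a point.

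Now Proposition~\ref{aboutHH} applied to $X=pt$ supplies the middle equivalence $coh^{G,\LL^r,\a^r}(pt)\cong coh^{\~G}_{(r)}(pt)$, and composing the three identifications produces the desired
$$Repr(G,\LL^r,\a^r)\cong Repr_{(r)}(\~G).$$
There is essentially no obstacle here: the only step that demands attention is the translation of structure data from sheaves on $pt$ to the corresponding data on vector spaces, which is just bookkeeping. The substantive work --- exhibiting the equivalence between $(\LL,\a)^r$-$G$-equivariant structures and weight-$r$ $\~G$-equivariant structures, and verifying the weight condition via the identification $e^*\LL^r\otimes e^*\AA\to e^*\AA$ given by $t^r$ --- has already been carried out in the proof of Proposition~\ref{aboutHH}.
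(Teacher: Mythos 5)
Your proof is correct and matches the paper exactly: the paper derives this proposition as the special case $X=pt$ of Proposition~\ref{aboutHH}, which is precisely your argument. The paper offers no further justification, so your explicit bookkeeping identifying $coh^{G,\LL^r,\a^r}(pt)$ with $Repr(G,\LL^r,\a^r)$ and $coh^{\~G}_{(r)}(pt)$ with $Repr_{(r)}(\~G)$ only makes the intended argument more complete.
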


{\bf Remark.}
Since the central subgroup $\G_m\subset \~G$ is reductive,
the category $Repr(\~G)$ admits the following
decomposition over characters of $\G_m$:
$$Repr(\~G)=\bigoplus\nolimits_{r\in\Z}Repr_{(r)}(\~G),$$
where the components $Repr_{(r)}(\~G)$ were defined above.

The similar decomposition exists for the category of 
$\~G$-equivariant sheaves on a variety:
$$coh^{\~G}(X)=\bigoplus\nolimits_{r\in\Z}coh^{\~G}_{(r)}(X).$$

We see that the category of $(\LL,\a)$-representations of $G$
is a full subcategory (in fact, a direct summand)
in the category of representations of some extension of $G$ by $\G_m$. 
Actually, it is enough to consider only finite extensions, like in the case 
of finite groups.

\begin{lemma}
Let  $(\LL,\a)$ be a cocycle on an algebraic group $G$.
Suppose there exists an $(\LL,\a)$-representation of the group
$G$ in a vector space $V$ of dimension $n$.
Then the cocycle $(\LL,\a)^n$ is trivial.
\end{lemma}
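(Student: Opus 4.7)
The plan is to imitate the finite-group argument of part~1 of proposition~\ref{aboutH}: pass to the determinant of $V$ and exploit the fact that a one-dimensional twisted representation is essentially the same data as a trivialization of the underlying cocycle.

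First I would construct a one-dimensional $(\LL,\a)^n$-representation out of $V$. By proposition~\ref{lasheves}, the tensor power $V^{\otimes n}$ is an $(\LL^n,\a^n)$-representation of $G$. The symmetric group $S_n$ acts on $V^{\otimes n}$ by permutation of tensor factors, and this action commutes with the twisted $G$-action because the twisting enters only through the scalar factors coming from $\a$; hence the alternating summand $\Lambda^n V$ inherits an $(\LL,\a)^n$-representation structure, and since $\dim V = n$ it is one-dimensional. Equivalently, by proposition~\ref{aboutHH} applied to $X=pt$, the representation $V$ corresponds to a weight-one representation of the extension $\~G$, so $\det V$ is a weight-$n$ representation of $\~G$, i.e.\ an $(\LL,\a)^n$-representation of $G$.

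Next I would show that the existence of a one-dimensional $(\LL',\a')$-representation forces the cocycle $(\LL',\a')$ to be trivial. Let $W$ be such a representation with structure morphism $\theta\colon \LL'\otimes W\ra \O_G\otimes W$; fixing a basis vector of $W$ identifies $\theta$ with an isomorphism of line bundles $\tau\colon \LL'\ra \O_G$. Under the induced identifications $p_1^*\tau\otimes p_2^*\tau$ and $\mu^*\tau$, the associativity diagram~(\ref{soglas}) becomes exactly the statement that $\a'$ is carried to the tautological multiplication $\O_G\otimes\O_G\ra\O_G$, so $\tau$ is an isomorphism between $(\LL',\a')$ and the trivial cocycle. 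Applied to the one-dimensional $(\LL,\a)^n$-representation $\Lambda^n V$ produced in step one, this yields the desired trivialization. The only potentially delicate point is checking that the $S_n$-action is compatible with the twisted $G$-structure on $V^{\otimes n}$; this is formal because the twisting acts by scalars, and routing through $\~G$ via proposition~\ref{aboutHH} bypasses the check altogether.
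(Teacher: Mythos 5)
Your proof is correct and follows essentially the same route as the paper: pass to the one-dimensional determinant representation $\Lambda^n V$, which by its very definition is an isomorphism $\LL^n\ra\O_G$, and observe that the compatibility diagram~(\ref{soglas}) forces this isomorphism to carry $\a^n$ to the standard multiplication on $\O_G$. You merely spell out in more detail (via the $S_n$-action on $V^{\otimes n}$, or via the extension $\~G$) why $\Lambda^n V$ inherits the twisted structure, a point the paper leaves implicit.
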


\begin{proof}
By proposition~\ref{lasheves}, there is an $(\LL,\a)^n$-representation of $G$
in one-dimensional space $\Lambda^nV$. That is, there is an isomorphism
$\theta$ of bundles on $G\colon \LL^n\ra \O$. 
The condition~(\ref{soglas}) implies that
$\theta$ sends structure isomorphism $\a^n$ into standard
multiplication $\O\otimes \O\ra \O$.
\end{proof}

Under the assumption of the lemma we will construct
an extension of the group $G$ by the algebraic group $\mu_n$. 
First we fix the isomorphism $\theta\colon \LL^n\ra \O$, 
trivializing the cocycle $(\LL,\a)^n$. 
Define a sheaf of $\O_G$-algebras on  $G$: set
$$\BB=\oplus_{i=0}^{n-1}\LL^{\otimes i}$$
and introduce a multiplication on $\BB$ using the isomorphism $\theta$.
Now let $\bar G$ be the relative spectrum $\Spec_G \BB$. 
Informally, $\bar G$ can be thought as a pullback of the identity section
under the map of raising into $n$-th power:
$\LL\ra\LL^n\cong\O$.
Arguing as above, one can prove the following

\begin{predl}
\label{aboutHH1}
The scheme $\bar G$ defined above is a closed subgroup in $\~G$,
it is a central extension of $G$ by $\mu_n$.
The following categories are equivalent for all $r\in \Z$:
$$coh^{G,\LL^r,\a^r}(X)\cong coh^{\bar G}_{(r)}(X)$$
and
$$Repr(G,\LL^r,\a^r)\cong Repr_{(r)}(\bar G).$$ 
One has the decompositions 
$$coh^{\bar G}(X)\cong \bigoplus_{r=0}^{n-1}coh^{\bar G}_{(r)}(X)$$
and
$$Repr(\bar G)=\bigoplus_{r=0}^{n-1}Repr_{(r)}(\bar G),$$
where notations $coh^{\bar G}_{(r)}(X)$ and 
$Repr_{(r)}(\bar G)$ are similar to the above.
\end{predl}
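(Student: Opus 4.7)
The plan is to follow the template of proposition~\ref{aboutHH}, adapting the construction of $\~G$ to the finite subgroup $\bar G$. First I would realise $\bar G$ as a closed subgroup of $\~G$. There is an obvious surjection of $\O_G$-modules $\AA\twoheadrightarrow \BB$ whose kernel is the ideal $I\subset\AA$ generated locally by expressions $s-\theta(s)$ with $s\in\LL^n$, where $\theta(s)\in\O_G\subset\AA$ is placed in degree zero. The heart of step one is to check that $I$ is a Hopf ideal for $\AA$, i.e.\ that the comultiplication $\mu^*\AA\to p_1^*\AA\otimes p_2^*\AA$ induced by $\a^{-1}$ sends $I$ into $I\otimes\AA+\AA\otimes I$. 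Unwinding the construction on a local section $s\in\mu^*\LL^n$, this reduces to the identity $\mu^*\theta\circ\a^n=(p_1^*\theta)\cdot(p_2^*\theta)$ on $p_1^*\LL^n\otimes p_2^*\LL^n$, which expresses the very fact that $\theta$ is a trivialization of the cocycle $(\LL,\a)^n$. This is the step I expect to require the most care; the remainder of the argument is essentially formal.

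The kernel of $\bar G\to G$ is then identified as the fiber over the identity: $\Spec e^*\BB\cong\Spec\k[t]/(t^n-1)=\mu_n$, naturally embedded in the central subgroup $\G_m=\Spec e^*\AA\subset\~G$, and hence central in $\bar G$. This yields the desired extension $1\to\mu_n\to\bar G\to G\to 1$.

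For the equivalences I would chain proposition~\ref{aboutHH} with a restriction step along $\bar G\hookrightarrow\~G$. Since that proposition already gives $coh^{G,\LL^r,\a^r}(X)\cong coh^{\~G}_{(r)}(X)$, it suffices to produce an equivalence $coh^{\~G}_{(r)}(X)\cong coh^{\bar G}_{(r)}(X)$. The key observation is that $\bar G$ together with the central $\G_m$ generate $\~G$ (both surject onto $G$ and $\ker(\~G\to G)=\G_m$), with intersection $\bar G\cap\G_m=\mu_n$; centrality of $\G_m$ then gives $\~G\cong(\bar G\times\G_m)/\mu_n$ with $\mu_n$ embedded antidiagonally. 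From this description, a $\~G$-sheaf with $\G_m$-weight $r$ is precisely a $\bar G$-sheaf with $\mu_n$-weight $r$, since on the antidiagonal $\mu_n$ the two weights automatically cancel; the restriction functor is therefore an equivalence, with inverse given by tensoring with the weight-$r$ character of $\G_m$ and descending. Setting $X=pt$ gives the analogous equivalence for representations.

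Finally, the two decompositions follow from the fact that the central subgroup $\mu_n\subset\bar G$ is linearly reductive with characters $\xi\mapsto\xi^r$ for $r=0,\dots,n-1$: every $\bar G$-sheaf (resp.\ representation) decomposes canonically into $\mu_n$-isotypic components, and the centrality of $\mu_n$ guarantees that this decomposition is preserved by the full $\bar G$-action, producing the stated direct sum decompositions of $coh^{\bar G}(X)$ and $Repr(\bar G)$.
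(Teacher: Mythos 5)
Your proposal is correct, and it supplies exactly the details the paper leaves out: the paper gives no proof here, saying only ``arguing as above,'' i.e.\ deferring to the construction of $\~G$ and proposition~\ref{aboutHH}. Your first half is precisely that intended argument: realizing $\bar G=\Spec_G\BB$ inside $\~G$ via the surjection $\AA\twoheadrightarrow\BB$, and correctly isolating the one nontrivial verification, namely that the ideal generated by $s-\theta(s)$, $s\in\LL^n$, is a Hopf ideal, which unwinds to the identity $\mu^*\theta\circ\a^n=p_1^*\theta\cdot p_2^*\theta$ expressing that $\theta$ trivializes $(\LL,\a)^n$ (this is the content of the lemma preceding the proposition). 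The one place you genuinely diverge is the equivalence $coh^{G,\LL^r,\a^r}(X)\cong coh^{\bar G}_{(r)}(X)$: the paper's ``argue as above'' would repeat the weight-$r$ component computation of proposition~\ref{aboutHH} directly for $\BB$ in place of $\AA$, whereas you reuse that proposition and pass through the isomorphism $\~G\cong(\bar G\times\G_m)/\mu_n$ with $\mu_n$ embedded antidiagonally. Your route is a clean shortcut that avoids redoing the computation, at the cost of having to justify the quotient description of $\~G$; both are valid, and your cancellation check on the antidiagonal $\mu_n$ is the right point to verify. Only cosmetic caveat: the fiber $\Spec e^*\BB$ is $\Spec\k[t]/(t^n-c)$ for the scalar $c=e^*\theta(1_e^{\otimes n})$, which is $\mu_n$ either after rescaling $\theta$ or by observing that any length-$n$ closed subgroup scheme of $\G_m$ is $\mu_n$; this does not affect the argument.
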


\paragraph{Admissible subcategories.}
According to a definition in~\cite[section 3]{Bo}, 
a subcategory in the given category is said to be \emph{right (left)
admissible}
if the embedding functor has a right (left) adjoint. A subcategory
is \emph{admissible} if it is both right and left admissible.
The following easy result will be needed for the sequel (\cite[lemma 3.1, 
theorem 3.2a]{Bo}). 
\begin{predl}
\label{bondal}
1. Suppose $\TT$ is a triangulated category, 
$\SS_1,\dots,\SS_n\subset \TT$ are right admissible tringulated subcategories.
Suppose the categories $\SS_i$ are semiorthogonal, i.\,e.\,
$\Hom(\SS_j,\SS_i)=0$ for $i<j$.
Then $\TT$ admits the semiorthogonal decomposition
$\TT=\langle\SS^{\perp},\SS_1,\dots,\SS_n\rangle$, where 
the right orthogonal 
$\SS^{\perp}$ is by definition a full subcategory in $\TT$,                  
consisting of all objects $X$ such that $\Hom(\SS_i,X)=0$ for any $i$.
Similar is true with "left" changed into "right".

2. If a subcategory in a tringulated category with finite-dimensional 
$\Hom$ spaces is generated by an 
exceptional collection, it is admissible.
\end{predl}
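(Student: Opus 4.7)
For the first part, my plan is to successively ``cut off'' the subcategories $\SS_k$ using their right adjoints. Right admissibility of $\SS_k$ means the inclusion $i_k\colon \SS_k\hookrightarrow \TT$ has a right adjoint $i_k^!$; equivalently, every $Y\in \TT$ fits into a canonical triangle $i_k i_k^!(Y)\ra Y\ra Y'\ra i_k i_k^!(Y)[1]$ with $Y'\in \SS_k^\perp$ (the orthogonality of the cone follows directly from the counit of the adjunction). Starting with $X_n:=X$, I would iteratively apply this construction: first for $\SS_n$ to produce $X_{n-1}$, then for $\SS_{n-1}$ to produce $X_{n-2}$, and so on. The key observation at each stage is that the semiorthogonality $\Hom(\SS_j,\SS_i)=0$ for $i<j$ guarantees $\SS_i\subset \bigcap_{j>i}\SS_j^\perp$, so the newly produced ``$\SS_{n-j}$-piece'' $i_{n-j}i_{n-j}^!(X_{n-j})$ automatically lies inside all previously built orthogonals. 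Since each $\SS_k^\perp$ is a triangulated subcategory, two-out-of-three applied to the defining triangle forces $X_{n-j-1}$ to remain in $\bigcap_{k>n-j-1}\SS_k^\perp$. After $n$ steps, $X_0\in \bigcap_k \SS_k^\perp=\SS^\perp$, and the stacked triangles produce a filtration of $X$ whose graded pieces lie in $\SS_1,\dots,\SS_n,\SS^\perp$; this is precisely the decomposition $\TT=\langle \SS^\perp,\SS_1,\dots,\SS_n\rangle$. The ``left'' version is entirely analogous, using left adjoints and left orthogonals.

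For the second part, I would first treat a single exceptional object $E$. Its triangulated envelope $\langle E\rangle$ is equivalent to $\D^b(\k\text{-mod})$ via $V^\bullet\mapsto V^\bullet\otimes_\k E$, since $\Hom(E,E[k])=0$ for $k\neq 0$. Using finite-dimensionality of $\Hom$ spaces, I would define the right adjoint to the inclusion as $X\mapsto \mathrm{RHom}(E,X)\otimes_\k E$, with counit the evaluation morphism; the adjunction
$$\Hom_{\TT}(V^\bullet\otimes_\k E, X)\cong \Hom_\k(V^\bullet,\mathrm{RHom}(E,X))$$
is immediate. Replacing $\mathrm{RHom}(E,X)$ by $\mathrm{RHom}(X,E)^{\vee}$ produces the left adjoint. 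For a general exceptional collection $(E_1,\dots,E_n)$ I would argue by induction on $n$: assuming $\SS':=\langle E_1,\dots,E_{n-1}\rangle$ is admissible, Part~1 applied to the semiorthogonal pair $(\SS',\langle E_n\rangle)$ yields a decomposition of the total triangulated hull, and adjoints to the inclusion $\langle E_1,\dots,E_n\rangle\hookrightarrow \TT$ are assembled by composing and gluing the adjoints constructed step by step.

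The main obstacle, in my view, lies in Part~1: the careful bookkeeping needed to confirm that at each stage of the iteration the residual object $X_{n-j}$ still lies in all previously constructed orthogonals $\SS_k^\perp$ for $k>n-j$. This rests on two facts that must be verified cleanly: that each $\SS_k^\perp$ is closed under shifts and cones, and that the semiorthogonality hypothesis indeed places each newly introduced $\SS_{n-j}$-component inside all of those orthogonals. Once this inductive invariant is secured, converting the resulting filtration of $X$ into a semiorthogonal decomposition is a formal matter.
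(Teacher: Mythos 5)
Your argument is correct, but note that the paper does not prove this proposition at all: it is quoted verbatim from Bondal's work (\cite[lemma 3.1, theorem 3.2a]{Bo}), and your proof is essentially the standard argument from that source --- the iterated counit triangles for Part 1 and the explicit adjoints $R\Hom(E,\cdot)\otimes E$ and $R\Hom(\cdot,E)^{\vee}\otimes E$ for Part 2. The only point worth flagging is that for $R\Hom(E,X)$ to define an object of $\D^b(\k\text{-mod})$ you implicitly need $\Hom(E,X[k])$ to vanish for all but finitely many $k$ (Ext-finiteness), and in the inductive step of Part 2 the assertion that $\langle \SS',\langle E_n\rangle\rangle$ is again admissible is itself a small gluing lemma that deserves the one-line verification you sketch; both points are at the same level of precision as the cited statement and do not constitute gaps.
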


\section{General theorems.}
Theorems of this section are the main result of the paper.
We start with the special case of finite groups
to make the exposition more clear.

\subsection{Case of finite groups.}
\label{fingrp}

\begin{theorem}
\label{th1}
Let $G$ be a finite group, $X$ be an algebraic variety over a field $\k$
with the action of $G$. Suppose in addition that $\k=\bar \k$ and $char(\k)$ 
doesn't divide the order of $G$.
Suppose that the category $\D(X)=\D^b(coh(X))$ has 
a full exceptional collection of sheaves $(E_1, \dots, E_n)$.
Suppose each sheaf $E_i$ admits a $G$-equivariant structure;
denote corresponding $G$-sheaf by $\EE_i$.
Denote by $V_1,\dots,V_m$ all irreducible representations of $G$ over $\k$.
Then the collection, organized into blocks 
\begin{equation}
\label{coll1}
\left(
\begin{array}{clclc}
\EE_1\otimes V_1 && \EE_2\otimes V_1 && \EE_n\otimes V_1 \\
\vdots & , & \vdots & ,   \dots , & \vdots  \\
\EE_1\otimes V_m && \EE_2\otimes V_m && \EE_n\otimes V_m
\end{array}
\right) ,
\end{equation}
is a full and exceptional collection in the equivariant derived 
category
 $\D^G(X)=\D^b(coh^G(X))$.
\end{theorem}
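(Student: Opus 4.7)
The plan is to verify that the displayed collection is exceptional and generates $\D^G(X)$, and then to invoke Proposition~\ref{bondal}. The crucial computational tool is the identity $\Hom_G(-,-) = \Hom(-,-)^G$ from the description of morphisms in $coh^G(X)$; because $|G|$ is coprime to $\mathrm{char}(\k)$, the functor $(-)^G$ on $Repr(G)$ is exact, so this identity lifts to Ext:
$$\Ext^*_G(\FF,\GG) = \bigl(\Ext^*(\FF,\GG)\bigr)^G.$$
Combined with the isotypic decomposition of an arbitrary $G$-representation over our algebraically closed $\k$, this reduces every equivariant computation to a non-equivariant one.

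For exceptionality I would fix indices and compute
$$\Ext^*_G(\EE_i\otimes V_j,\,\EE_k\otimes V_l) = \bigl(\Ext^*(\EE_i,\EE_k)\otimes V_j^*\otimes V_l\bigr)^G.$$
If $i>k$, already $\Ext^*(E_i,E_k)=0$ by exceptionality of the original collection, and the whole expression vanishes. If $i=k$, then $\Ext^*(\EE_i,\EE_i)=\k$ is concentrated in degree $0$ with trivial $G$-action (the identity of $\EE_i$ is $G$-fixed), and the expression collapses to $(V_j^*\otimes V_l)^G = \Hom_G(V_j,V_l)$, which by Schur's lemma is $\k$ for $j=l$ and $0$ otherwise. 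Taken together, these cases show that each $\EE_i\otimes V_j$ is exceptional, that distinct summands in one block are $\Ext$-orthogonal, and that later blocks are left-orthogonal to earlier ones, which is exactly the exceptional-collection condition for the ordering displayed in~(\ref{coll1}).

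For fullness let $\SS\subset\D^G(X)$ be the triangulated subcategory generated by the collection; by part~2 of Proposition~\ref{bondal} it is admissible, so it suffices to prove $\SS^\perp=0$. Given $\FF\in\SS^\perp$, the same formula gives
$$0 = \Ext^*_G(\EE_i\otimes V_j,\FF) = \Hom_G\bigl(V_j,\,\Ext^*(\EE_i,\FF)\bigr)$$
for every $i$ and every irreducible $V_j$. Since $V_1,\dots,V_m$ exhaust the irreducibles and $Repr(G)$ is semisimple, all isotypic components of the $G$-representation $\Ext^*(\EE_i,\FF)$ vanish, hence $\Ext^*(\EE_i,\FF)=0$ even as a bare graded $\k$-vector space. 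Thus the image of $\FF$ under the forgetful functor $\D^G(X)\to\D(X)$ is right-orthogonal to the full exceptional collection $(E_1,\dots,E_n)$ and hence is zero in $\D(X)$. Since the forgetful functor is exact and conservative, $\FF=0$ in $\D^G(X)$.

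No step is particularly delicate. The argument rests on two standard facts\dash exactness of the invariants functor $(-)^G$ on $Repr(G)$ and conservativity of the forgetful functor $\D^G(X)\to\D(X)$\dash both of which follow from the hypothesis that $|G|$ is invertible in $\k$; this same hypothesis also underlies Schur's lemma for the irreducible decomposition.
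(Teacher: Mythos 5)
Your proposal is correct and follows essentially the same route as the paper: the identity $\Ext^*_G=(\Ext^*)^G$ via semisimplicity of $Repr(G)$, the same tensor-decomposition computation for exceptionality, and Proposition~\ref{bondal} plus fullness of $(E_1,\dots,E_n)$ in $\D(X)$ for generation (the paper phrases the last step contrapositively, picking an irreducible summand $V\subset\Hom^r(\EE_i,\FF)$ rather than arguing that all isotypic components vanish, but the content is identical). The only cosmetic quibble is that conservativity of the forgetful functor $\D^G(X)\to\D(X)$ is automatic and does not depend on $|G|$ being invertible in $\k$.
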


\begin{proof}
First note that the category of representations of $G$ over $\k$ is semisimple,
hence for $G$-sheaves $\FF_1$ and  $\FF_2$ one has:
$$\Hom^i_G(\FF_1,\FF_2)=(\Hom^i(\FF_1,\FF_2))^G.$$
That is, equivariant $\Ext$ groups can be computed as invariants
of $\Ext$ groups in usual category of sheaves.

Now we check that the collection~(\ref{coll1}) is exceptional.
For $i<j$

$$\Hom^r_G(\EE_j\otimes V_p,\EE_i\otimes V_q)\subset
\Hom^r(E_j\otimes V_p,E_i\otimes V_q)=
\Hom^r(E_j,E_i)\otimes V_p^*\otimes V_q=0,$$
since the collection $(E_1, \dots, E_n)$ is exceptional. 
For $i=j$ we have

\begin{multline*}
\Hom^r_G(\EE_i\otimes V_p,\EE_i\otimes V_q)=
(\Hom^r(E_i\otimes V_p,E_i\otimes V_q))^G=
(\Hom^r(E_i,E_i)\otimes V_p^*\otimes V_q)^G=\\
=(\Hom^r(E_i,E_i)\otimes \Hom(V_p, V_q))^G=
\begin{cases}
\k&\text{for $p=q, r=0$,}\\
0&\text{otherwise.}
\end{cases}
\end{multline*}

So the collection~(\ref{coll1}) is exceptional. 
Let's show it is full.

By proposition~\ref{bondal}, it suffices to check that 
the right orthogonal to the subcategory in $\D^G(X)$,
generated by the collection~(\ref{coll1}), is zero. 
Take an object $\FF\in \D^G(X)$ such that $\FF\ne 0$.

Since the collection $(E_1, \dots, E_n)$ generates the category $\D(X)$, 
 the space $\Hom^r(\EE_i,\FF)$ is nonzero
for some $i$ and $r$. Consider the action of the group $G$ on
this space, choose any irreducible subspace $V$ in it.
Then we have
$$\Hom^r_G(\EE_i\otimes V,\FF)=\Hom_G(V,\Hom^r(\EE_i,\FF))\ne 0.$$
Therefore, $\FF$ is not right orthogonal to the subcategory, generated by 
collection~(\ref{coll1}). The theorem is proved.
\end{proof}

Theorem~\ref{th1} is a special case of theorem~\ref{th1a} proved below. 
We give here this clear and compact proof of special case to 
demonstrate how the ideas, necessary for the general case, work.
  
\medskip
Assumptions on sheaves $E_1,\dots,E_n$ in theorem~\ref{th1} 
can be weakened: the construction works for invariant sheaves, not only
equivariant.

Let, as usual, $G$ be a finite group acting on a variety $X$. 
Suppose that $E$ is an exceptional coherent sheaf on $X$ and
$g^* E\cong E$ for any $g\in G$  
(in this case we say that $E$ is invariant under the action of $G$).
We claim that $E$ can be made an $\a$-sheaf for some 2-cocycle $\a$ of
the group $G$.
Indeed, let's fix an isomorphism $\theta_g\colon E\ra g^*E$ for
every $g$. Since $E$ is exceptional,
the isomorphisms $\theta_{gh}$ and 
$h^*(\theta_g)\circ  \theta_h\colon  E\ra (gh)^*E$
differ by multiplication by a scalar. Denote this scalar by $\a(g,h)$. 
One can check that $\a(g,h)$ form a cocycle and that the cohomology class
of that cocycle doesn't depend on the choice of isomorphisms $\theta_g$. 
We see that the sheaf $E$ with the isomorphisms~$\theta_g$ 
is an $\a$-sheaf by definition.

\begin{theorem}
\label{th2}
Let $X$ be an algebraic variety over a field $\k$
with the action of a finite group $G$. Suppose in addition that $\k=\bar \k$ and 
$char(\k)$ 
doesn't divide the order of $G$.
Suppose that the category $\D(X)$ has 
a full exceptional collection of sheaves $(E_1, \dots, E_n)$.
Assume that every sheaf $E_i$ is invariant under the action of $G$, 
i.e. $g^*E_i\cong E_i$ for all $g$ и $i$.
Make the sheaf $E_i$ equivariant with respect to some cocycle $\a_i$ of
the group $G$, denote the $\a_i$-sheaf we obtain by $\EE_i$.
Let $V^{(i)}_1,\dots,V^{(i)}_{m_i}$ be all irreducible 
$\a_i^{-1}$-representations of the group  $G$ over $\k$.

Then the collection, consisting of blocks 
\begin{equation*}
\left(
\begin{array}{clclc}
\EE_1\otimes V^{(1)}_1 && \EE_2\otimes V^{(2)}_1 && \EE_n\otimes V^{(n)}_1 \\
\vdots & , & \vdots & ,   \dots , & \vdots  \\
\EE_1\otimes V^{(1)}_{m_1} && \EE_2\otimes V^{(2)}_{m_2} && \EE_n\otimes
V^{(n)}_{m_n}
\end{array}
\right) ,
\end{equation*}
is a full exceptional collection in the category $\D^G(X)$.
\end{theorem}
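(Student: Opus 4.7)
The plan is to follow the proof of Theorem~\ref{th1} almost verbatim, but tracking the cocycles $\a_i$ carefully throughout. The reason we must tensor with $\a_i^{-1}$-representations (rather than ordinary ones) is forced by proposition~\ref{asheves}: the tensor product $\EE_i\otimes V^{(i)}_j$ is then an $\a_i\cdot\a_i^{-1}=1$-sheaf, i.e.\ a genuine $G$-equivariant sheaf, so the objects of the collection do lie in $\D^G(X)$. Observe also that proposition~\ref{twistedalg} guarantees semisimplicity of the category of $\a_i^{-1}$-representations, so a complete list $V^{(i)}_1,\dots,V^{(i)}_{m_i}$ of irreducibles exists and every $\a_i^{-1}$-representation splits into such irreducibles.

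For semiorthogonality I would first treat $i<j$: using proposition~\ref{asheves} and the fact that $\Hom^r_G$ is a subspace of the ordinary $\Hom^r$, one has
\[
\Hom^r_G(\EE_j\otimes V^{(j)}_p,\EE_i\otimes V^{(i)}_q)\subset \Hom^r(E_j,E_i)\otimes (V^{(j)}_p)^*\otimes V^{(i)}_q=0,
\]
because $(E_1,\dots,E_n)$ is exceptional in $\D(X)$. For $i=j$ the ambient $\Hom$-space factors as $\Hom^r(E_i,E_i)\otimes\Hom(V^{(i)}_p,V^{(i)}_q)$; the first tensorand is $\k$ for $r=0$ and zero otherwise, while the second, by proposition~\ref{asheves}, is an $(\a_i^{-1})^{-1}\cdot\a_i^{-1}=1$-representation, i.e.\ an ordinary representation. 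Applying Schur's lemma in the semisimple category of ordinary representations then gives $\k$ when $p=q$ and zero otherwise, so the collection is exceptional.

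For fullness I would apply proposition~\ref{bondal} and show the right orthogonal vanishes. Given $\FF\in\D^G(X)$ with $\FF\ne 0$, the underlying object is nonzero in $\D(X)$, and since $(E_1,\dots,E_n)$ generates $\D(X)$ there exist $i,r$ with $\Hom^r(E_i,\FF)\ne 0$. By proposition~\ref{asheves} this space carries an $\a_i^{-1}$-representation structure ($\EE_i$ is $\a_i$, $\FF$ is trivial-cocycle, hence $1\cdot\a_i^{-1}=\a_i^{-1}$). By semisimplicity it contains an irreducible $\a_i^{-1}$-subrepresentation, necessarily isomorphic to some $V^{(i)}_k$, and the inclusion gives
\[
\Hom^r_G(\EE_i\otimes V^{(i)}_k,\FF)=\Hom_G(V^{(i)}_k,\Hom^r(\EE_i,\FF))\ne 0,
\]
where the outer $\Hom_G$ makes sense as ordinary $G$-invariants because $V^{(i)}_k$ and $\Hom^r(\EE_i,\FF)$ share the same twist $\a_i^{-1}$, so their inner $\Hom$ is a $1$-representation.

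I expect the main obstacle to be the cocycle bookkeeping: at every step one must verify that the $\Hom$-space under consideration has \emph{trivial} twist, so that ordinary Schur, ordinary $G$-invariants, and proposition~\ref{twistedalg}'s semisimplicity apply. This is precisely what the choice of $\a_i^{-1}$-representations arranges, and it is the only point where the argument genuinely departs from the proof of Theorem~\ref{th1}.
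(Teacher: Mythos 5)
Your proposal is correct and matches the paper's intended argument: the paper omits the proof of this theorem, stating only that it ``repeats the proof of Theorem~\ref{th1}'' (and alternatively follows from Theorem~\ref{th2a} together with semisimplicity of $Repr(G,\a_i^{-1})$ from Proposition~\ref{twistedalg}), and your cocycle bookkeeping supplies exactly the details that make that repetition work. One tiny wording point: in the $i=j$ case the identification of $(\Hom(V^{(i)}_p,V^{(i)}_q))^G$ with $\k$ for $p=q$ and $0$ otherwise is Schur's lemma for the irreducible objects $V^{(i)}_p,V^{(i)}_q$ of the semisimple category $Repr(G,\a_i^{-1})$ (over $\k=\bar\k$), not of the category of ordinary representations, though the computation is otherwise as you state.
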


The proof repeats the proof of theorem~\ref{th1}.
We omit it because theorem~\ref{th2} is a corollary from 
theorem~\ref{th2a} below.
To deduce this corollary it suffices to note that the categories
of $\a_i^{-1}$-representations of $G$ are semisimple by
proposition~\ref{twistedalg}.

\medskip
For further generalizations we need a notion of coinduced $G$-sheaf.
Let $G$ be a group acting on a variety $X$, and $H$ be a subgroup of $G$.
Then the forgetful functor $\Res_H^G$ from $G$-sheaves to $H$-sheaves
has the right adjoint functor which is denoted $\Ind_H^G$. 
Given an $H$-sheaf $\FF$ on $X$, the $G$-sheaf $\Ind_H^G(\FF)$ is called
a sheaf, \emph{coinduced from the subgroup $H$}.
As a sheaf $\Ind_H^G(\FF)$ is isomorphic to 
$\bigoplus_{g\in J}g^*\FF$, where $g$ runs the set $J$ of representatives
of right cosets $H\setminus G$. See section~\ref{alggrp} for details.

\begin{theorem}
\label{th3}
Let $X$ be an algebraic variety over a field $\k$ with the action of
a finite group $G$. We assume that $\k=\bar \k$ and $char(\k)$ doesn't
divide the order of $G$.
Suppose that the category $\D(X)$ has a full exceptional collection,
consisting of blocks:

\begin{equation*}
\left(
\begin{array}{clclc}
E^{(1)}_1 && E^{(2)}_1 && E^{(n)}_1 \\
\vdots & , & \vdots & ,   \dots , & \vdots  \\
E^{(1)}_{k_1} && E^{(2)}_{k_2} && E^{(n)}_{k_n}
\end{array}
\right) ,
\end{equation*}
such that $G$ transitively permutes the sheaves inside each block. 
Let $H_i$ be the stabilizer of the sheaf $E^{(i)}_1$.
Make $E^{(i)}_1$ an equivariant sheaf with respect to some 2-cocycle
$\a_i$ of the group $H_i$, denote the $\a_i$-$H_i$-sheaf 
we obtain by $\EE^{(i)}$.
Let $V^{(i)}_1,\dots,V^{(i)}_{m_i}$ be all irreducible
$\a_i^{-1}$-representations of $H_i$ over $\k$. Then the collection, 
consisting of blocks
\begin{equation*}
\left(
\begin{array}{clclc}
\Ind_{H_1}^G(\EE^{(1)}\otimes V^{(1)}_1) &&
\Ind_{H_2}^G(\EE^{(2)}\otimes V^{(2)}_1) &&
\Ind_{H_n}^G(\EE^{(n)}\otimes V^{(n)}_1) \\
\vdots & , & \vdots & ,   \dots , & \vdots  \\
\Ind_{H_1}^G(\EE^{(1)}\otimes V^{(1)}_{m_1}) &&
\Ind_{H_2}^G(\EE^{(2)}\otimes V^{(2)}_{m_2}) &&
\Ind_{H_n}^G(\EE^{(n)}\otimes V^{(n)}_{m_n})
\end{array}
\right)
\end{equation*}
is a full exceptional collection in the category $\D^G(X)$.
\end{theorem}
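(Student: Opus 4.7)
The plan is to mimic the proof of Theorem~\ref{th1} but with coinduction/restriction adjunction replacing the direct use of $G$-invariants, so that computations of $\Hom_G$ between coinduced sheaves get pulled back to $\Hom_{H_i}$ on each stabilizer, where Theorem~\ref{th2} essentially applies.

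The first step is to set up the key computation: by the adjunction $\Ind_{H_i}^G \dashv \Res_{H_i}^G$ (well, $\Res \dashv \Ind$ with $\Ind$ being coinduction as defined in the paper), one has
\begin{equation*}
\Hom^r_G\bigl(\Ind_{H_i}^G(\EE^{(i)}\otimes V^{(i)}_p),\ \Ind_{H_j}^G(\EE^{(j)}\otimes V^{(j)}_q)\bigr)
=\Hom^r_{H_i}\bigl(\EE^{(i)}\otimes V^{(i)}_p,\ \Res_{H_i}^G\Ind_{H_j}^G(\EE^{(j)}\otimes V^{(j)}_q)\bigr).
\end{equation*}
The coinduced sheaf decomposes, after restriction to $H_i$, as a direct sum over double cosets $H_i\backslash G/H_j$ of sheaves of the form $g^*\EE^{(j)}\otimes g^*V^{(j)}_q$ with the appropriate $H_i\cap gH_jg^{-1}$-structure (Mackey-type formula). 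Since $G$ permutes the block $\{E^{(j)}_1,\dots,E^{(j)}_{k_j}\}$ with stabilizer $H_j$ of the first element, the sheaves $g^*E^{(j)}_1$ for $g$ ranging over representatives of $G/H_j$ are exactly the $E^{(j)}_\ell$'s.

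Next I would verify exceptionality block-by-block. For $i<j$ every summand on the right is of the form $\Hom^r(E^{(i)}_1, g^*E^{(j)}_1)\otimes(\text{twists and reps})$, which vanishes because $(E^{(1)},\ldots,E^{(n)})$ is an exceptional collection of blocks and all $g^*E^{(j)}_1$ lie in the $j$-th block. For the diagonal $i=j$, only the double coset of the identity contributes a $g^*E^{(i)}_1$ in the same orbit position; other cosets contribute $\Hom^r(E^{(i)}_1, E^{(i)}_\ell)=0$ for $\ell\ne 1$ by exceptionality within the block. The surviving term yields $\Hom^r_{H_i}(\EE^{(i)}\otimes V^{(i)}_p,\EE^{(i)}\otimes V^{(i)}_q)$, which by Proposition~\ref{asheves} equals $(\Hom^r(E^{(i)}_1,E^{(i)}_1)\otimes \Hom(V^{(i)}_p,V^{(i)}_q))^{H_i}$; semisimplicity of $Repr(H_i,\a_i^{-1})$ (Proposition~\ref{twistedalg}) and exceptionality of $E^{(i)}_1$ force this to be $\k$ when $p=q,r=0$ and $0$ otherwise. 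This is precisely the calculation from the proof of Theorem~\ref{th1}, transplanted one stabilizer at a time.

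For fullness I would again use Proposition~\ref{bondal}. Take $\FF\in \D^G(X)$ right-orthogonal to the whole collection. By adjunction, for every $i$, $p$,
\begin{equation*}
0=\Hom^r_G\bigl(\Ind_{H_i}^G(\EE^{(i)}\otimes V^{(i)}_p),\FF\bigr)=\Hom^r_{H_i}\bigl(\EE^{(i)}\otimes V^{(i)}_p,\Res_{H_i}^G\FF\bigr).
\end{equation*}
Forgetting the $G$-structure gives $\FF\in\D(X)$, which is generated by the $E^{(i)}_\ell$'s; hence if $\FF\ne 0$ there exist $i,r$ and some $\ell$ with $\Hom^r(E^{(i)}_\ell,\FF)\ne 0$, and translating by a $g\in G$ I can assume $\ell=1$, so $\Hom^r_{H_i}(\EE^{(i)},\Res_{H_i}^G\FF)\ne 0$ sits as a nonzero $\a_i^{-1}$-representation of $H_i$ (Proposition~\ref{asheves}). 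Choosing an irreducible sub-$\a_i^{-1}$-representation $V^{(i)}_p$ produces a nonzero element in $\Hom^r_{H_i}(\EE^{(i)}\otimes V^{(i)}_p,\Res_{H_i}^G\FF)$, contradicting orthogonality. Hence $\FF=0$ and the collection is full.

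The main obstacle I expect is not the exceptionality calculation itself but rather justifying the Mackey-type decomposition for $\Res_{H_i}^G\Ind_{H_j}^G$ together with the bookkeeping of how the cocycles $\a_i$ transform under conjugation by $g$ (so that $g^*\EE^{(j)}$ becomes an appropriately twisted equivariant sheaf on the intersection $H_i\cap gH_jg^{-1}$). Once that formula is established, everything reduces cleanly to the twisted version of Theorem~\ref{th1}, i.e.\ to the semisimple-category calculation carried out inside a single stabilizer.
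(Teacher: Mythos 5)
Your proposal follows essentially the same route as the paper, which proves Theorem~\ref{th3} as the finite-group specialization of Theorem~\ref{th3a}: reduce $\Hom_G$ between coinduced objects to $\Hom_{H_i}$ via adjunction, decompose $\Res_{H_i}^G\Ind_{H_j}^G$ over cosets, kill all cross terms by orthogonality inside and between blocks, and finish fullness with Proposition~\ref{bondal}. The "main obstacle" you flag is not actually an obstacle: you do not need a full Mackey double-coset formula with twisted equivariant structures on $H_i\cap gH_jg^{-1}$, because every vanishing you use happens already at the level of non-equivariant $\Hom$'s (equivariant $\Hom$ being the $H_i$-invariants thereof, by semisimplicity); the coarse sheaf-level statement of Proposition~\ref{ind}, namely the exact triple $0\to F'\to\Res\Ind\to Id\to 0$ with $F'(\cdot)=\oplus_{g\in J}g^*(\cdot)$ over nontrivial cosets, suffices, since the $g^*E^{(j)}_1$ are exactly the other members of the $j$-th block and are orthogonal to $E^{(i)}_1$. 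One small point to tidy up: you repeatedly use the adjunction in the form $\Hom_G(\Ind A,\,\cdot\,)\cong\Hom_{H}(A,\Res(\cdot))$, i.e.\ with $\Ind$ as a \emph{left} adjoint, whereas the paper defines $\Ind_H^G$ as the \emph{right} adjoint of $\Res_H^G$; for a finite-index subgroup of a finite group coinduction and induction agree, so this is harmless, but it should be said (or the adjunction should be applied on the other side, restricting to $H_j$ rather than $H_i$, as the paper does).
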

This a corollary of theorem~\ref{th3a} in the following section.

\subsection{Case of algebraic groups.}
\label{alggrp}

The above theory with some modifications works for the case of arbitrary
algebraic groups. We will give generalizations of three theorems from the 
previous section.

\paragraph{The first theorem: case of exceptional collection 
of equivariant sheaves in $\D(X)$.}
Let  $X$ be an algebraic variety with the action of an algebraic group $G$.
Consider a $G$-sheaf $\EE$ on $X$ that is exceptional in the 
category $\D(X)$.
Using this sheaf, we embed the category  $Repr(G)$ of finite dimensional
representations of the group $G$ into the category $coh^G(X)$.
Define a functor 
$F_{\EE}\colon Repr(G)\ra coh^G(X)$ by sending a representation $V$ into 
the equivariant sheaf $\EE\otimes V$. The functor $F_{\EE}$ is exact,
corresponding derived functor will also be denoted by $F_{\EE}$.

\begin{lemma}
\label{lemma24}
The functors $R\Hom(\cdot,\EE)^*$ and $R\Hom(\EE,\cdot)$
from $\D^G(X)$ to $\D(Repr(G))$ are respectively 
left and right adjoint to~$F_{\EE}$.
The functor $F_{\EE}\colon \D(Repr(G))\ra \D^G(X)$ is fully faithful.
\end{lemma}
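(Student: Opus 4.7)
The plan is to verify both adjunctions directly, using the tensor-hom adjunction for a finite-dimensional representation $V$ (which behaves like a locally free sheaf), and then to derive full faithfulness from the exceptional property $R\Hom(\EE,\EE)=\k$.

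For the right adjunction I would start from the sheaf-level identity $R\HHom(\EE\otimes V,\FF)\cong V^*\otimes R\HHom(\EE,\FF)$, which holds for finite-dimensional $V$ by flatness and which naturally upgrades to an isomorphism of complexes in $\D(Repr(G))$ after applying global $R\Hom$ and keeping track of the $G$-actions carried by $V,\EE,\FF$. The tensor-hom adjunction in $Repr(G)$ together with the fact that morphisms in $\D^G(X)$ are computed by taking derived $G$-invariants of the complex $R\Hom$ then gives
$$\Hom_{\D^G(X)}(\EE\otimes V,\FF)\cong \Hom_{\D(Repr(G))}(V,R\Hom(\EE,\FF)).$$
For the left adjoint, the analogous projection formula $R\Hom(\FF,\EE\otimes V)\cong R\Hom(\FF,\EE)\otimes V$, combined with the dualizability of the bounded complex $R\Hom(\FF,\EE)$ (which has finite-dimensional cohomology, since we work with coherent sheaves in the bounded derived category), rewrites as $\Hom_{\k}(R\Hom(\FF,\EE)^*,V)$, and passing to $G$-invariants yields the identification with $\Hom_{\D(Repr(G))}(R\Hom(\FF,\EE)^*,V)$.

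Full faithfulness of $F_{\EE}$ then follows once I check that the unit $V\to R\Hom(\EE,F_{\EE}(V))$ of the right adjunction is an isomorphism in $\D(Repr(G))$ for every $V$. But using the projection formula once more, $R\Hom(\EE,\EE\otimes V)\cong R\Hom(\EE,\EE)\otimes V\cong \k\otimes V=V$, the middle step being the exceptionality of $\EE$, and one checks the unit is precisely this canonical isomorphism. The main technical obstacle will be making the compatibility between equivariant $\Ext$ groups and derived $G$-invariants of ordinary $\Ext$ complexes precise for a general (possibly non-reductive) algebraic group, where the invariants functor $(-)^G$ is not exact; this is handled by working with $K$-injective resolutions in $\D^G(X)$ and viewing $R\Hom(\EE,-)$ as a well-defined functor into $\D(Repr(G))$, after which the adjunction and unit computations above become formal.
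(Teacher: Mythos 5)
Your proposal is correct and follows essentially the same route as the paper: both establish the adjunctions via the tensor--hom identities $\Hom(\FF,\EE\otimes V)\cong\Hom(\Hom(\FF,\EE)^*,V)$ and $\Hom(\EE\otimes V,\FF)\cong\Hom(V,\Hom(\EE,\FF))$ followed by taking $G$-invariants, and both deduce full faithfulness from $R\Hom(\EE,\EE\otimes U)\cong U$ via exceptionality. The only (inessential) difference is that the paper proves the adjunctions on the abelian categories and cites Keller for the passage to derived functors, whereas you argue directly at the derived level with resolutions.
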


\begin{proof}
For $V\in Repr(G)$, $\FF\in coh^G(X)$ one has the canonical isomorphisms
of vector spaces
\begin{gather*}
\Hom(\FF,\EE\otimes V)\cong \Hom(\FF,\EE)\otimes V\cong
\Hom(\Hom(\FF,\EE)^*,V)\qquad\text{and}\\
\Hom(\EE\otimes V,\FF)\cong 
\Hom(V,\Hom(\EE,\FF)),
\end{gather*}
compatible with the group action.
Taking the invariants, we see that functors 
$\Hom(\cdot,\EE)^*$ and $F_{\EE}$ are adjoint on abelian categories,
the same is true for $F_{\EE}$ and $\Hom(\EE,\cdot)$.
By~\cite[lemma 15.6]{Ke}, the derived functors are also adjoint.
Now let's verify $F_{\EE}$ is fully faithful. 
Take $V$ and $U\in \D(Repr(G))$. Since the sheaf  $\EE$ is exceptional,
we get 
\begin{multline*}
\Hom_G(\EE\otimes V,\EE\otimes U)=\Hom_G(V,R\Hom(\EE,\EE\otimes U))=\\
=\Hom_G(V,R\Hom(\EE,\EE)\otimes U)=\Hom_G(V,U),
\end{multline*}
therefore $F_{\EE}$ is an embedding of categories.
\end{proof}

We will denote the image of the category $\D(Repr(G))$ under
embedding $F_{\EE}$ by $\EE\otimes \D(Repr(G))$.
Notice that the subcategory $\EE\otimes\D(Repr(G))\subset \D^G(X)$
is admissible by lemma~\ref{lemma24}.

\begin{lemma}
\label{lemma26}
Suppose $\EE_1$ and $\EE_2$ are $G$-equivariant sheaves, exceptional
in the category $\D(X)$, and suppose that $\Ext^i(\EE_1,\EE_2)=0$ for all $i$.
Then the subcategory 
$\EE_1\otimes\D(Repr(G))$ in $\D^G(X)$  is left orthogonal to
the subcategory 
$\EE_2\otimes\D(Repr(G))$.
\end{lemma}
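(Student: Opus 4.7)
The plan is to deduce the orthogonality directly from the adjunction established in Lemma~\ref{lemma24}, together with a projection-formula-type identification.

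First, I would fix $V,U\in \D(Repr(G))$ and compute
\[
\Hom_{\D^G(X)}(\EE_1\otimes V,\EE_2\otimes U)
\]
using the fact that, by Lemma~\ref{lemma24}, the functor $F_{\EE_1}$ has $R\Hom(\EE_1,\cdot)$ as its right adjoint. This rewrites the above $\Hom$ as
\[
\Hom_{\D(Repr(G))}\bigl(V,\,R\Hom(\EE_1,\EE_2\otimes U)\bigr),
\]
so the problem reduces to showing that $R\Hom(\EE_1,\EE_2\otimes U)=0$ in $\D(Repr(G))$.

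Next I would establish the projection-type isomorphism
\[
R\Hom(\EE_1,\EE_2\otimes U)\;\cong\;R\Hom(\EE_1,\EE_2)\otimes U
\]
as objects of $\D(Repr(G))$. Since $U$ is a bounded complex of finite-dimensional representations (hence a perfect complex over $\k$), this follows in the same way as the analogous identification used in the proof of Lemma~\ref{lemma24}: the natural map is an isomorphism termwise on sheaf-level $\HHom$, the $G$-equivariant structure is canonical, and passing to derived functors preserves the isomorphism. By hypothesis $\Ext^i(\EE_1,\EE_2)=0$ for all $i$, so $R\Hom(\EE_1,\EE_2)=0$ in $\D(Repr(G))$, and therefore the tensor product with $U$ vanishes as well.

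Combining these two steps gives $\Hom_{\D^G(X)}(\EE_1\otimes V,\EE_2\otimes U)=0$ for all $V,U\in \D(Repr(G))$, which is exactly the left orthogonality statement. The only mildly subtle point, and the one I would be most careful about, is the projection formula: one has to verify that the canonical map really is an isomorphism at the level of derived functors in the equivariant setting, i.e.\ that no higher derived functors obstruct pulling the finite-dimensional factor $U$ out of $R\Hom$. For bounded $U$ with finite-dimensional cohomology this is routine (reduce to the case $U=\k$ in a single degree by d\'evissage), but it is the step where the argument could go wrong if $U$ were allowed to be an arbitrary object.
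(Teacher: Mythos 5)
Your proposal is correct and follows essentially the same route as the paper: adjunction of $F_{\EE_1}$ with $R\Hom(\EE_1,\cdot)$ from Lemma~\ref{lemma24}, then the identification $R\Hom(\EE_1,\EE_2\otimes U)\cong R\Hom(\EE_1,\EE_2)\otimes U$, which vanishes by hypothesis. Your extra care about justifying the projection-formula step (d\'evissage to $U=\k$ in a single degree) is a point the paper passes over silently, but the argument is the same.
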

\begin{proof}
Since $F_{\EE_1}$ and $R\Hom(\EE_1,\cdot)$ are adjoint,
we get for any $V,U\in \D(Repr(G))$:
\begin{multline*}
\Hom_G(\EE_1\otimes V,\EE_2\otimes U)=
\Hom_G(V,R\Hom(\EE_1,\EE_2\otimes U))=\\
=\Hom_G(V,R\Hom(\EE_1,\EE_2)\otimes U)=\Hom_G(V,0)=0.
\end{multline*}
\end{proof}

Now we are ready to prove theorem~\ref{th1} for algebraic groups.
The category of representations of the group may not be semisimple here,
so instead of an exceptional collection we get a semiorthogonal decomposition.
\begin{theorem}
\label{th1a}
Let $X$ be an algebraic variety over $\k$, let $G$ be an algebraic group
acting on $X$. Suppose there is a full
exceptional collection $(E_1, \dots, E_n)$ of sheaves in the category $\D(X)$. 
Suppose there is a $G$-equivariant structure on every sheaf $E_i$, 
denote corresponding $G$-sheaf as $\EE_i$.
Then the derived category of $G$-sheaves on $X$
admits a semiorthogonal decomposition:
$$\D^G(X)=\langle\EE_1\otimes\D(Repr(G)),\dots,
\EE_n\otimes\D(Repr(G))\rangle$$
where components are equivalent to the derived category $\D(Repr(G))$
 of finite dimensional representations of the group $G$ 
over $k$.
\end{theorem}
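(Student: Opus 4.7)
The plan is to verify the three hypotheses of Proposition~\ref{bondal}(1) for the subcategories $\SS_i := \EE_i\otimes\D(\mathrm{Repr}(G))\subset\D^G(X)$: each is right admissible and equivalent to $\D(\mathrm{Repr}(G))$, the collection $(\SS_1,\dots,\SS_n)$ is semiorthogonal, and its right orthogonal $\langle\SS_1,\dots,\SS_n\rangle^{\perp}$ inside $\D^G(X)$ vanishes. The first two requirements come essentially for free from the preceding lemmas; the generation statement is where the argument has real substance.

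Admissibility and the equivalence $\SS_i\cong\D(\mathrm{Repr}(G))$ are exactly the content of Lemma~\ref{lemma24}, which exhibits $F_{\EE_i}$ as fully faithful with both a left and a right adjoint. For semiorthogonality I want $\Hom_G(\SS_j,\SS_i)=0$ whenever $i<j$; by Lemma~\ref{lemma26} this reduces to vanishing of $\Ext^*(\EE_j,\EE_i)$ in $\D^G(X)$, and since these $\Ext$ groups are computed on the underlying sheaves, they agree with $\Ext^*(E_j,E_i)$, which vanishes because $(E_1,\dots,E_n)$ is an exceptional collection in $\D(X)$.

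The main step is to show that an object $\FF\in\D^G(X)$ that is right orthogonal to every $\SS_i$ must vanish. Via the adjunction of Lemma~\ref{lemma24}, the hypothesis rewrites as $\Hom^{\bullet}_{\D(\mathrm{Repr}(G))}(V,R\Hom(\EE_i,\FF))=0$ for every $V\in\D(\mathrm{Repr}(G))$ and every $i$, forcing $R\Hom(\EE_i,\FF)=0$ in $\D(\mathrm{Repr}(G))$ for each $i$. The forgetful functor from $\mathrm{Repr}(G)$ to vector spaces is exact and identifies the underlying complex of $R\Hom(\EE_i,\FF)$ with the ordinary $R\Hom(E_i,\Res\FF)$, so the same vanishing passes down to $\D(X)$ against every $E_i$. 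Since $(E_1,\dots,E_n)$ is \emph{full} in $\D(X)$, this forces $\Res\FF=0$, and conservativity of the forgetful functor $\D^G(X)\to\D(X)$ then yields $\FF=0$. The one delicate point deserving explicit care is that the adjunction of Lemma~\ref{lemma24} truly upgrades to an isomorphism of derived $\Hom$-complexes and that $R\Hom(\EE_i,\cdot)$ commutes with the forgetful functors on source and target; granted these standard identifications, Proposition~\ref{bondal}(1) assembles the three ingredients into the asserted semiorthogonal decomposition.
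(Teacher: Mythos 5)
Your proposal is correct and follows essentially the same route as the paper: admissibility and the equivalences come from Lemma~\ref{lemma24}, semiorthogonality from Lemma~\ref{lemma26} together with exceptionality of $(E_1,\dots,E_n)$, and generation from the adjunction plus fullness of the collection in $\D(X)$. The only cosmetic difference is that you phrase the generation step in contrapositive form (orthogonality forces $R\Hom(\EE_i,\FF)=0$, hence $\Res\FF=0$, hence $\FF=0$ by conservativity of the forgetful functor), whereas the paper starts from $\FF\ne 0$ and exhibits a nonzero $\Hom_G(\EE_i\otimes V,\FF)$ with $V=R\Hom(\EE_i,\FF)$; the ingredients are identical.
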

\begin{proof}
We already defined the subcategories $\EE_i\otimes\D(Repr(G))$,
we proved that they are equivalent to the category $\D(Repr(G))$ and 
semiorthogonal. It suffices to check that these categories generate
the whole category $\D^G(X)$. As we have seen, the categories 
$\EE_i\otimes\D(Repr(G))$ are right admissible. By proposition~\ref{bondal},
one needs to show that any object in $\D^G(X)$, right orthogonal to these 
categories, is zero. Indeed, take any $\FF\in\D^G(X),
\FF\ne 0$. The collection $(\EE_1,\dots,\EE_n)$ generates the derived category
$\D(X)$, so $\Hom^p(\EE_i,\FF)\ne 0$ for some numbers $i$ and $p\in\Z$.
From this we deduce that $R\Hom(\EE_i,\FF)\ne 0$. Let 
$V$ denote the object $R\Hom(\EE_i,\FF)$ of the category $\D(Repr(G))$.
The functors $\EE_i\otimes\cdot$ and $R\Hom(\EE_i,\cdot)$ are adjoint,
therefore $$\Hom_G(\EE_i\otimes V,\FF)=\Hom_G(V,R\Hom(\EE_i,\FF))\ne 0,$$
so we get a contradiction.
\end{proof}

\paragraph{The second theorem: case of exceptional collection 
of invariant sheaves in $\D(X)$.}
Now suppose that the action of $G$ preserves 
an exceptional sheaf $E$ on $X$ .
We will understand by this the following  
\begin{definition}
\label{def}
Let us say that 
the action of an algebraic group $G$ on a variety $X$ \emph{preserves} 
a simple (i.e. such that $\End\FF=\k$) sheaf $\FF$ on $X$ if
the sheaf $p_1^*\LL\otimes p_2^*\FF$ on $G\times X$ 
is isomorphic to $a^*\FF$ for some linear bundle $\LL$ on $G$.
In this situation we will also say that
the sheaf $\FF$ is \emph{invariant} under the action of $G$.
\end{definition}

{\bf Remark.} This definition makes sense for all sheaves, not only for
simple. But for sheaves $\FF$, such that $dim_\k \End \FF>1$, it seems to
be incorrect.

Obviously, any equivariant sheaf is invariant. 
Below we will explain the connection between the "naive"
definition of invariant sheaf from section~\ref{fingrp} and
definition~\ref{def}.

It turns out that any invariant exceptional sheaf
is a twisted equivariant sheaf with respect to some
cocycle on $G$.
\begin{predl}
Suppose that the action of a group $G$ on a variety $X$ preserves
an exceptional sheaf $E$ on $X$.
Then $E$ admits an $(\LL,\a)$-equivariant structure for some cocycle 
 $(\LL,\a)$ on $G$.
\end{predl}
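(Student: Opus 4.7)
The plan is to extract the cocycle data $(\LL,\a)$ by pulling back the given invariance isomorphism $\theta$ to $G\times G\times X$ in two different ways and identifying the discrepancy, via exceptionality of $E$, with a trivialization on $G\times G$.

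By definition~\ref{def}, fix an isomorphism $\theta\colon p_1^*\LL\otimes p_2^*E\ra a^*E$ on $G\times X$. On $G\times G\times X$ construct the two isomorphisms
$$\phi_1=(Id\times a)^*\theta\circ(Id\otimes p_{23}^*\theta)\colon p_1^*\LL\otimes p_2^*\LL\otimes p_3^*E\ra(a(Id\times a))^*E,$$
$$\phi_2=(\mu\times Id)^*\theta\colon p_{12}^*\mu^*\LL\otimes p_3^*E\ra(a(\mu\times Id))^*E.$$
Associativity of the action identifies the two codomains, so $\phi_2^{-1}\circ\phi_1$ is an isomorphism from $p_1^*\LL\otimes p_2^*\LL\otimes p_3^*E$ to $p_{12}^*\mu^*\LL\otimes p_3^*E$. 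Setting $\mathcal N=(\LL\boxtimes\LL)^{-1}\otimes\mu^*\LL$ on $G\times G$, this is equivalent to an isomorphism $p_3^*E\ra p_{12}^*\mathcal N\otimes p_3^*E$ on $G\times G\times X$.

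Since $E$ is exceptional, $R\Hom(E,E)=\k$, and Künneth applied to the projections $p_{12}$ and $p_3$ yields
$$\Hom(p_3^*E,p_{12}^*\mathcal N\otimes p_3^*E)=H^0(G\times G,\mathcal N)\otimes\Hom(E,E)=H^0(G\times G,\mathcal N).$$
Thus $\phi_2^{-1}\circ\phi_1$ corresponds to a nowhere vanishing global section of $\mathcal N$, equivalently an isomorphism $\a\colon p_1^*\LL\otimes p_2^*\LL\ra\mu^*\LL$. By construction $\phi_2\circ(p_{12}^*\a\otimes Id)=\phi_1$, which is precisely the compatibility axiom making $(E,\theta)$ into an $(\LL,\a)$-equivariant sheaf.

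It remains to verify that $\a$ satisfies the cocycle associativity condition of definition~\ref{defcoc} on $G\times G\times G$. The argument is the same one level up: on $G^3\times X$ the two composites in the associativity diagram for $\a$, each tensored with $p_4^*E$, coincide with the quadruple composite of pullbacks of $\theta$ by the compatibility just established. Exceptionality of $E$ and Künneth then identify their equality with the corresponding equality of global sections of a common line bundle on $G^3$, which therefore holds. The main obstacle is the Künneth-type identification ``$\Hom$ equals $\Gamma$'' in the central step; once the line bundle part is cleanly decoupled from the sheaf part using $R\Hom(E,E)=\k$, both the construction of $\a$ and the cocycle condition are formal consequences of iterated applications of $\theta$ and the simplicity of $E$.
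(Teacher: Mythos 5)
Your proposal is correct and follows essentially the same route as the paper's proof: both compare the two pullbacks of $\theta$ on $G\times G\times X$, use exceptionality of $E$ together with adjunction/projection formula (your K\"unneth step) to descend the discrepancy to an isomorphism $\a\colon p_1^*\LL\otimes p_2^*\LL\ra\mu^*\LL$ on $G\times G$, and then obtain the cocycle associativity by repeating the comparison on $G\times G\times G\times X$. No substantive differences.
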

\begin{proof}
Fix an isomorphism 
$\theta\colon p_1^*\LL\otimes  p_2^*E\ra a^*E$. There exists
a commutative diagram on
$G\times G\times X$ 
$$\xymatrix{
p_1^*\LL\otimes p_2^*\LL\otimes p_3^*E \ar[rr]^{Id\otimes p_{23}^*\theta} 
\ar[d]_{\a'}&&
p_1^*\LL\otimes (ap_{23})^*E \ar[rr]^{(Id\times a)^*\theta} &&
(a(Id\times a))^*E  \ar@{=}[d]\\
(\mu p_{12})^*\LL\otimes p_3^*E \ar[rrrr]^{(\mu \times Id)^*\theta} &&&&
(a(\mu \times Id))^*E,
}
$$
where 
$$\a'\colon
p_1^*\LL\otimes p_2^*\LL\otimes p_3^*E\ra
(\mu p_{12})^*\LL\otimes p_3^*E 
$$
is a certain isomorphism.
But the sheaf $E$ is exceptional, so $\a'$ is of the 
form $p_{12}^*\a\otimes Id$ for some isomorphism 
$\a\colon  p_1^*\LL\otimes p_2^*\LL\ra\mu^*\LL$ на $G\times G$.
Indeed, we have
\begin{multline*}
\Hom(p_1^*\LL\otimes p_2^*\LL\otimes p_3^*E,                                   
(\mu p_{12})^*\LL\otimes p_3^*E)=
\Hom(p_3^*E,
(p_1^*\LL\otimes p_2^*\LL)^{-1}\otimes(\mu p_{12})^*\LL\otimes p_3^*E)=\\
=\Hom(E,
p_{3*}((p_1^*\LL\otimes p_2^*\LL)^{-1}\otimes(\mu p_{12})^*\LL\otimes p_3^*E))=\\
=\Hom(E,
p_{3*}((p_1^*\LL\otimes p_2^*\LL)^{-1}\otimes(\mu p_{12})^*\LL)\otimes E)=\\
\quad\text{by theorem about flat base change}\\
=\Hom(E,H^0(G\times G,(p_1^*\LL\otimes p_2^*\LL)^{-1}\otimes
\mu^*\LL)\otimes E)=
\Hom(p_1^*\LL\otimes p_2^*\LL,\mu^*\LL).
\end{multline*}

Composing isomorphisms of sheaves on $G\times G\times G\times X$ in
two different ways, we get the associativity condition:
the isomorphisms
$$(Id \times \mu)^*\a\circ (Id\otimes p_{23}^*\a) \quad\text{and}\quad
(\mu \times Id)^*\a\circ (p_{12}^*\a\otimes Id)
$$
between sheaves
$p_1^*\LL\otimes p_2^*\LL\otimes p_3^*\LL$ and $(\mu(Id\times \mu))^*\LL$
on  $G\times G\times G$ are equal.

Hence, the pair $(\LL,\a)$ we got is a cocycle on the group $G$ 
in the sense of definition~\ref{defcoc}. 
\end{proof}

Now we can 
formulate and prove theorem~\ref{th2} for algebraic groups. 
The idea is the same: we produce equivariant sheaves from $E$, tensoring $E$ 
by different  $(\LL,\a)^{-1}$-representations of $G$.

Given a sheaf $E$ as above, we turn it into an $(\LL,\a)$-$G$-sheaf
that would be denoted $\EE$. 
Our aim is to construct an embedding $F_{\EE}$ of the category 
of finite dimensional $(\LL,\a)^{-1}$-representations of $G$
into the category of $G$-sheaves. Define a functor $F_{\EE}$ from
$Repr(G,\LL^*,\a^*)$ to $coh^G(X)$ by
$$F_{\EE}(V)=\EE\otimes V.$$
The sheaf $\EE$ is $(\LL,\a)$-equivariant, and $V$ is an 
$(\LL^*,\a^*)$-representation, therefore by proposition~\ref{lasheves} 
the sheaf 
$\EE\otimes V$ is a (nontwisted) equivariant $G$-sheaf.
The functor $F_{\EE}$ is exact, the derived functor from $F_{\EE}$ will be also 
denoted by $F_{\EE}$.
The proof of the following statement repeats the proof of lemma~\ref{lemma24}.
\begin{lemma}
\label{lemma210}
Functor $F_{\EE}\colon \D(Repr(G,\LL^*,\a^*))\ra\D^G(X)$
is fully faithful. The functors 
$R\Hom(\cdot,\EE)^*$ and  $R\Hom(\EE,\cdot)$ are left and right adjoint 
to $F_{\EE}$ respectively.
\end{lemma}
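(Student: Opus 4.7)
The plan is to mirror the proof of Lemma~\ref{lemma24} step by step, with the additional bookkeeping for the cocycles $(\LL,\a)$ and $(\LL^*,\a^*)$ supplied by proposition~\ref{lasheves}. First I would verify that for $V\in Repr(G,\LL^*,\a^*)$ and $\FF\in coh^G(X)$, the sheaf $\EE\otimes V$ really is an ordinary $G$-sheaf (so that $F_{\EE}$ is well-defined as a functor into $coh^G(X)$): this is immediate from proposition~\ref{lasheves}, since tensoring an $(\LL,\a)$-sheaf with an $(\LL^*,\a^*)$-representation yields an $(\LL\otimes\LL^*,\a\otimes\a^*)$-sheaf, and the latter cocycle is canonically trivial.

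Next I would write down the two canonical isomorphisms of vector spaces
$$\Hom(\FF,\EE\otimes V)\cong \Hom(\FF,\EE)\otimes V\cong \Hom(\Hom(\FF,\EE)^*,V),\qquad
\Hom(\EE\otimes V,\FF)\cong \Hom(V,\Hom(\EE,\FF)).$$
By proposition~\ref{lasheves}, $\Hom(\FF,\EE)$ carries a natural $(\LL,\a)$-representation structure and $\Hom(\EE,\FF)$ carries an $(\LL^*,\a^*)$-representation structure. Thus $\Hom(\FF,\EE)^*$ and $\Hom(\EE,\FF)$ are both $(\LL^*,\a^*)$-representations, and both sides of each isomorphism above acquire compatible (untwisted) $G$-actions. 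The key verification, which is the main technical step, is that these isomorphisms are $G$-equivariant with respect to the twisted actions — here one has to carefully trace through how the structure isomorphism $\theta$ of $\EE$ interacts with the representation isomorphisms on $V$ and with the diagonal action on $\Hom$ spaces, using that $\a\otimes\a^*$ is the trivialized cocycle.

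Taking $G$-invariants then yields adjunctions on the abelian level:
$$\Hom_G(\FF,\EE\otimes V)\cong \Hom_{G,(\LL^*,\a^*)}(\Hom(\FF,\EE)^*,V),\quad
\Hom_G(\EE\otimes V,\FF)\cong \Hom_{G,(\LL^*,\a^*)}(V,\Hom(\EE,\FF)).$$
Passing to derived categories by~\cite[lemma 15.6]{Ke} produces the claimed adjunctions between $R\Hom(\cdot,\EE)^*$, $F_{\EE}$, $R\Hom(\EE,\cdot)$.

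Finally, full faithfulness of $F_{\EE}$ follows from the exceptionality of $\EE$ exactly as in the proof of lemma~\ref{lemma24}: for $V,U\in \D(Repr(G,\LL^*,\a^*))$,
$$\Hom_G(\EE\otimes V,\EE\otimes U)=\Hom_{G,(\LL^*,\a^*)}(V,R\Hom(\EE,\EE)\otimes U)=\Hom_{G,(\LL^*,\a^*)}(V,U),$$
using $R\Hom(\EE,\EE)=\k$ with the trivial $(\O_G,1)$-representation structure. The main obstacle, as noted, is checking the $G$-equivariance of the canonical $\Hom$ isomorphisms in the twisted setting; everything else is a direct transcription of the proof for untwisted $\EE$.
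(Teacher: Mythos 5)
Your proposal is correct and follows essentially the same route as the paper: the author simply states that the proof of Lemma~\ref{lemma210} repeats that of Lemma~\ref{lemma24}, and your argument is exactly that repetition with the cocycle bookkeeping from Proposition~\ref{lasheves} (identifying $\Hom(\FF,\EE)^*$ and $\Hom(\EE,\FF)$ as $(\LL^*,\a^*)$-representations) made explicit.
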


Denote the image of $\D(Repr(G,\LL^*,\a^*))$ under embedding $F_{\EE}$
by $\EE\otimes \D(Repr(G,\LL^*,\a^*))$.

\begin{theorem}
\label{th2a}
Let $X$ be an algebraic variety over $\k$, let $G$ be an algebraic group
acting on $X$. Suppose there is a full
exceptional collection $(E_1, \dots, E_n)$ of sheaves in the category $\D(X)$. 
Suppose that the action of $G$ preserves every sheaf $E_i$, 
therefore $E_i$ admits an $(\LL_i,\a_i)$-equivariant structure for
some cocycle $(\LL_i,\a_i)$ of the group $G$. Denote corresponding
twisted equivariant sheaf by $\EE_i$. Then the category $\D^G(X)$
admits the semiorthogonal decomposition:
$$\D^G(X)=\left\langle \EE_1\otimes\D(Repr(G,\LL_1^*,\a_1^*)),
\dots, \EE_n\otimes\D(Repr(G,\LL_n^*,\a_n^*))\right\rangle.$$
\end{theorem}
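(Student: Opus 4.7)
The plan is to follow the pattern of the proof of Theorem~\ref{th1a}, with Lemma~\ref{lemma210} playing the role of Lemma~\ref{lemma24}. Lemma~\ref{lemma210} already supplies that each functor $F_{\EE_i}\colon \D(Repr(G,\LL_i^*,\a_i^*))\ra\D^G(X)$ is fully faithful and admits both a left and right adjoint, so each image $\EE_i\otimes\D(Repr(G,\LL_i^*,\a_i^*))$ is an admissible subcategory of $\D^G(X)$ equivalent to $\D(Repr(G,\LL_i^*,\a_i^*))$. By Proposition~\ref{bondal} it then suffices to verify semiorthogonality in the required order and to check that the common right orthogonal of these subcategories is zero.

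For semiorthogonality, for $i<j$, $V\in\D(Repr(G,\LL_j^*,\a_j^*))$ and $U\in\D(Repr(G,\LL_i^*,\a_i^*))$, I would apply the right adjoint from Lemma~\ref{lemma210} to write
$$\Hom_G(\EE_j\otimes V,\EE_i\otimes U)\cong \Hom(V,R\Hom(\EE_j,\EE_i\otimes U)),$$
and then use Proposition~\ref{lasheves} to identify $R\Hom(\EE_j,\EE_i\otimes U)\cong R\Hom(\EE_j,\EE_i)\otimes U$ as an $(\LL_j^*,\a_j^*)$-representation. The underlying complex of vector spaces of $R\Hom(\EE_j,\EE_i)$ is nothing but $R\Hom(E_j,E_i)$, which vanishes by exceptionality of the original collection $(E_1,\dots,E_n)$. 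Hence the whole Hom vanishes.

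For generation, I follow the argument of Theorem~\ref{th1a} verbatim. Take any nonzero $\FF\in\D^G(X)$. Forgetting the equivariant structure, since $(E_1,\dots,E_n)$ generates $\D(X)$ there exist indices $i$ and $p$ with $\Hom^p(E_i,\FF)\ne 0$, so the object $V:=R\Hom(\EE_i,\FF)$ in $\D(Repr(G,\LL_i^*,\a_i^*))$ is nonzero. Adjunction from Lemma~\ref{lemma210} then gives
$$\Hom_G(\EE_i\otimes V,\FF)\cong \Hom(V,R\Hom(\EE_i,\FF))=\Hom(V,V),$$
which contains the identity and is therefore nonzero. Thus $\FF$ is not right orthogonal to $\EE_i\otimes\D(Repr(G,\LL_i^*,\a_i^*))$, and the subcategories generate all of $\D^G(X)$.

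The argument is formally parallel to the one for Theorem~\ref{th1a}; the main point that requires care, rather than a genuine obstacle, is the bookkeeping of cocycles: one must check that the adjunction isomorphism and the identification $R\Hom(\EE_j,\EE_i\otimes U)\cong R\Hom(\EE_j,\EE_i)\otimes U$ are compatible with the various $(\LL,\a)$-structures, so that the calculations of invariants take place in the correct twisted representation category. Once this is tracked through Proposition~\ref{lasheves}, which distributes tensor products additively over cocycles, everything else goes through as in the untwisted case.
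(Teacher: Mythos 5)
Your proposal is correct and follows essentially the same route as the paper: admissibility and adjunctions from Lemma~\ref{lemma210}, semiorthogonality by the argument of Lemma~\ref{lemma26} (adjunction plus $R\Hom(E_j,E_i)=0$), and generation by showing the right orthogonal vanishes via $V=R\Hom(\EE_i,\FF)$ and $\Hom_G(\EE_i\otimes V,\FF)\cong\Hom_G(V,V)\ne 0$. The only cosmetic difference is that you spell out the cocycle bookkeeping that the paper leaves implicit.
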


\begin{proof}
The categories  $\EE_i\otimes\D(Repr(G,\LL_i^*,\a_i^*))$ are defined above.
Following the proof of lemma~\ref{lemma26} one can easily check
that these categories are semiorthogonal.
We need to show that categories
$\EE_i\otimes\D(Repr(G,\LL_i^*,\a_i^*))$ generate the category $\D^G(X)$. 
These categories are right admisible by lemma~\ref{lemma210}.
By proposition~\ref{bondal}, it suffices to prove that $\FF=0$ for any object
$\FF$ in $\D^G(X)$, right ortogonal to all categories 
$\EE_i\otimes\D(Repr(G,\LL_i^*,\a_i^*))$.
Assume that $\FF\ne 0$. The collection $(\EE_1,\dots,\EE_n)$  is full,
therefore for some $i$ and $p$ we'll have
$\Hom^p(\EE_i,\FF)\ne 0$. This implies $R\Hom(\EE_i,\FF)\ne 0$.
Denote by $V$ the object $\Hom(\EE_i,\FF)$ of the category 
$\D(Repr(G,\LL_i^*,\a_i^*))$. 
Since functors $\EE_i\otimes\cdot$ and $R\Hom(\EE_i,\cdot)$ are adjoint, 
we get:
$$\Hom_G(\EE_i\otimes V,\FF)=\Hom_G(V,R\Hom(\EE_i,\FF))=\Hom_G(V,V)\ne 0.$$
Hence, $\FF$ is not right orthogonal to 
$\EE_i\otimes\D(Repr(G,\LL_i^*,\a_i^*))$ and we get a contradiction. 
The theorem is proved.
\end{proof}

{\bf Remarks.} 1. Suppose that $G$ is a finite group, field $\k$ is
algebraically closed and  $char(\k)$ doesn't divide the order of $G$.
Then categories of finite dimensional representations of $G$ over $\k$
are semisimple (proposition~\ref{twistedalg}) and generated by
a finite number of orthogonal exceptional objects, namely
irreducible representations of the group. 
We see that theorems~\ref{th1} and~\ref{th2} follow from theorem~\ref{th2a}
in this special case.

2.  In the case $\k=\bar \k, char(\k)=0$ the category 
of finite dimensional representations of a reductive group $G$ 
is semisimple. The same is true for twisted representations for
a cocycle  $(\LL,\a)$ on a a reductive group $G$. Indeed,
the central extension $\~G$ of $G$ by a torus $\k^*$, corresponding
to the cocycle, is also reductive. The category $Repr(G,\LL,\a)$
is a direct summand in the semisimple category $Repr(\~G)$, hence the first
category is also semisimple. 
So, for action of a reductive group $G$, theorem~\ref{th2a}
gives not just a semiorthogonal decomposition of $\D^G(X)$,
but a full exceptional collection in $\D^G(X)$, 
consisting of blocks.

3. We may not require the field $\k$ to be algebraically
closed in  1 and 2. Then categories of representations of the group
would decompose into direct sums of categories of modules over
endomorphism algebras of irreducible representations.
This would result into semiorthogonal decomposition of $\D^G(X)$,
consisting of blocks, with subcategories equivalent to
the derived categories of vector spaces over division algebras over $\k$.

\paragraph{The third theorem: case of exceptional collection 
of invariant blocks of sheaves in $\D(X)$.}
As usual, $X$ denotes a variety and $G$\dash an algebraic group
acting on $X$. Let $H$ be an algebraic subgroup of $G$ of finite index.
Further we will need the notion of \emph{coinduced equivariant sheaf}, 
definition and some properties of coinduced sheaves are presented below:
\begin{predl}
\label{ind}
The forgetful functor $\Res_H^G$  from $coh^G(X)$ to $coh^H(X)$ has
a right adjoint functor, which will be denoted $\Ind_H^G$.
The functor $\Ind_H^G$ is exact.
There exists an exact triple of functors from
$coh^H(X)$ to $coh^H(X)$:
$$0\ra F'\ra \Res_H^G\Ind_H^G\ra Id\ra 0,$$
where $F'$ is some functor. If $G$ is a union of right cosets
modulo $H$, then
$F'(\cdot)=\oplus_{g\in J}g^*(\cdot)$, where $g$ runs through
a set $J$ of representatives in $G(\k)$ of nontrivial right cosets.
\end{predl}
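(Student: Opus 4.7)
The plan is to realize $\Ind_H^G$ as the pushforward along a finite flat morphism from the ``balanced product'' $G\times^H X$. Let $H$ act freely on $G\times X$ via $h\cdot(g,x)=(gh^{-1},hx)$ and form the geometric quotient $Y:=G\times^H X$; it exists as a scheme of finite type because $H$ acts freely on the first factor. The left $G$-action on $G$ descends to a $G$-action on $Y$. The action map $a\colon G\times X\to X$, $(g,x)\mapsto gx$, is $H$-invariant and descends to a $G$-equivariant morphism $\bar a\colon Y\to X$ whose fibres are $H\backslash G$; since $[G:H]<\infty$, the map $\bar a$ is finite and flat. Given an $H$-sheaf $\FF$ on $X$, the projection $p_2\colon G\times X\to X$ is $H$-equivariant with respect to the above action, so $p_2^*\FF$ carries a tautological $H$-equivariant structure. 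Freeness lets it descend to a sheaf $\tilde\FF$ on $Y$ with a canonical $G$-equivariant structure inherited from the $G$-action on $G$. I set $\Ind_H^G(\FF):=\bar a_*\tilde\FF$.

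For adjointness, combining the $(\bar a^*,\bar a_*)$-adjunction with faithfully flat descent along the free quotient $G\times X\to Y$ and with $H$-equivariance of $p_2$ gives natural isomorphisms
\begin{align*}
\Hom_G(\GG,\bar a_*\tilde\FF)
&\cong\Hom_G(\bar a^*\GG,\tilde\FF) \\
&\cong\Hom_H(p_2^*\GG,p_2^*\FF) \\
&\cong\Hom_H(\Res_H^G\GG,\FF)
\end{align*}
for every $\GG\in coh^G(X)$, proving that $\Ind_H^G$ is right adjoint to $\Res_H^G$. Exactness is immediate: $\bar a_*$ is exact on coherent sheaves because $\bar a$ is finite flat, and descent along a free quotient is exact as well.

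For the exact triple, suppose $G=\sqcup_{g\in J\cup\{e\}}Hg$ with coset representatives in $G(\k)$. Flat base change of $\bar a$ against translations by coset representatives decomposes the fibre product into a disjoint union indexed by $J\cup\{e\}$, which yields, as an underlying sheaf,
$$\Res_H^G\Ind_H^G(\FF)\;\cong\;\bigoplus_{g\in J\cup\{e\}}g^*\FF.$$
The counit $\Res_H^G\Ind_H^G\FF\to\FF$ of the adjunction restricts to the identity on the summand indexed by $g=e$ and annihilates the others, so its kernel is precisely $F'(\FF):=\bigoplus_{g\in J}g^*\FF$, giving the required short exact sequence.

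The main obstacle lies in Step~4: individually the sheaves $g^*\FF$ for $g\notin H$ carry no canonical $H$-equivariant structure (only a $g^{-1}Hg$-equivariant one), so the displayed decomposition must be read as an identification of underlying sheaves, with the global $H$-action permuting and twisting the summands according to right multiplication on $H\backslash G$. The reason the summand at $g=e$ splits off as a genuine $H$-subsheaf is that the coset $He=H$ is the unique fixed point of the right $H$-action on $H\backslash G$; making this bookkeeping precise and checking that the counit really is projection onto this distinguished summand is where the content of the proof sits, whereas the geometric ingredients (representability of $Y$, finite flatness of $\bar a$, descent along $G\times X\to Y$) are all standard.
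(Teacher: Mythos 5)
Your construction is essentially the paper's own proof: the author also realizes $\Ind_H^G$ as $\bar a_*$ applied to the descent of $p_2^*\FF$ along the free $H$-quotient $G\times X\to G\times^H X$ (phrased there as a chain of equivalences $coh^H(X)\cong coh^{G\times H}(G\times X)\cong coh^G(G\times^H X)$), obtains adjointness from $(\bar a^*,\bar a_*)$, and derives the splitting $\bigsqcup_{g\in J}X_g$ to identify $F'$ with the sum over nontrivial cosets. The only delta is cosmetic bookkeeping ($g_*$ versus $(g^{-1})^*$ in indexing the summands), which both you and the paper handle correctly.
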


\begin{proof}
Since it is hard to find a good reference, we give a proof.

Consider a commutative diagram of varieties with group actions:
$$\xymatrix{
_HX &  _{G\times H}G\times X \ar[l]_-{p_2} \ar[r]^{\pi}  &  _GG\times ^HX \ar[r]^-{\bar a}
\ar @{=}[d] & _G X.\\
&& _G \bigsqcup_{g\in J}X_g \ar@/^/[ull]^{\bar p=\sqcup Id_X} \ar@/_/[ur]_-{\sqcup g}
}
$$
It gives a  diagram of categories and functors
$$\xymatrix{
coh^H(X) \ar[r]^-{p_2^*}_-{\sim} \ar@/_/[drr]_{\bar p^*}^{\sim}&
  coh^{G\times H}(G\times X)    &  
  coh^G(G\times ^HX) \ar[l]_-{\pi^*}^-{\sim}   \ar@<0.5ex>[rr]^-{\bar a_*=\Ind}
\ar @{=}[d] && coh^G(X) \ar@<0.5ex>[ll]^-{\bar a^*=\Res}.\\
&& coh^G (\bigsqcup X_g)  
}
$$
Comment this diagram.
Variety $G\times X$ is supplied with action of $G\times H$ by formula 
$(g',h)\cdot (g,x)=(g'gh^{-1},hx)$.
Projection $p_2\colon G\times X\ra X$ is a quotient map of the free (left)
action of $G$ on $G\times X$. Since $p_2$ is $H$-equivariant, it  
gives an equivalence between categories $coh^H(X)$ and 
$coh^{G\times H}(G\times X)$.
Map $\pi$ is a projection of $G\times X$ onto it's quotient by the free action 
of $H$.
Since $\pi$ is $G$-equivariant, $\pi^*$ is an equivalence 
$coh^G(G\times ^HX)\ra coh^{G\times H}(G\times X)$.
The map~$\bar a$ is induced by the action morphism $a\colon G\times X\ra X$.
It is $G$-equivariant, hence there are adjoint functors
$\bar a^*\colon coh^G(X)\ra coh^G(G\times ^HX)$
and
$\bar a_*\colon  coh^G(G\times ^HX)\ra coh^G(X)$.
The above equivalence between
$coh^H(X)$ and $coh^G(G\times ^HX)$ sends the functor $\Res_H^G$ 
into $\bar a^*$, and $\bar a_*$ corresponds to a desired $\Ind_H^G$.

Now suppose $G/H$ has a set $J$ of representatives in $G(\k)$.
Then there is an isomorphism (depending on $J$)
$G\times ^HX$ с $\bigsqcup_{g\in J}X_g$, where $X_g=X$.
Therefore we may consider  $\bar a$ to be equal to the 
action of $g$ on the component $X_g$. And for the set of identity
morphisms $\bar p\colon \bigsqcup X_g\ra X$ we'll have $\bar p\pi=p_2$.
We obtain that
$$\Ind_H^G\FF=\bar a_*\bar p^*\FF=\bigoplus_{g\in J}g_*\FF=
\bigoplus_{g\in J^{-1}}g^*\FF.$$
Note that the sum over nontrivial cosets is an $H$-invariant subsheaf,
this implies the statement about $F'$.
\end{proof}

Morally, the forgetful functor $\Res_H^G$
is a pullback for the morphism of stacks 
$X/\!\!/H\ra X/\!\!/G$, and it's right adjoint is a 
pushforward.

The derived functor of the exact functor $\Ind_H^G$ will be denoted
by $\Ind_H^G$.

\begin{theorem}
\label{th3a}
Let  $X$ be an algebraic variety over the field $\k$
and  $G$ be an algebraic group acting on $X$.
Suppose the category $\D(X)$ has a full exceptional collection
of sheaves, consisting of blocks:

\begin{equation}
\label{coll3a}
\left(
\begin{array}{clclc}
E^{(1)}_1 && E^{(2)}_1 && E^{(n)}_1 \\
\vdots & , & \vdots & ,   \dots , & \vdots  \\
E^{(1)}_{k_1} && E^{(2)}_{k_2} && E^{(n)}_{k_n}
\end{array}
\right) ,
\end{equation}
such that the group $G(\k)$ transitively permutes the sheaves in each block. 
Suppose that there exist subgroups $H_i\subset G$ of finite index,
satisfying the foolowing conditions:
the right cosets 
$H_i\setminus G$ have representatives in $G(\k)$,
the sheaf $E^{(i)}_1$ is invariant under the action of $H_i$ and
$H_i(\k)$ is a stabilizer of the sheaf $E^{(i)}_1$ in $G(\k)$.
For some cocycle $(\LL_i,\a_i)$ on the group $H_i$ there is an
equivariant structure on the sheaf 
$E^{(i)}_1$, denote corresponding 
$(\LL_i,\a_i)$-$H_i$-sheaf by $\EE^{(i)}$.
Then the category $\D_G(X)$ admits a semiorthogonal decomposition
\begin{equation}
\label{decomp3a}
D_G(X)=\left\langle \Ind_{H_1}^G(\EE^{(1)}\otimes\D(Repr(H_1,\LL_1^*,\a_1^*))),
\dots, \Ind_{H_n}^G(\EE^{(n)}\otimes\D(Repr(H_n,\LL_n^*,\a_n^*)))\right
\rangle,
\end{equation}
where subcategory 
$\Ind_{H_i}^G(\EE^{(i)}\otimes\D(Repr(H_i,\LL_i^*,\a_i^*)))$
is equivalent to the category 
$\D(Repr(H_i,\LL_i^*,\a_i^*))$.
\end{theorem}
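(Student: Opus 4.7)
The plan mirrors Theorem~\ref{th2a}, inserting the induction $\Ind_{H_i}^G$ after tensoring with $\EE^{(i)}$. Concretely, I would define
$$F_i\colon \D(Repr(H_i,\LL_i^*,\a_i^*))\ra\D^G(X),\qquad V\mapsto \Ind_{H_i}^G(\EE^{(i)}\otimes V).$$
By Proposition~\ref{lasheves} the intermediate object $\EE^{(i)}\otimes V$ is a genuine (untwisted) $H_i$-equivariant sheaf, so $F_i$ really lands in $\D^G(X)$. The three things to verify are: each $F_i$ is fully faithful, the subcategories $F_i(\D(Repr(H_i,\LL_i^*,\a_i^*)))$ are semiorthogonal in the stated order, and together they generate $\D^G(X)$.

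For full faithfulness I would use the adjunction $\Res_{H_i}^G\dashv\Ind_{H_i}^G$ of Proposition~\ref{ind} to rewrite
$$\Hom_G(F_iU,F_iV)=\Hom_{H_i}\bigl(\Res_{H_i}^G\Ind_{H_i}^G(\EE^{(i)}\otimes U),\EE^{(i)}\otimes V\bigr),$$
and then invoke the exact triple $0\ra F'\ra\Res_{H_i}^G\Ind_{H_i}^G\ra Id\ra 0$ of the same proposition. The identity term computes by Lemma~\ref{lemma210} to $\Hom_{H_i}(\EE^{(i)}\otimes U,\EE^{(i)}\otimes V)=\Hom(U,V)$, while each $F'$-summand is of the form $\Hom_{H_i}(g^*(\EE^{(i)}\otimes U),\EE^{(i)}\otimes V)$ for a representative $g$ of a nontrivial coset in $H_i\setminus G$. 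Since the underlying sheaf of $g^*\EE^{(i)}$ is another block member $E^{(i)}_{\sigma(g)}$ with $\sigma(g)\ne 1$, and distinct sheaves within a block are $\Ext$-orthogonal, every $F'$-summand vanishes, giving $\Hom_G(F_iU,F_iV)=\Hom(U,V)$. The analogous computation for $i<j$ expresses $\Hom_G(F_jV,F_iU)$ as a sum over cosets of terms controlled by $\Hom^*(E^{(j)}_{\tau(g)},E^{(i)}_l)$, which vanish by semiorthogonality of the original block collection.

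For generation I invoke the reverse adjunction direction (available because $\bar a\colon G\times^{H_i}X\ra X$ in the proof of Proposition~\ref{ind} is finite \'etale, so $\Ind_{H_i}^G=\bar a_*$ also has a left adjoint, equal to $\Res_{H_i}^G$), yielding
$$\Hom_G(F_iV,\FF)=\Hom_{H_i}(\EE^{(i)}\otimes V,\Res_{H_i}^G\FF)=\Hom\bigl(V,R\Hom(\EE^{(i)},\Res_{H_i}^G\FF)\bigr)$$
by Lemma~\ref{lemma210}. Given $\FF\ne 0$ in $\D^G(X)$, fullness of $(E^{(i)}_j)$ in $\D(X)$ combined with $G$-equivariance of $\FF$ (using $E^{(i)}_j\cong g^*E^{(i)}_1$ and $g^*\FF\cong\FF$) forces $R\Hom(E^{(i)}_1,\FF)\ne 0$ for some $i$, hence $R\Hom(\EE^{(i)},\Res_{H_i}^G\FF)\ne 0$ since the forgetful functor $\D(Repr(H_i,\LL_i^*,\a_i^*))\ra\D(\k)$ is conservative; taking $V$ equal to this object produces a nonzero morphism and, via Proposition~\ref{bondal}, finishes the proof. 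The main obstacle I anticipate is the bookkeeping of the twisted $(\LL_i,\a_i)$-equivariant structure after pullback by $g\notin H_i$: one must verify that the identification of $g^*\EE^{(i)}$ with $E^{(i)}_{\sigma(g)}$ up to a unit twist, combined with block orthogonality of the underlying sheaves, genuinely kills the $H_i$-equivariant Hom in each $F'$-summand, as the twisted structures on both sides do not obviously match before one performs this identification.
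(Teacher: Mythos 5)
Your proposal is correct and follows essentially the same route as the paper: embed each $\D(Repr(H_i,\LL_i^*,\a_i^*))$ via $V\mapsto\Ind_{H_i}^G(\EE^{(i)}\otimes V)$, prove full faithfulness and semiorthogonality from the triangle $F'\ra\Res\Ind\ra Id$ together with orthogonality of the underlying sheaves inside and between blocks (which, exactly as you suspected, kills the equivariant $\Hom$'s regardless of the twisted structures, since the non-equivariant $\Hom$-complexes computing them already vanish), and prove generation by adjunction plus fullness of the original collection. The only divergence is in the last step, where you show the \emph{right} orthogonal vanishes using a right adjoint of $\Ind_{H_i}^G$ (justified by your finite-\'etale observation, which is true but not established in Proposition~\ref{ind}), whereas the paper shows the \emph{left} orthogonal vanishes using only the stated left adjoints $\Res_{H_i}^G$ and $R\Hom(\cdot,\EE^{(i)})^*$; both variants are valid.
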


\begin{proof}
Before proof of theorem~\ref{th2a} we constructed embeddings
$F_{\EE_i}\colon \D(Repr(H_i,\LL_i^*,\a_i^*))$ into $\D^{H_i}(X)$, these functors
have left adjoint. We denoted the images of $F_{\EE_i}$ by
$\EE^{(i)}\otimes\D(Repr(H_i,\LL_i^*,\a_i^*))$. Let's
check that the functor $\Ind_{H_i}^G$ is a fully faithful embedding
of the category
$\EE^{(i)}\otimes\D(Repr(H_i,\LL_i^*,\a_i^*))$ into $\D^G(X)$.
Take $V,U\in\D(Repr(H_i,\LL_i^*,\a_i^*))$. Then we have ($i$, $G$ and $H$ 
are omitted for clearer notation):
$$
\Hom_G(\Ind(\EE\otimes V),\Ind(\EE\otimes U))=
\Hom_H(\Res\Ind(\EE\otimes V),\EE\otimes U)=
\Hom_H(\EE\otimes V,\EE\otimes U).
$$
To prove the second equality we use the exact triple
of functors (see proposition~\ref{ind})
$0\ra F' \ra \Res\Ind \ra Id\ra 0$ and vanishing of
$\Hom^i_H(F'(\EE\otimes V),\EE\otimes U)=
\Hom^i_H(\oplus_{g\in J}g^*(\EE\otimes V),\EE\otimes U)$
for $i=-1$ and $0$.
Indeed,
$\Hom_H(\cdot,\cdot)=\Hom_H(\k,\Hom(\cdot,\cdot))$, where
$\k$ stands for trivial representation of the group $H$.
There exists a spectral sequence with 
$E_2^{pq}=\Ext_H^q(\k,\Hom^p(\oplus_{g\in J}g^*(\EE\otimes V),\EE\otimes U))$
that calculates
$E^n=\Hom^n(\oplus_{g\in J}g^*(\EE\otimes V),\EE\otimes U)$.
But $\Hom^p(g^*(\EE\otimes V),\EE\otimes U)=0$ for $g\in J$, because
$\EE$ and $g^*\EE$ are different exceptional sheaves from the same
block in~(\ref{coll3a}) by hypothesis.

Now let's show that subcategories in decomposition~(\ref{decomp3a})
are semiorthogonal.
Suppose that $i<j$, 
$V\in \D(Repr(H_i,\LL_i^*,\a_i^*)), U\in \D(Repr(H_j,\LL_j^*,\a_j^*))$,
then
$$\Hom_G(\Ind_{H_j}^G(\EE^{(j)}\otimes U),\Ind_{H_i}^G(\EE^{(i)}\otimes V))=
\Hom_{H_i}(\Res_{H_i}^G\Ind_{H_j}^G(\EE^{(j)}\otimes U),\EE^{(i)}\otimes V)=0.$$
Indeed,  the spaces
$\Hom^n_{H_i}(\Res_{H_i}^G\Ind_{H_j}^G(\EE^{(j)}\otimes U),\EE^{(i)}\otimes V)$
can be found using the spectral sequence with 
$E_2^{pq}=
\Ext_{H_i}^q(\k,\Hom^p(\Ind_{H_j}^G(\EE^{(j)}\otimes U),\EE^{(i)}\otimes V))$.
But in this sequence
$\Hom^p(\Ind_{H_j}^G(\EE^{(j)}\otimes U),\EE^{(i)}\otimes V)=0$,
because $F'(\EE^{(j)}\otimes U)\cong
(\oplus_{l=2}^{k_j}E^{(j)}_l)\otimes U$, $\EE^{(j)}\otimes U\cong
E^{(j)}_1 \otimes U$ 
(as objects in $\D(X)$) and the sheaves $E^{(j)}_l$ are left
orthogonal to $\EE^{(i)}=E^{(i)}_1$ by hypothesis.

Finally, we check that the triangulated category,
generated by~(\ref{decomp3a}), coincides with $\D^G(X)$. 
As we noticed before, the functor
 $F_{\EE_i}\colon \D(Repr(H_i,\LL_i^*,\a_i^*))\ra \D^{H_i}(X)$ and
the coinducing functor
$\Ind_{H_i}^G\colon \D^{H_i}(X)\ra\D^G(X)$
have left adjoint. Therefore, the subcategories
$\Ind_{H_i}^G(\EE^{(i)}\otimes\D(Repr(H_i,\LL_i^*,\a_i^*)))$
are left admissible. It suffices to check that the left orthogonal
to the right-hand side category in~(\ref{decomp3a}) vanishes.
Take any $\FF\in\D^G(X), \FF\ne 0$. The collection~(\ref{coll3a})  
generates the category $\D(X)$, so for some $i,p$ and $l$
we'll have
$\Hom^p(\FF, E^{(i)}_l)\ne 0$. We may consider the case $l=1$ because $G(\k)$ 
permutes the sheaves transitively in blocks in~(\ref{coll3a}). 
Hence, $R\Hom(\FF,\EE^{(i)})\ne 0$.
Denote by $V$ the object $R\Hom(\FF,\EE^{(i)})^*$ of the category 
$\D(Repr(H_i,\LL_i^*,\a_i^*))$.
Then (because $R\Hom(\cdot,\EE^{(i)})^*$ and $F_{\EE_i}$ are adjoint)
we have:
\begin{multline*}
\Hom_G(\FF,\Ind_{H_i}^G(\EE_i\otimes V))=
\Hom_{H_i}(\FF,\EE_i\otimes V)=\\
=\Hom_{H_i}(R\Hom(\FF,\EE_i)^*,V)=\Hom_{H_i}(V,V)\ne 0.
\end{multline*}
We get a contradiction because $\FF$ is left orthogonal to
the category 
$\Ind_{H_i}^G(\EE^{(i)}\otimes\D(Repr(H_i,\LL_i^*,\a_i^*)))$.
Applying proposition~\ref{bondal} finishes the proof.
\end{proof}

\paragraph{If points of a group preserve an exceptional sheaf
then the sheaf is invariant in the sense of definition~\ref{def}.}

In section~\ref{fingrp} we said that the action of
a finite group $G$ preserves a sheaf $\FF$ if $g^*\FF\cong\FF$
for any $g\in G$. But this definition doesn't work for 
algebraic groups because the group may have few rational points.
Definition~\ref{def} is more reasonable:
a sheaf $\FF$ on $X$ is preserved by the action of a group $G$
if there exists a linear bundle $\LL$ on $G$ such that the sheaves
$p_1^*\LL\otimes p_2^*\FF$ and $a^*\FF$ on $G\times X$ are isomorphic.
Below we relate this condition with invariance of the sheaf under
the action of distinct points of group. Thus we obtain a criterion
for checking definiton~\ref{def}.

\begin{predl}
Let $G$ be an algebraic group acting on a variety $X$. Suppose that $\FF$ is 
an exceptional coherent sheaf on $X$
and for any rational
point $g\in G(\k)$ we have $g^*\FF\cong \FF$. 
Suppose that the above conditions on a sheaf $\FF$ hold
for any finite extension of the field $\k$. 
Then the sheaf $\FF$ is invariant under the action of the group
in the sense of definition~\ref{def}.
\end{predl}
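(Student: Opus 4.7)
The plan is to construct the line bundle $\LL$ on $G$ as $\LL := p_{1*}\HHom(p_2^*\FF, a^*\FF)$, and then verify that the canonical evaluation morphism $p_1^*\LL \otimes p_2^*\FF \to a^*\FF$ is an isomorphism. Geometrically, since $g^*\FF \cong \FF$ at each point but not canonically so, the ambiguity in choosing such an isomorphism is a copy of $\k^* = \mathrm{Aut}(\FF)$ parametrized by $G$, and this data assembles into a $\G_m$-torsor over $G$, equivalently a line bundle.

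First I would analyze the fibers of the sheaf $\mathcal{K} := \HHom(p_2^*\FF, a^*\FF)$. Both $p_2^*\FF$ and $a^*\FF$ are flat over $G$, since $p_2$ and $a$ are flat morphisms. For each closed point $g \in G$ with residue field $\kappa(g) = \k'$, a finite extension of $\k$, the restriction of $\mathcal{K}$ to $\{g\} \times X_{\k'}$ equals $\HHom_{X_{\k'}}(\FF_{\k'}, g^*\FF_{\k'})$. By the hypothesis applied over $\k'$, one has $g^*\FF_{\k'} \cong \FF_{\k'}$, and since exceptionality is preserved under extension of scalars, $\Hom_{X_{\k'}}(\FF_{\k'}, g^*\FF_{\k'}) \cong \End(\FF_{\k'}) \cong \k'$ is one-dimensional.

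Next I would apply cohomology and base change to the projection $p_1 : G \times X \to G$ together with $\mathcal{K}$. Constant fiber dimension equal to one yields that $\LL := p_{1*}\mathcal{K}$ is a line bundle on $G$ whose formation commutes with arbitrary base change, and that the adjunction unit $p_1^*\LL \to \mathcal{K}$ is an isomorphism. The canonical evaluation $\mathrm{ev} : p_1^*\LL \otimes p_2^*\FF \to a^*\FF$ is then verified fiberwise: over each closed point $g$, it becomes a nonzero morphism between two copies of the same simple (exceptional) sheaf, hence an isomorphism. Nakayama's lemma upgrades this to a global isomorphism, proving invariance in the sense of definition~\ref{def}.

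The principal technical obstacle lies in the second step: since $X$ is not assumed proper, $p_{1*}\mathcal{K}$ need not a priori be coherent, and the standard Grauert-type base change theorems do not apply verbatim. I would circumvent this either by invoking properness of $X$ in the geometric cases of interest, or, more intrinsically, by replacing the pushforward construction with the relative $\mathrm{Isom}$-functor $\underline{\mathrm{Isom}}_G(p_2^*\FF, a^*\FF)$: the hypothesis (for all finite extensions of $\k$) guarantees non-emptiness above every closed point of $G$, while exceptionality forces $\mathrm{Aut}(\FF) = \k^*$, so this $\mathrm{Isom}$-functor is represented by a $\G_m$-torsor on $G$, equivalently a line bundle $\LL$. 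The remaining verification that $p_1^*\LL \otimes p_2^*\FF \cong a^*\FF$ is then essentially formal.
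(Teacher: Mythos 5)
Your construction is essentially the paper's own proof: the author likewise sets $\LL = p_{1*}\HHom(p_2^*\FF,a^*\FF)$, uses flat base change and a spectral sequence to show that $Rp_{1*}R\HHom(p_2^*\FF,a^*\FF)$ is an invertible sheaf concentrated in degree $0$ (the derived formulation is what makes the fiberwise computation $R\Hom(\FF',g^*\FF')=\k(g)[0]$ over each closed point legitimate, since $\HHom$ and $p_{1*}$ do not commute with restriction to fibers on the nose), and then checks that the evaluation map $p_1^*\LL\otimes p_2^*\FF\to a^*\FF$ is an isomorphism fiberwise. One caution: your intermediate claim that the unit $p_1^*\LL\to\HHom(p_2^*\FF,a^*\FF)$ is an isomorphism is false whenever $\FF$ has proper support (e.g.\ $\FF=\O_{E}(-1)$ on a blown-up surface, where $\HHom(\FF,\FF)=\O_{E}\neq\O_X$), but your argument does not actually rely on it, and your observation about properness of $X$ being needed for coherence of the pushforward is a real point that the paper passes over in silence.
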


\begin{proof}
Consider the sheaf $\HHom(p_2^*\FF,a^*\FF)$ on $G\times X$.
We claim that the object 
$$Rp_{1*}R\HHom(p_2^*\FF,a^*\FF)$$ of derived category
is  an invertible sheaf on $G$, placed into degree $0$.

Let $g$ be a closed point of the scheme $G$. Consider a Cartesian square
$$
\begin{CD}
X_g @>i>>  G\times X \\
@VVpV  @VV{p_1}V \\
g @>i>> G.
\end{CD}
$$

Since $G\times X$ is smooth, $p_2^*\FF$ is a perfect complex,
and the sheaves $p_2^*\FF$,  $a^*\FF$ are flat over $G$, we have:
\begin{multline*}
Li^*R\HHom(p_2^*\FF,a^*\FF)=\O_{X_g}\otimes^L(p_2^*\FF)^*\otimes^L a^*\FF=
(p_2^*\FF|_{X_g})^*\otimes^L a^*\FF|_{X_g}=\\
=R\HHom(p_2^*\FF|_{X_g},a^*\FF|_{X_g})=R\HHom(\FF',g^*\FF'),
\end{multline*}
where $\FF'$ denotes a sheaf on $X_g$, obtained from $\FF$ by 
scalar extension.
Further, 
$$Rp_*Li^*R\HHom(p_2^*\FF,a^*\FF)=R\Hom(\FF',g^*\FF')=\k(g)[0].$$
Let $\mathrm{H}^{\bullet}$ denote the cohomologies of a complex.
Consider a spectral sequence of vector spaces
$$E_2^{pq}=L_{-q}i^*\mathrm{H}^p(Rp_{1*}R\HHom(p_2^*\FF,a^*\FF))$$
with differential $d_2$ of degree  $(-1,2)$, whose limit is 
$$E_{\infty}^n=\mathrm{H}^n(Li^*Rp_{1*}R\HHom(p_2^*\FF,a^*\FF)),$$
By the flat base change, this equals
$$\mathrm{H}^n(Rp_*Li^*R\HHom(p_2^*\FF,a^*\FF))=
\begin{cases}
\k(g)&\text{when $n=0$,}\\
0&\text{otherwise.}
\end{cases}
$$

Let $\mathrm{H}^p$ be the highest nonzero cohomology of 
$Rp_{1*}R\HHom(p_2^*\FF,a^*\FF)$. If $p>0$, the spectral
sequence implies $E_2^{p0}=E_{\infty}^p=0$,
i.e. $i^*\mathrm{H}^p=0$ for an immersion of any closed point $g$.
Then the sheaf $\mathrm{H}^p$ is zero, a contradiction. Thus $p=0$
and $E_2^{00}=E_{\infty}^0=\k(g)$, the vector space $i^*\mathrm{H}^0$
is one-dimensional for all $g\in G$. This implies that
$\mathrm{H}^0=\LL$ is a linear bundle on $G$.

The functor $p_{1*}$ is left exact, therefore
$$p_{1*}\HHom(p_2^*\FF,a^*\FF)=\mathrm{H}^0(Rp_{1*}R\HHom(p_2^*\FF,a^*\FF))
=\LL.$$
Now consider the composition of homomorphisms
$$p_1^*\LL\otimes p_2^*\FF\ra \HHom(p_2^*\FF,a^*\FF)\otimes p_2^*\FF\ra
a^*\FF,$$
easily, it is an isomorphism.
\end{proof}

\section{Applications.}
Theorems from the previous section can be applyed to different varieties,
in particular, to projective spaces, quadrics, Grassmanians and
del Pezzo surfaces of degree $d\ge 5$.
As a result, we obtain semiorthogonal decompositions
of equivariant derived categories on those varieties with action of
an algebraic group.

\subsection{Projective spaces.}
\label{proj}

Let $V$ be an $n$-dimensional vector space over a field $\k$,
and $\P(V)\cong\P^{n-1}_{\k}$ be it's projectivization.
Consider a group $G$ acting on $\P(V)$.
The category $\D(\P^{n-1})$ has a full exceptional collection
$$(\O,\O(1),\dots,\O(n-1)).$$
Obviously, this collection is preserved by any automorphism of
$\P^{n-1}$, so theorem~\ref{th2a} will work here.                              

The sheaf $\O(-1)$ is a twisted $G$-sheaf with respect to some cocycle
on $G$. We construct this cocycle explicitly.
Consider the fibered product $\~G=G\times_{PGL(V)}GL(V)$, denote
by $\pi$ the projection of $\~G$ on $G$. 
Then $\~G$ is a principal 
$\G_m$-bundle over $G$. Denote the corresponding linear bundle
on $G$ by $\LL$. Multipliciation in $\~G$ gives us a cocycle structure 
on $\LL$, let's call this cocycle $(\LL,\a)$. 
In other words, the group $\~G$ is a central extension of $G$ by~$\G_m$, 
corresponding to cocycle $(\LL,\a)$.

Clearly, the group $\~G$ maps to $G$ and thus acts on $\P(V)$.
On the other hand, there is a tautological representation of 
$\~G$ in $V$, defined by projection of 
$\~G$ into $GL(V)$.
The equivariant $\~G$-bundle
$\O_{\P(V)}\otimes V$ on~$\P(V)$ has an invariant subbundle
$\O_{\P(V)}(-1)$. Therefore the bundle $\O(-1)$ is a $\~G$-bundle
on~$\P(V)$. Note that the action of the subgroup 
$\G_m=\pi^{-1}(e)\subset\~G$ on $\O(-1)$ is linear.
By proposition~\ref{aboutHH},  there is a canonical 
$(\LL,\a)$-representation of $G$ in the vector space $V$ 
and a canonical structure of an $(\LL,\a)$-sheaf on~$\O(-1)$.
Proposition~\ref{lasheves} shows that the bundles $\O(k)$ are
$(\LL,\a)^{-k}$-equivariant bundles on $\P(V)$.    

Alternatively, we may consider the group $\bar G=G\times_{PGL(V)}SL(V)$.
It is a closed subgroup in $\~G$ and it is an extension of 
the group $G$ by the algebraic group $\mu_n$. One can check that
$\bar G$ is a finite extension of $G$ from proposition~\ref{aboutHH1}.

Summing up theorem~\ref{th2a} and propositions~\ref{aboutHH} 
and~\ref{aboutHH1}, we get 

\begin{theorem}
Let $G$ be an algebraic group acting on a projective space
$\P(V)$, let $(\LL,\a)$ be a cocycle on $G$, corresponding
to the action of $G$ on the sheaf $\O(-1)$. 
Then the derived category of $G$-sheaves on $\P(W)$ 
admits the following semiorthogonal:
\begin{multline}
\label{sd_proj}
\D^G(\P(V))=\langle\O\otimes\D(Repr(G)),
\O(1)\otimes\D(Repr(G,\LL,\a)),\dots\\
\dots,
\O(n-1)\otimes\D(Repr(G,\LL^{n-1},\a^{n-1}))\rangle.
\end{multline}
Let 
$\~G=G\times_{PGL(V)}GL(V)$ and $\bar G=G\times_{PGL(V)}SL(V)$.
Then the group $\~G$ is the extension of $G$, given by a cocycle 
$(\LL,\a)$ as in proposition~\ref{aboutHH}. The components
$\D(Repr(G,\LL^i,\a^i))$ in decomposition~(\ref{sd_proj})
are equivalent to the categories $\D(Repr_{(i)}(\~G))$ and
 $\D(Repr_{(i)}(\bar G))$.
\end{theorem}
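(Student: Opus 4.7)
The plan is to apply theorem~\ref{th2a} to the Beilinson collection $(\O,\O(1),\dots,\O(n-1))$ on $\P(V)$ and then translate the resulting twisted-representation categories into weight summands of representations of $\~G$ and $\bar G$ via propositions~\ref{aboutHH} and~\ref{aboutHH1}.

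First I would verify the hypotheses of theorem~\ref{th2a}. Beilinson's collection $(\O,\O(1),\dots,\O(n-1))$ is full and exceptional on $\P(V)$, and every $\O(k)$ is invariant in the sense of definition~\ref{def}. Indeed, the construction preceding the theorem statement exhibits $\O(-1)$ as a $\~G$-invariant subsheaf of $\O_{\P(V)}\otimes V$, producing a canonical isomorphism $p_1^*\LL\otimes p_2^*\O(-1)\cong a^*\O(-1)$ that equips $\O(-1)$ with an $(\LL,\a)$-equivariant structure. Applying proposition~\ref{lasheves} to tensor powers and duals, I obtain a canonical $(\LL,\a)^{-k}$-equivariant structure on $\O(k)$ for every $k\in\Z$.

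Now theorem~\ref{th2a}, applied with the twists $(\LL_i,\a_i)=(\LL,\a)^{-i}$ attached to $\EE_i=\O(i)$, yields the semiorthogonal decomposition
$$\D^G(\P(V))=\left\langle\O(i)\otimes\D(Repr(G,\LL_i^*,\a_i^*))\right\rangle_{i=0}^{n-1}=\left\langle\O(i)\otimes\D(Repr(G,\LL^i,\a^i))\right\rangle_{i=0}^{n-1},$$
which is exactly~(\ref{sd_proj}). For the identification of the components I would check that $\~G=G\times_{PGL(V)}GL(V)$ is the central extension of $G$ by $\G_m$ associated with $(\LL,\a)$ in the sense of proposition~\ref{aboutHH}: the principal $\G_m$-bundle $GL(V)\to PGL(V)$ pulls back to the principal $\G_m$-bundle $\~G\to G$ whose associated line bundle is $\LL$ by construction, and the group law on $GL(V)$ induces on $\LL$ precisely the multiplication $\a$. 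Proposition~\ref{aboutHH} then delivers the equivalence $\D(Repr(G,\LL^i,\a^i))\cong\D(Repr_{(i)}(\~G))$. The corresponding assertion for $\bar G$ follows from proposition~\ref{aboutHH1} applied to the $n$-dimensional $(\LL,\a)$-representation $V$ of $G$ coming from the tautological action of $\~G$ on $V$: this representation trivializes $(\LL,\a)^n$ and produces exactly the finite extension by $\mu_n$ described in proposition~\ref{aboutHH1}.

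The proof presents no serious difficulty; the only delicate point is the bookkeeping of signs and dualities so that the exponent of $(\LL,\a)$ attached to $\O(k)$ is indeed $-k$ (and hence the $k$-th block of the decomposition carries twisted representations with cocycle $(\LL,\a)^k$), together with the verification that the fibered-product descriptions of $\~G$ and $\bar G$ coincide with the abstract extensions constructed in propositions~\ref{aboutHH} and~\ref{aboutHH1}. Once those matchings are in place, the theorem is a direct specialization of the general framework developed in the previous section.
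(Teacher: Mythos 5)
Your proposal is correct and follows essentially the same route as the paper: the paper likewise treats the theorem as a direct consequence of theorem~\ref{th2a} applied to the Beilinson collection, with $\O(-1)$ made $(\LL,\a)$-equivariant as a subsheaf of $\O\otimes V$, the twist $(\LL,\a)^{-k}$ on $\O(k)$ obtained from proposition~\ref{lasheves}, and the component identifications supplied by propositions~\ref{aboutHH} and~\ref{aboutHH1} via the fibered products $\~G$ and $\bar G$. Your sign bookkeeping (the $k$-th block carrying $Repr(G,\LL^k,\a^k)$ because theorem~\ref{th2a} dualizes the cocycle) matches the paper's.
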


{\bf Замечание.} It may be interesting to notice that the components
 $\D(Repr_{(0)}(\bar G)),\dots, \D(Repr_{(n-1)}(\bar G))$
of decomposition~(\ref{sd_proj}) are exactly the direct summands of 
the decomposition of $\D(Repr(\bar G))$ into a direct 
sum from proposition~\ref{aboutHH1}.

\medskip
Our decomposition can be viewed as a "noncommutative 
variant" of the semiorthogonal decomposition for relative
Brauer Severi schemes, constructed by M.\,Bernardara in~\cite{Be}.
For a relative Brauer Severi scheme $X\xra{p} S$ of dimension
$n$ over $S$ this decomposition is as follows:
$$\D(X)=\langle\O\otimes p^*\D(S),\O(1)\otimes p^*\D(S,\a^{-1}),
\dots,\O(n)\otimes p^*\D(S,\a^{-n})\rangle.$$
Here $\O(1)$ denotes a relative sheaf $\O_{X/S}(1)$ on $X$.
This sheaf is a twisted sheaf with respect to a cocycle $p^*\a$, where
 $\a\in H^2_{et}(S,\G_m)$ is a certain element in Brauer group.
$\D(S,\a^k)$ denotes a bounded derived category of sheaves on  $S$,
twisted at the cocycle $\a^k$.

We see that decomposition~(\ref{sd_proj}) can be obtained by formal
application of Bernardara's result to the noncommutative
relative Brauer Severi variety $\P(V)/\!\!/G$ over $pt/\!\!/G$.

\subsection{Quadrics.}

Let $\k$ be an algebraically closed field, $char(\k)\ne 2$.
Let $V$ be a vector space over $\k$ of dimension $n, n\ge 3$,
let $Q$ be a nondegenerate quadric in $\P(V)$. Full exceptional
collections in $\D(Q)$ have been constructed by M.\,M.\,Kapranov. 
One of these collections is as follows
\begin{equation}
\label{kapr}
(E_{\pm},\O(-n+3),\dots,\O(-1),\O).
\end{equation}
Here $\O(k)=\O_{\P(V)}(k)|_Q$ denotes the linear bundles,
restricted from $\P(V)$, and $E_{\pm}$ denotes a twisted spinor bundle
$E=\Si(-n+2)$ for odd $n$ and a block of two orthogonal
twisted spinor bundles $E_+=\Si_+(-n+2)$ and $E_-=\Si_-(-n+2)$
for even $n$.
See~\cite{Ka_quad} or~\cite[\S 4]{Ka} for details.

We claim that automorphisms of $Q$ preserve the collection~(\ref{kapr}).
More exactly, automorphisms of $Q$ preserve the bundle $\O(k)$ for any $k$,
send the bundle $E$ into itself (for odd $n$) and send
bundles $E_+$ and $E_-$ into themselves or one into another (for even $n$).

Indeed, take any automorphism $g$ of $Q$. It extends to an automorphism
of the projective space. The sheaves $\O_{\P(V)}(k)$ 
are preserved by automorphisms of $\P(V)$, therefore
for sheaves $\O(-n+3),\dots,\O(-1),\O$ in collection~(\ref{kapr}) we have 
$g^*\O(k)\cong\O(k)$. Further, the subcategory of $\D(Q)$, generated
by $\O(-n+3),\dots,\O(-1),\O$, is $g$-invariant, therefore it's 
right othogonal is also invariant. Since collection~(\ref{kapr})
is full, 
$\langle\O(-n+3),\dots,\O)\rangle^{\perp}=\langle E_{\pm}\rangle$.
If $n$ is odd, the category $\langle E_{\pm}\rangle=\langle E\rangle$
is generated by one exceptional object. All exceptional objects in 
this category are shifts of $E$, so $g^*E\cong E$. 
If $n$ is even, then the category $\langle E_{\pm}\rangle=
\langle E_+,E_-\rangle$ is generated by two exceptional objects
which are orthogonal to each other. All exceptional objects in this category
are shifts either of $E_+$ or of $E_-$.
Therefore, $g^*E_+\cong E_+$ or $E_-$,
$g^*E_+\cong E_-$ or $E_+$.

Now assume that a group $G$ acts on $Q$. The exceptional 
collection~(\ref{kapr})
satisfies hypotheses of theorem~\ref{th3a}, so we can get 
a semiorthogonal decomposition of the category $\D^G(Q)$.

\subsection{Del Pezzo surfaces.}

Let $X$ be a smooth del Pezzo surface of degree $d$
($1\le d \le 9$), and $G$ be a group acting on $X$.
We assume the basic field $\k$ to be algebraically closed 
and to have zero characteristic.
Theorems from previous section provide semiorthogonal decompositions
of derived categories of $G$-sheaves on $X$ in the case $d\ge 5$.

According to a classical result, a smooth del Pezzo surface
is either a result of blowing-up a projective plane in $r=9-d$ 
general points or a smooth quadric (in latter case $d=8$). 
Cases of a projective plane and of a quadric are treated
in previuos sections.
Suppose that $1\le r\le 4$, 
let $X$ be a blow-up of a projective plane in $r$ points, none three of that 
lie on a line. For such $X$ we present a full exceptional collection of 
sheaves, satisfying the hypotheses of theorems~\ref{th3} and~\ref{th3a}.

We will need the following special case of D.\,Orlov's theorem
about blow-ups (see~\cite{Or}):
\begin{theorem}
\label{orlov}
Suppose $\s\colon X\ra \P^2$ is a blow-up of a projective
plane in $r$ distinct points $x_1,\dots,x_r\in \P^2$, and 
$E_i=\s^{-1}(x_i)$ are the exceptional divisors of $\s$.
Then the derived category of coherent sheaves on $X$ has a full
exceptional collection 
\begin{equation}\label{EOOO}
\left(
\begin{matrix}
\O_{E_1}(-1)\\ \vdots\\  \O_{E_r}(-1)
\end{matrix}
\;,
\s^*\O_{\P^2},\s^*\O_{\P^2}(1),
\s^*\O_{\P^2}(2)\right),
\end{equation}
in which the sheaves $\O_{E_1}(-1),\dots,\O_{E_r}(-1)$
form a block (i.e., are orthogonal to each other).
\end{theorem}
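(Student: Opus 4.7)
The plan is to deduce Theorem~\ref{orlov} from D.\,Orlov's general blow-up formula in \cite{Or}. Recall that if $\s\colon \~X\ra X$ is the blow-up of a smooth variety $X$ along a smooth subvariety $Y\subset X$ of codimension $c$, with exceptional divisor $j\colon E=\P(N_{Y/X})\hookrightarrow \~X$ and projection $\pi\colon E\ra Y$, then $\D(\~X)$ admits a semiorthogonal decomposition of the form
$$
\D(\~X)=\langle \Psi_{-(c-1)}(\D(Y)),\dots,\Psi_{-1}(\D(Y)),\,L\s^*\D(X)\rangle,
$$
where each $\Psi_{k}$ is the fully faithful embedding $F\mapsto j_*(\pi^*F\otimes \O_\pi(k))$, and where $\O_\pi(1)$ denotes the relative tautological bundle on $E$.

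In our setting $X=\P^2$, the center $Y=\{x_1,\dots,x_r\}$ is a disjoint union of reduced points, and the codimension is $c=2$, so only the single twist $k=-1$ appears. Since $\D(Y)$ splits as $\bigoplus_{i=1}^r \D(\Spec\k)$, the component $\Psi_{-1}(\D(Y))$ itself decomposes into $r$ pieces, each generated by a single exceptional object $j_*(\O_{E_i}\otimes \O_\pi(-1))=\O_{E_i}(-1)$. On the other side, pull back Beilinson's full exceptional collection $(\O_{\P^2},\O_{\P^2}(1),\O_{\P^2}(2))$ via $L\s^*$. Since $R\s_*\O_{\~X}=\O_{\P^2}$, the projection formula yields $R\Hom_{\~X}(\s^*\O(a),\s^*\O(b))=R\Hom_{\P^2}(\O(a),\O(b))$, so the pulled-back collection remains exceptional and generates $L\s^*\D(\P^2)$. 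Assembling the pieces produces precisely the collection~(\ref{EOOO}).

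It remains to verify that the objects $\O_{E_1}(-1),\dots,\O_{E_r}(-1)$ form a block, i.e.\ are completely pairwise orthogonal. This is immediate: for $i\ne j$ the exceptional divisors $E_i$ and $E_j$ are set-theoretically disjoint, so the sheaves $\O_{E_i}(-1)$ and $\O_{E_j}(-1)$ have disjoint supports, $\HHom(\O_{E_i}(-1),\O_{E_j}(-1))=0$ sheaf-theoretically, and all $\Ext$ groups between them vanish.

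The main obstacle is the appeal to Orlov's theorem, which is the substantive input; the verifications specific to our situation (identifying the embedded objects as $\O_{E_i}(-1)$ and checking orthogonality within the block) are routine. As an alternative to invoking the multi-point case of Orlov's formula directly, one could proceed by induction on $r$, blowing up one point at a time and applying the single-point case of Orlov's theorem at each step, then using the disjointness of the $E_i$ to reorganize the successively produced exceptional sheaves into one common block.
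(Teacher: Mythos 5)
Your derivation is correct and matches the paper's approach: the paper states this result without proof, simply quoting it as a special case of Orlov's blow-up theorem in \cite{Or}, which is exactly the input you invoke. Your specialization to a zero-dimensional center (identifying the embedded objects as $\O_{E_i}(-1)$, checking exceptionality of the pulled-back Beilinson collection via $R\s_*\O_X=\O_{\P^2}$, and deducing the block structure from disjointness of supports) is routine and sound.
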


We will use notation of theorem~\ref{orlov}.
Let also $L_{ij}$ denote a strict transform of the line $x_ix_j$
under the map $\s$, and $L$ denote a pullback of the divisor class
of a line on $\P^2$.
Note that $\s^*\O_{\P^2}(1)=\O_X(L)$ and $L\sim L_{ij}+E_i+E_j$.
Consider the following cases.

\paragraph{Case $r=1$.}
$X$ is a blow-up of $\P^2$ in the point $x_1$.
The $-1$-curve $E_1$ is unique on $X$, so 
the action of $G$ on $X$ comes from the action of $G$ on
the plane, leaving the point $x_1$ invariant. 
All sheaves of the exceptional collection 
$(\O_{E_1}(-1),\s^*\O_{\P^2},\s^*\O_{\P^2}(1),\s^*\O_{\P^2}(2))$
are preserved by the group, so theorem~\ref{th2a} is applicable here.
Note that $\O_{E_1}(-1)$ is a (usual) $G$-sheaf on $X$, provided by the 
linear action of $G$ on the tangent space $T_{x_1}\P^2$. So 
theorem~\ref{th1a} is also applicable.

In fact, we obtain a semiorthogonal decomposition
$$\D^G(X)=\langle\O_{E_1}(-1)\otimes \D(Repr(G)),\s^*\D^G(\P^2)\rangle.$$

\paragraph{Case $r=2$.} $X$ is a plane with two blown-up points
$x_1$ and $x_2$. There are exactly three $-1$-curves on $X$: 
$E_1, E_2$ and $L_{12}$.
All automorphisms of the surface send a graph of exceptional curves into
itself. Hence all automorphisms of $X$ come from automorphisms of
$\P^2$ preserving the set $\{x_1,x_2\}$.  Thus,
the sheaves
$\s^*\O_{\P^2},\s^*\O_{\P^2}(1),\s^*\O_{\P^2}(2)$ in the 
collection~(\ref{EOOO})
are preserved by the group while the sheaves 
$\O_{E_1}(-1)$ and $\O_{E_2}(-1)$ are preserved or sent into each other. 
Applying theorem~\ref{th3a}, we get a semiorthogonal decomposition
of the category $\D^G(X)$:
$$\D^G(X)=\left\langle
\begin{matrix} \O_{E_1}(-1)\otimes \D(Repr(G))\\
\O_{E_2}(-1)\otimes \D(Repr(G))\end{matrix}\;,\s^*\D^G(\P^2)\right\rangle$$
if the group preserves both points $x_1,x_2\in \P^2$, or
$$\D^G(X)=\left\langle  \Ind_H^G(\O_{E_1}(-1)\otimes \D(Repr(H))),
\s^*\D^G(\P^2)\right\rangle$$
if the group permutes these points.

\paragraph{Case $r=3$.} $X$ is a blow-up of a plane in three points 
$x_1, x_2, x_3$, not lying on a line.

This case is essentially different from the two above cases.
Namely, not all automorphisms of $X$ come from automorphisms of 
the plane and collection~(\ref{EOOO}) is not invariant under 
arbitrary group acting on $X$. But it is possible to obtain an invariant
collection from~(\ref{EOOO}) by mutations.
Consider a collection, consisting of three blocks of linear bundles on $X$:
\begin{equation}
\label{kn3}
\left(\O,\begin{matrix}
\O(L)\\
\O(2L-E_1-E_2-E_3)
\end{matrix},
\begin{matrix}
 \O(2L-E_1-E_2) \\
 \O(2L-E_1-E_3) \\
 \O(2L-E_2-E_3)
\end{matrix}
\right).
\end{equation}
It is a full exceptional collection, it can be obtained from 
collection~(\ref{EOOO}) by mutations.
See the work by 
B.\,Karpov and D.\,Nogin \cite[section 4]{KN} for this and other facts about 
three-block exceptional collections on del Pezzo surfaces.

\begin{lemma}
Blocks $(\O(L),\O(2L-E_1-E_2-E_3))$ and
$(\O(2L-E_1-E_2),\O(2L-E_1-E_3),\O(2L-E_2-E_3))$ are preserved by 
automorphisms of a surface $X$.
\end{lemma}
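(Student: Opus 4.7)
The plan is to work entirely in $\Pic X$, which is free of rank $4$ with basis $L,E_1,E_2,E_3$. Any automorphism $\phi\colon X\ra X$ induces an isometry $\phi^*$ of $\Pic X$ fixing the canonical class $K_X=-3L+E_1+E_2+E_3$. Since a line bundle on $X$ is determined up to isomorphism by its class in $\Pic X$, the lemma reduces to the combinatorial statement that the two finite sets of divisor classes corresponding to the blocks are stable under every such $\phi^*$.

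First I would list the $(-1)$-curves on $X$: they are numerically characterized by $C^2=-1$ and $C\cdot K_X=-1$, and on a blow-up of $\P^2$ at three non-collinear points they are represented precisely by the six curves $E_1,E_2,E_3,L_{12},L_{13},L_{23}$, where $L_{ij}=L-E_i-E_j$. A direct intersection calculation shows that these six curves form a hexagon (with edges given by $E_i\cdot L_{ij}=E_j\cdot L_{ij}=1$), in which the two triples $\{E_1,E_2,E_3\}$ and $\{L_{12},L_{13},L_{23}\}$ of pairwise disjoint $(-1)$-curves are the two independent $3$-sets of the bipartition. Since $\phi$ sends $(-1)$-curves to $(-1)$-curves and $\phi^*$ preserves the intersection form, the induced permutation of the six classes is a symmetry of this hexagon, hence either preserves each of the two triples setwise or interchanges them.

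Next I would rewrite the blocks in a form adapted to this dichotomy. Using $-K_X=3L-E_1-E_2-E_3$, the first block is $\{L,\,-K_X-L\}$. For the second, the identity $L+L_{ij}=2L-E_i-E_j$ shows that the three classes may be written equivalently as $(-K_X-L)+E_k$ with $k\in\{1,2,3\}$, or as $L+L_{ij}$ with $1\le i<j\le 3$.

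The case analysis is then immediate. If $\phi^*$ preserves $\{E_1,E_2,E_3\}$ setwise, then $\phi^*(E_1+E_2+E_3)=E_1+E_2+E_3$; combined with $\phi^*K_X=K_X$ and $3L=-K_X+(E_1+E_2+E_3)$ this forces $\phi^*L=L$, so the first block is fixed pointwise and the second is permuted via the induced permutation of the $E_k$. If instead $\phi^*$ swaps the two triples, then $\phi^*(E_1+E_2+E_3)=L_{12}+L_{13}+L_{23}=3L-2(E_1+E_2+E_3)$, whence the same identity gives $\phi^*L=2L-E_1-E_2-E_3=-K_X-L$; so $\phi^*$ swaps the two elements of the first block, and sends $(-K_X-L)+E_k$ to $L+\phi^*(E_k)=L+L_{ij}$, still an element of the second block. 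The step that needs the most care, though it is routine rather than conceptually hard, is the numerical identification of the $(-1)$-curves with exactly the six classes above and verifying the hexagon structure; once that is in place the rest is a short computation in the Picard lattice.
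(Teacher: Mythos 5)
Your argument is correct. It shares its skeleton with the paper's proof: both identify the six $(-1)$-curves, observe that they form a hexagon whose two alternating (equivalently, pairwise disjoint) triples are $\{E_1,E_2,E_3\}$ and $\{L_{12},L_{13},L_{23}\}$, and split into the two cases according to whether an automorphism preserves or swaps these triples. The difference is in how the swap case is handled. The paper reduces it to a single explicit automorphism $f$ (the Cremona involution based at $x_1,x_2,x_3$, realized as the ``central symmetry'' of the hexagon) and computes $f^*$ on the relevant classes; this implicitly uses that any triple-swapping automorphism is the composite of $f$ with a triple-preserving one, and it leans on the geometric existence of $f$ as an honest automorphism of $X$. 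You instead work with an arbitrary isometry $\phi^*$ of $\Pic X$ fixing $K_X$ and derive $\phi^*L=L$ or $\phi^*L=-K_X-L$ directly from $3L=-K_X+(E_1+E_2+E_3)$, then check the blocks via the rewriting $2L-E_i-E_j=(-K_X-L)+E_k=L+L_{ij}$. Your route buys a purely lattice-theoretic proof that needs no existence statement and no descent to $\P^2$ even in the first case (the paper invokes ``comes from an automorphism of the plane'' to get $g^*L=L$); the paper's route buys a concrete geometric picture of the nontrivial symmetry. Both are complete; just make sure to actually carry out the routine verification you flag, namely that the six listed classes exhaust the $(-1)$-classes on a degree~$6$ del Pezzo surface and that their intersection graph is the hexagon in the stated cyclic order.
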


\begin{proof}
There are exactly six $-1$-curves on $X$. They form a circle 
in the following order: $E_1,L_{12},E_2,L_{23},E_3,L_{13}$. 
This circle has to be invariant under automorphisms of $X$.  
If an automorphism $g$ of $X$ keeps the set of curves $\{E_1,E_2,E_3\}$, 
then it comes from an automorphism of $\P^2$ and sends $L$ into $L$. 
Therefore $g$ leaves invariant each bundle in the block
$(\O(L),\O(2L-E_1-E_2-E_3))$ and permutes bundles in the block
$(\O(2L-E_1-E_2),\O(2L-E_1-E_3),\O(2L-E_2-E_3))$.
To finish the proof it suffices to consider an automorphism $f$
of $X$ switching the two sets of curves
$\{E_1,E_2,E_3\}$ and
$\{L_{12},L_{13},L_{23}\}$. For instance, take as $f$ a
"central symmetry" on a circle of $-1$-curves
(that is, take $f$ such that 
$f(E_1)=L_{23}$,
$f(E_2)=L_{13}$ and so on) 
Such $f$ can be realized as an involution of $X$,
induced by a (?) quadratic transformation of 
$\P^2$ whose centers are $x_1,x_2,x_3$.
Note that $f(L)=2L-E_1-E_2-E_3$ and $f(2L-E_1-E_2-E_3)=L$,
so sheaves from the block 
$(\O(L),\O(2L-E_1-E_2-E_3))$ are permuted by the automorphism $f$.
On the other side, the calculation gives
$$f(2L-E_1-E_2)=2(2L-E_1-E_2-E_3)-(L-E_2-E_3)-(L-E_1-E_3)=2L-E_1-E_2.$$
Therefore (by symmetry), all linear bundles in the block 
$(\O(2L-E_1-E_2),\O(2L-E_1-E_3),\O(2L-E_2-E_3))$ are invariant under $f$.
\end{proof}

We have checked that conditions of theorems~\ref{th3} and \ref{th3a} 
are hold for collection~(\ref{kn3}), so we obtain semiorthogonal
decompositions of equivariant derived categories. 

\paragraph{Case $r=4$.} $X$ is a blow-up of a plane in 
four points, none three of that lie on a line.
Let $K_X$ denote a canonical divisor class on $X$, $K_X=-3L+E_1+E_2+E_3+E_4$.
We will use the following full exceptional collection of sheaves 
on $X$ from Karpov and Nogin's work~\cite[section 4]{KN}:
\begin{equation}
\label{kn4}
\left(
\begin{array}{ccc}
&& \O(L) \\
&& \O(E_1-K_X-L) \\
\O&F&  \O(E_2-K_X-L)\\
&& \O(E_3-K_X-L)\\
&& \O(E_4-K_X-L)
\end{array}
\right).
\end{equation}
Here $F$ is a vector bundle of rank $2$, it can be described by an extension
$$0\ra \O(-K_X-L)\ra F\ra \O(L)\ra 0.$$

We will check below that automorphisms of $X$ send sheaves from the 
right-hand side block of~(\ref{kn4}) into themselves.
Since collection~(\ref{kn4}) is full, this would imply that the bundle $F$ is 
invariant under automorphisms of $X$.

\begin{lemma}
Automorphisms of $X$ permute the bundles 
$$\O(L),\O(E_1-K_X-L), \O(E_2-K_X-L),\O(E_3-K_X-L), \O(E_4-K_X-L).$$
\end{lemma}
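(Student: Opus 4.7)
The plan is to characterize the five listed line bundles by a pair of numerical invariants that any automorphism must preserve, and then to classify all divisor classes with those invariants. Concretely, I will show that a line bundle $\O(D)$ on $X$ lies in the given set if and only if the divisor class $D$ satisfies $D^2=1$ and $D\cdot K_X=-3$. Any automorphism $g$ of $X$ induces a $\Z$-linear map $g^*\colon\Pic(X)\to\Pic(X)$ that preserves the intersection pairing and fixes $K_X$ (the canonical class being intrinsically defined), so $g^*$ automatically permutes the set of classes with these fixed numerical invariants.

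First one checks directly that every listed bundle has $D^2=1$ and $D\cdot K_X=-3$; for instance $E_i-K_X-L=2L-\sum_{j\neq i}E_j$ squares to $4-3=1$ and pairs with $K_X=-3L+\sum E_j$ to give $-6+3=-3$, and of course $L^2=1$, $L\cdot K_X=-3$. The harder half is the converse: writing a general class as $D=aL-\sum_{i=1}^{4}b_iE_i$, the equations $D^2=1$ and $D\cdot K_X=-3$ become
\begin{equation*}
a^2-\sum b_i^2=1\qquad\text{and}\qquad 3a-\sum b_i=3.
\end{equation*}
By the Cauchy--Schwarz inequality $(\sum b_i)^2\le 4\sum b_i^2$, these two relations imply $(3a-3)^2\le 4(a^2-1)$, i.e.\ $5a^2-18a+13\le 0$, which forces $a\in\{1,2\}$. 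A quick enumeration then isolates the five classes: for $a=1$ all $b_i$ vanish, giving $D=L$; for $a=2$ one needs $\sum b_i=3$ and $\sum b_i^2=3$, so three of the $b_i$ equal $1$ and one equals $0$, yielding the four conic classes $2L-\sum_{j\neq i}E_j=E_i-K_X-L$.

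Putting these pieces together, $g^*$ permutes the five divisor classes, and since $g^*\O(D)=\O(g^*D)$, this gives the required permutation of the five line bundles. The main potential obstacle is the diophantine classification, but the Cauchy--Schwarz bound cuts the problem down to two values of $a$ immediately; after that the argument is purely formal and does not require any case analysis on individual automorphisms of $X$.
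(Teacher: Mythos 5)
Your proof is correct, and it takes a genuinely different route from the paper's. The paper argues geometrically: it lists the ten $-1$-curves on $X$, observes that they generate $\Pic(X)$ so that any automorphism is controlled by the induced automorphism of the incidence graph of $-1$-curves, asserts that this graph automorphism group is generated by the permutations preserving $\{E_1,\dots,E_4\}$ together with one explicit involution $f$ (realized by a quadratic Cremona transformation centered at three of the points), and then computes the action of $f$ on each of the five classes by hand. Your argument instead characterizes the five classes purely numerically, as the solutions of $D^2=1$, $D\cdot K_X=-3$ in $\Pic(X)$, and invokes only the facts that $g^*$ is an isometry of the intersection lattice fixing $K_X$; the Cauchy--Schwarz bound $(\sum b_i)^2\le 4\sum b_i^2$ reduces the Diophantine classification to $a\in\{1,2\}$, and the enumeration checks out (for $a=2$ the conditions $\sum b_i=\sum b_i^2=3$ force three coefficients equal to $1$ and one equal to $0$, recovering $E_i-K_X-L=2L-\sum_{j\ne i}E_j$). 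Your route is shorter and avoids the one step the paper leaves unjustified, namely that the stated elements generate the full graph automorphism group; it also needs no geometric realization of $f$. What the paper's computation buys in exchange is explicit knowledge of \emph{how} a given automorphism permutes the block (e.g.\ that $f$ swaps $\O(L)$ with $\O(E_4-K_X-L)$ and fixes the other three), which is the information one actually needs to identify the stabilizer subgroups $H_i$ and the coinduced sheaves when applying theorem~\ref{th3a}; the same explicit calculations are also reused from the $r=3$ case. Your argument establishes the permutation statement but, being purely numerical, does not by itself describe the permutation.
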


\begin{proof}
There are exactly ten $-1$-curves on $X$, namely four $E_i$'s and 
six $L_{ij}$'s.
Since this ten divisors generate the Picard group of $X$, 
it suffices to prove that automorphisms of the graph of $-1$-curves
send bundles from the right-hand size block in~(\ref{kn4})
into each other.
First let us note that the above is true for automorphisms, preserving
the set $\{E_1,E_2,E_3,E_4\}$. Then, let $f$ be an involution,
preserving $E_4$ and
equal to a central symmetry on the circle of six $-1$-curves that
do not intersect $E_4$.
One can easily see that automorphisms, leaving the set $\{E_1,E_2,E_3,E_4\}$
invariant, together with $f$ generate the group of all automorphisms
of the graph.
Hence we need only to check that $f$ leaves the block invariant.
All necessary calculations were done in previous paragraph.
We have:
\begin{gather*}
f(L)=2L-E_1-E_2-E_3=E_4-K_X-L, \\
f(E_4-K_X-L)=L\quad\text{because $f$ is of second order,}\\
f(E_1-K_X-L)=f(2L-E_2-E_3)-E_4=2L-E_2-E_3-E_4=E_1-K_X-L,\\
\text{and similarly}\\
f(E_2-K_X-L)=E_2-K_X-L,\\
f(E_3-K_X-L)=E_3-K_X-L.
\end{gather*}
\end{proof}

We see that collection~(\ref{kn4}) satisfyes conditions of 
theorems~\ref{th3} and~\ref{th3a}.

\subsection{Grassmanians.}                             
Let $V$ be a vector space over a field $\k$ of characteristic $0$, 
$dim(V)=n$,
let $\Gr=\Gr(k,V)$ denote a grassmanian of 
$k$-dimensional subspaces in $V$.

To apply theorems~\ref{th2} and \ref{th2a}, we will need 
an exceptional collection on $\Gr$ that is decribed below.
This collection was constructed by M.\,Kapranov
in~\cite{Ka_grass}, see also~\cite[\S 3]{Ka}. Introduce some notations.
Let $\lam=(\lam_1,\dots,\lam_r)$ be a Young diagram,
whose $r$ rows have lengths $\lam_1,\dots,\lam_r$ 
($\lam_1\ge\lam_2\ge\dots\ge\lam_r$ here denote positive integers).
The total number of cells in $\lam$ we denote by 
$|\lam|=\sum_i \lam_i$. 
Every Young diagram $\lam$ defines a tensor operation on vector spaces
that sends a vector space $V$ into the space denoted by 
$\Sigma^{\lam}V$.  Namely, for a diagram
$\lam$, consisting of $r\le dim(V)$ rows, one can
define $\Si^{\lam}V$ as a space of an irreducible representation 
of the group $GL(V)$ with the highest weight $\lam$ (if necessary, 
zero rows are added to $\lam$). Otherwise $\Si^{\lam}V=0$. 
This space can be described explicitly as some quotient of
natural representation of $GL(V)$ in 
$V^{\otimes|\lam|}$ (see~\cite[chapter 8]{Fu}).
Hence, the correspondence $\Si^{\lam}$
is a covariant functor of $V$ and $\Si^{\lam}$
defines an operation on vector bundles.

Suppose $S$ is a tautological vector bundle on $\Gr=\Gr(k,V)$.
Then, according to Kapranov,
the category $\D(\Gr)$ admits a semiorthogonal decomposition
\begin{equation}
\label{kapr2}
\D(\Gr)=\langle\D_{k(n-k)},\D_{k(n-k)-1},\dots,\D_1,\D_0\rangle.
\end{equation}
The subcategory $\D_i$ is generated by pairwise orthogonal vector
bundles $\Si^{\lam}S$, where $\lam$ runs Young diagrams with $i$
cells having $\le k$ rows and 
$\le n-k$ columns.

Lets see how many automorphisms do grassmanians have.
For $1\le k\le n-1, k\ne n/2$
the canonical map $PGL(V)\ra Aut(\Gr(k,V))$ is an isomorphism, 
but for even $n$ and $k=n/2$ it is not: $PGL(V)$ is a subgroup
in $Aut(\Gr(k,V))$ of index $2$. The following map is an example of
an automorphism of $\Gr(n/2,V)$ not coming from $PGL(V)$:
the map $U\mapsto U^{\perp}$,
where $\perp$ means orthogonal complement with respect to some
nondegenerate quadratic form  on $V$.

Below we'll consider the case of a group action induced by a homomorphism
$G\ra PGL(V)$.
In this case the bundle $S$ and all bundles 
$\Si^{\lam}S$ are preserved by the action and the exceptional
collection~(\ref{kapr2}) satisfies terms of theorems~\ref{th2}~and~\ref{th2a}.

Suppose $(\LL,\a)$ is a cocycle on the group $G$, 
such that there is an $(\LL,\a)$-representation of
$G$ in the space $V$ (and $\O_{\P(V)}(-1)$ is an $(\LL,\a)$-equivariant
subsheaf in $\O_{\P(V)}\otimes V$). 
As in section~\ref{proj},
the cocycle $(\LL,\a)$ corresponds to an extension 
$\~G=G\times_{PGL(V)}GL(V)$
of the group $G$.
Note that $S$ is a $G$-subbundle in $\O_{\Gr}\otimes V$,
therefore $S$ is an $(\LL,\a)$-equivariant bundle,
and $\Si^{\lam}S$ is an $(\LL,\a)^{|\lam|}$-bundle.
Applying theorem~\ref{th2a} and proposition~\ref{aboutHH},
we get the following 
\begin{theorem}
In the above notation the derived equivariant category
$\D^G(\Gr)$ admits a semiorthogonal decomposition
$$\D^G(\Gr)=\langle\D'_{k(n-k)},\D'_{k(n-k)-1},\dots,\D'_1,\D'_0\rangle.$$
Here $\D'_i= \D_i\otimes \D(Repr_{(-i)}(\~G))$ 
denotes a set of pairwise orthogonal subcategories 
$\Si^{\lam}S\otimes  \D(Repr_{(-i)}(\~G))$, where $\lam$ runs
all Young diagrams with $i$ cells having  $\le k$ rows and $\le n-k$ columns.
\end{theorem}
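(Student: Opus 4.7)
The strategy is to apply Theorem~\ref{th2a} to Kapranov's exceptional collection~(\ref{kapr2}), suitably linearized, and then to use Proposition~\ref{aboutHH} to reinterpret the twisted representation categories that appear as categories of weight representations of~$\tilde G$.

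First I would check that every bundle $\Si^{\lam}S$ entering~(\ref{kapr2}) is invariant under $G$ in the sense of Definition~\ref{def}, with cocycle $(\LL,\a)^{|\lam|}$. By hypothesis $V$ is an $(\LL,\a)$-representation of $G$, so $\O_{\Gr}\otimes V$ is an $(\LL,\a)$-equivariant bundle; the tautological subbundle $S\subset \O_{\Gr}\otimes V$ is preserved by $\tilde G$, since $\tilde G$ acts on $V$ through $GL(V)$ and hence sends $k$-planes to $k$-planes. Thus $S$ inherits an $(\LL,\a)$-equivariant structure as an invariant subsheaf, and Proposition~\ref{lasheves}, together with the realization of $\Si^{\lam}$ as a subquotient of $V^{\otimes|\lam|}$, produces the claimed $(\LL,\a)^{|\lam|}$-equivariant structure on $\Si^{\lam}S$. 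In terms of the extension~$\tilde G$ this corresponds, via Proposition~\ref{aboutHH}, to the fact that the central $\G_m\subset \tilde G$ acts with weight $|\lam|$ on $\Si^{\lam}S$.

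Next I would apply Theorem~\ref{th2a} to the full exceptional collection of~(\ref{kapr2}), enumerated in any total order refining the block order. It immediately produces a semiorthogonal decomposition of $\D^G(\Gr)$ whose components are
\[
\Si^{\lam}S\otimes \D(Repr(G,(\LL^{|\lam|})^*,(\a^{|\lam|})^*))
= \Si^{\lam}S\otimes \D(Repr(G,\LL^{-|\lam|},\a^{-|\lam|})).
\]
Proposition~\ref{aboutHH} applied with $r=-|\lam|$ then identifies $\D(Repr(G,\LL^{-|\lam|},\a^{-|\lam|}))$ with $\D(Repr_{(-|\lam|)}(\tilde G))$, so each component becomes $\Si^{\lam}S\otimes\D(Repr_{(-|\lam|)}(\tilde G))$.

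Finally, I would regroup these components by $i=|\lam|$ to recover the block $\D'_i$. Within a Kapranov block the sheaves $\Si^{\lam}S$ with fixed $|\lam|=i$ are mutually orthogonal in both directions in $\D(\Gr)$, i.e.\ $R\Hom(\Si^{\lam}S,\Si^{\lam'}S)=0$ for $\lam\ne\lam'$ with $|\lam|=|\lam'|=i$; using the adjunction $\Hom_G(\FF\otimes V,\GG\otimes W)=\Hom_G(V,R\Hom(\FF,\GG)\otimes W)$ this bi-directional orthogonality propagates to the corresponding equivariant subcategories, which therefore amalgamate into a single block equal to $\D_i\otimes \D(Repr_{(-i)}(\tilde G))=\D'_i$. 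The only subtle step is the very first one: pinning down the exact twist $(\LL,\a)^{|\lam|}$ on $\Si^{\lam}S$; once this is in hand, the remaining arguments are formal consequences of Theorem~\ref{th2a} and Proposition~\ref{aboutHH}.
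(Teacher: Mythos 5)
Your proposal is correct and follows essentially the same route as the paper, which simply observes that $S\subset\O_{\Gr}\otimes V$ inherits an $(\LL,\a)$-equivariant structure, hence $\Si^{\lam}S$ is $(\LL,\a)^{|\lam|}$-equivariant, and then invokes Theorem~\ref{th2a} together with Proposition~\ref{aboutHH}. You merely spell out the details (the identification of the twists, the passage to weight-$(-|\lam|)$ representations of $\~G$, and the regrouping into blocks via the pairwise orthogonality of the $\Si^{\lam}S$ with fixed $|\lam|$) that the paper leaves implicit.
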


\newpage

\end{document}